\theoremstyle{plain}
\newtheorem{thm}{Theorem}[section] 
\theoremstyle{plain}
\newtheorem{lem}[thm]{Lemma}
\theoremstyle{plain}
\newtheorem{prop}[thm]{Proposition}
\theoremstyle{plain}
\newtheorem{cor}[thm]{Corollary}
\theoremstyle{plain}
\newtheorem{conj}[thm]{Conjecture}
\theoremstyle{remark}
\newtheorem*{rem}{Remark}
\theoremstyle{remark}
\theoremstyle{plain}
\newtheorem{question}{Question}
\theoremstyle{definition}
\newtheorem{defn}[thm]{Definition} 
\newtheorem{exmp}[thm]{Example} 
\def\cs{\mathop{\#}}
\newcommand{\spinc}{\ifmmode{{\mathfrak s}}\else{${\mathfrak s}$\ }\fi}
\newcommand{\spinct}{\ifmmode{{\mathfrak t}}\else{${\mathfrak t}$\ }\fi}
\newcommand{\spincw}{\ifmmode{{\mathfrak w}}\else{${\mathfrak w}$\ }\fi}
\def\R{\mathbb R}
\def\t{\textup{t}} 
\def \xra{\xrightarrow}
\def\U{\Upsilon}
\def\s{\mathfrak s}
\def\t{\mathfrak t}
\begin{document}


\title[Nonorientable surfaces bounded by knots]{Nonorientable surfaces bounded by knots: a geography problem}
\author{Samantha Allen}
\address{Samantha Allen: Department of Mathematics, Dartmouth College, Hanover, NH 03755 }
\email{samantha.g.allen@dartmouth.edu}

\begin{abstract}
The nonorientable 4--genus is an invariant of knots which has been studied by many authors, including Gilmer and Livingston, Batson, and Ozsv\'{a}th, Stipsicz, and Szab\'{o}.  Given a nonorientable surface $F \subset B^4$ with $\partial F = K\subset S^3$ a knot, an analysis of the existing methods for bounding and computing the nonorientable 4--genus reveals relationships between the first Betti number $\beta_1$ of $F$ and the normal Euler class $e$ of $F$.  This relationship yields a geography problem: given a knot $K$, what is the set of realizable pairs $(e(F), \beta_1(F))$ where $F\subset B^4$ is a nonorientable surface bounded by $K$?  We explore this problem for families of torus knots.  In addition, we use the Ozsv\'ath-Szab\'o $d$--invariant of two-fold branched covers to give finer information on the geography problem.  We present an infinite family of knots where this information provides an improvement upon the bound given by Ozsv\'ath, Stipsicz, and Szab\'o using the Upsilon invariant.
\end{abstract}

\maketitle
\section{Introduction}

One measure of the complexity of a knot in $S^3$ is the genus of the simplest surface that it bounds.  For instance, restricting to smooth, orientable surfaces in $B^4$ results in the 4--genus of the knot.  A similar variation is to consider smooth nonorientable surfaces in $B^4$ that are bounded by the knot---this yields the nonorientable 4--genus of the knot.
\begin{defn}
Let $F$ be a connected, nonorientable surface in $B^4$ with nonempty connected boundary $\partial F \subset S^3$.  Let $h(F)=\beta_1(F)= \text{ dim } H_1(F,\mathbb{Q})$ be the first Betti number of $F$ (also called the \textit{nonorientable genus of F}).  Then the \textit{nonorientable 4--genus of a knot K} is 
\begin{equation*}
    \gamma_4(K) = \min \left\{
    h(F) \middle| \begin{array}{c} \partial F = K \text{ and } F \text{ is a smoothly embedded,} \\ \text{nonorientable surface in } B^4\end{array}\right\}.
\end{equation*}
\end{defn}
\noindent Note that $\gamma_4(K)>0$, since any nonorientable surface with one boundary component has positive first Betti number.  (Some authors choose to let $\gamma_4(K) = 0$ in the case of a slice knot $K$.)

The nonorientable 4--genus is an invariant of knots which has been studied by many authors, including Viro \cite{viro}, Yasuhara \cite{yasuhara}, Gilmer and Livingston \cite{gilmer-livingston}, Batson \cite{batson}, and Ozsv\'{a}th, Stipsicz, and Szab\'{o} \cite{oss2}.  Each of these works offers bounds on $\gamma_4$ and obstructions for the existence of nonorientable surfaces in $B^4$ with boundary a given knot.

An analysis of the existing methods for bounding and computing the nonorientable 4--genus of a knot reveals that many of these methods depend on the Euler class $e$ of the normal bundle of the spanning surface, also called the \textit{normal Euler number} of the surface.  The author aims to further study this dependence.  We ask several questions, primarily:
\begin{question} \label{question 1}
Given a knot $K\subset S^3$, what is the set of realizable pairs \[(e(F), h(F)) = \text{(normal Euler number of a surface $F$, nonorientable genus of $F$)},\] where $F\subset B^4$ is bounded by $K$?
\end{question}
\noindent  For a given knot, we can plot these pairs in the $(e,h)$--plane and consider the region of realizable points.  This region (as in Figure \ref{wedge schematic}) is always a union of sets of the form 
$\left\{(e,h) \;:\; \left|a- \frac{e}{2}\right| \leq h\right\}$
for some value of $a$, ignoring some issues of parity.  This leads to the following question:
\begin{figure}[ht]
\includegraphics[width = 0.4\textwidth]{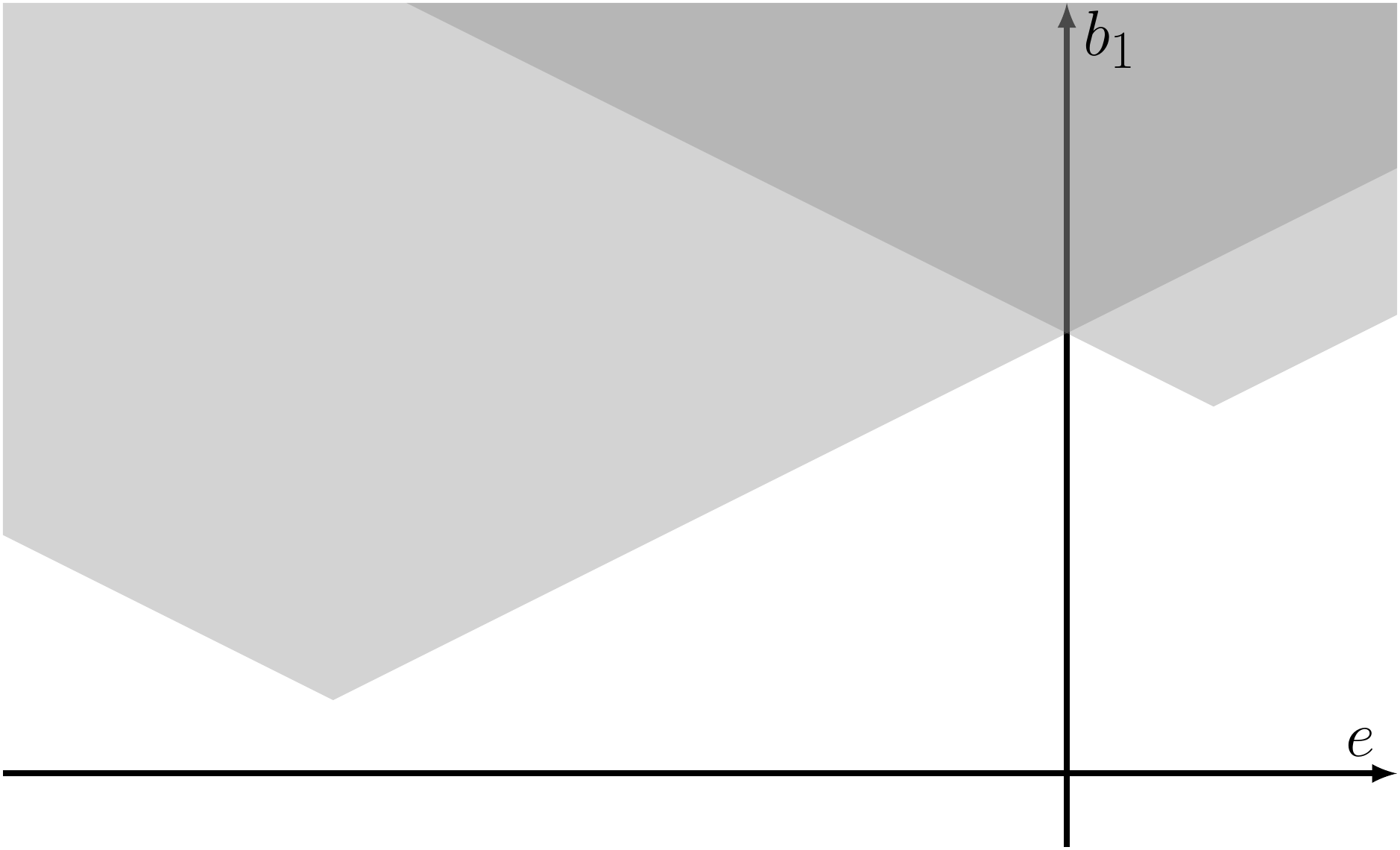}
\caption{}
\label{wedge schematic}
\end{figure}

\begin{question}
What shapes of regions are achievable?  How many global/local minima can occur?
\end{question}

For the remainder of this paper we will denote pairs $$\text{(normal Euler number of a surface $F$, nonorientable genus of $F$)}$$ by $(e,h)$.
We prove the following results.
\begin{thm}
For $T(2,n)$ with $n$ odd, the following pairs are realizable: $$(e,h) \in \{(-2n \pm 2m, 1+m+2l)\; |\; m,l\geq 0\} \cup \{(2+2m, n+m)\; |\; m\geq0\},$$
it is unknown if the following pairs are realizable:
        \begin{enumerate}
            \item if $n\equiv 1$ (mod 4),  $(e,h) = (4-2n+2m, 1+m)$ for $0\leq m\leq n-1$,
            \item if $n\equiv 3$ (mod 4),  $(e,h) = (8-2n+2m, 3+m)$ for $0\leq m\leq n-3$,
        \end{enumerate}
and all other pairs are not realizable.
\label{T(2,n)}
\end{thm}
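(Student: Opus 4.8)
The plan is to prove the realizable list by explicit construction, the non-realizable complement by combining the known obstructions, and to organize both directions around two base surfaces together with a single crosscap-stabilization move. For the realizable list I would begin from two spanning surfaces for $K = T(2,n)$. First, since $n$ is odd the $n$--half-twisted band is a M\"obius band $M \subset S^3 \subset B^4$ with $\partial M = K$; a normal-framing computation gives $(e(M), h(M)) = (-2n, 1)$ in the conventions fixed above (for $n \ge 3$ the signature obstruction below shows $T(2,n)$ bounds no M\"obius band of nonnegative normal Euler number, so only the ``$-2n$'' wedge occurs). Second, a minimal-genus Seifert surface $S$ has $h(S) = 2g_3(T(2,n)) = n-1$ and, pushed into $B^4$, normal Euler number $0$; the internal connected sum of pairs $(B^4, S)\,\#\,(S^4, \mathbb{RP}^2)$ with a standardly embedded projective plane of normal Euler number $+2$ realizes $(2, n)$. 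The only move required is crosscap stabilization: internally summing any realizing surface with a standard $\mathbb{RP}^2 \subset S^4$ of normal Euler number $\pm 2$ sends $(e, h) \mapsto (e \pm 2, h + 1)$. Performing $j$ of the $+2$ stabilizations and $k$ of the $-2$ stabilizations on $M$ yields $(-2n + 2(j - k),\ 1 + j + k)$, which with $m = |j - k|$ and $l = \min(j, k)$ is exactly the family $\{(-2n \pm 2m,\ 1 + m + 2l)\}$; the same procedure applied to the surface realizing $(2, n)$ sweeps out $\{(2 + 2m, n + m)\}$, its ``downward'' stabilizations landing inside the first family. This establishes the realizability assertion.

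For the complement I would combine three obstructions, each evaluated for $T(2,n)$. First, a congruence constraint on spanning surfaces in $B^4$, $e(F) \equiv 2h(F) \pmod 4$, which confines $(e, h)$ to a sublattice and already disposes of half of the remaining lattice points. Second, the Gordon--Litherland / signature bound, which for a surface in $B^4$ takes the wedge form $h(F) \ge |\sigma(K) - e(F)/2|$ after normalization, with $\sigma(T(2,n)) = 1 - n$; its lower edge is precisely the line $h = e/2 + n - 1$ carrying the realizable family $\{(2+2m, n+m)\}$ and the ``unknown'' pairs. Third --- the essential input --- Batson's inequality for the double branched cover $\Sigma_2(T(2,n)) = L(n,1)$: the cover $W = \Sigma_2(B^4, F)$ has $b_1(W) = 0$, $b_2(W) = h(F)$, and signature controlled by $\sigma(K)$ and $e(F)$, so the Ozsv\'ath--Szab\'o $d$--invariant inequality for $4$--manifolds bounding $L(n,1)$ yields a second wedge, whose location and sharpness are governed by the explicit lens-space values $d(L(n,1), \mathfrak{s}) = \pm\tfrac{(2i - n)^2 - n}{4n}$ $(i = 0, \dots, n-1)$ and by which $\mathfrak{s}$ is induced by $F$ --- a linking-form calculation in $\Sigma_2$ keyed to $e(F)$ modulo $2n$. (The Ozsv\'ath--Stipsicz--Szab\'o $\Upsilon$--bound gives a comparable wedge; for $T(2,n)$ the $d$--invariant is at least as strong, which is the finer information advertised in the introduction.) Taking the union of these wedges inside the congruence sublattice and matching the complement to the claimed non-realizable set completes this direction.

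The hard part is the third ingredient near the vertices of the two wedges. One must show the $d$--invariant bound reproduces the first-wedge edge $h \ge 1 + |e/2 + n|$ exactly --- neither weaker, or the first family would fail to be optimal, nor stronger, or further pairs would be forced non-realizable --- and that it gives no improvement over the signature bound for $e \ge 2$, so that the second family is optimal as well; the gap between the two bounds along the inner edge $h = e/2 + n - 1$ is what leaves an unresolved strip. The residue of $n$ modulo $4$ enters through the arithmetic of $d(L(n,1), \mathfrak{s}) = \pm\tfrac{(2i - n)^2 - n}{4n}$: it controls which spin$^c$ structure $\mathfrak{s}$ a hypothetical $h = 1$ surface with $e = 4 - 2n$ induces on $\Sigma_2(T(2,n))$ and the exact value taken there, hence whether Batson's inequality is violated --- excluding the pair, which is what happens when $n \equiv 3 \pmod 4$, so that the strip only begins at $h = 3$ --- or holds vacuously --- leaving the pair open, which is what happens when $n \equiv 1 \pmod 4$, so that the strip reaches down to $h = 1$. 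Carrying out this bookkeeping --- pinning down $\mathfrak{s}$, evaluating $d(L(n,1), \mathfrak{s})$, and comparing against the lines $h = 1 + |e/2 + n|$ and $h = e/2 + n - 1$ --- is the step demanding the most care and is the source of the two-case split in the statement.
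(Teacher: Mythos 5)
Your realizability construction (M\"obius band at $(-2n,1)$, pushed-in Seifert surface plus $\mathbb{RP}^2$ at $(2,n)$, crosscap stabilization sweeping out the wedges) and your use of Massey's congruence $e\equiv 2h \pmod 4$ together with the Gordon--Litherland signature wedge match the paper's argument. The divergence --- and the gap --- is in the final obstruction. The paper's $n\pmod 4$ dichotomy comes from the Gilmer--Livingston punctured-Klein-bottle obstruction (Theorem~\ref{klein obstruction}), evaluated via $\sigma(T(2,n))+4\,\mathrm{Arf}(T(2,n)) \pmod 8$: when $n\equiv 3\pmod 4$ this rules out the pair $(6-2n,2)$, hence also $(4-2n,1)$; when $n\equiv 1\pmod 4$ it is silent. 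You replace this with a $d$--invariant obstruction on $\Sigma_2(T(2,n))=L(n,1)$, assert that it reproduces the wedge edge $h\geq 1+|e/2+n|$, and leave the verification as the ``hard part.'' That assertion cannot be right as stated: points on that very edge, $(-2n+2m,\,1+m)$ for $m\geq 0$, are realizable, as are all of $(2+2m,\,n+m)$, and you computed yourself that the surfaces realizing them have $\Sigma(F)$ negative definite with $\mathrm{sign}(\Sigma(F))=-h$. So $L(n,1)$ demonstrably bounds negative definite $4$--manifolds of every $b_2\geq 1$ in the relevant range, and a $d$--invariant wedge of the claimed strength would contradict the realizable list. Put differently, the unresolved strip in the theorem sits strictly between the signature edge $h=e/2+n-1$ and the edge $h=e/2+n+1$, and a bound that ``reproduces'' the latter would wrongly kill the entire strip for all $n$.

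The paper itself flags the deeper issue: its $d$--invariant machinery (Corollaries~\ref{OSS d-inv} and \ref{OS cor}) requires $\Sigma(K)$ to be a homology sphere with $\delta(K)<0$, and it explicitly remarks that every torus knot's double branched cover bounds a negative definite manifold, so the strategy ``cannot be applied directly to individual torus knots.'' For $T(2,n)$ one has $\Sigma_2=L(n,1)$, not a homology sphere, and the naive obstruction is vacuous. A more refined version tracking $c_1(\mathfrak{s})^2$ and which spin$^c$ structure $F$ induces on $L(n,1)$ is conceivable, but it is far from clear it recovers the Arf $\pmod 8$ condition, and you have not carried out that computation --- you note it is where all the work lies. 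To repair the argument, replace the third ingredient with the Gilmer--Livingston obstruction: on the unknown line $\mathrm{sign}(\Sigma(F))=-h$ forces $\Sigma(F)$ negative definite, $\Delta_{T(2,n)}(-1)=n$ determines $\mathrm{Arf}$, and the table of $\sigma+4\,\mathrm{Arf}\pmod 8$ over $n\pmod 8$ produces exactly the stated split, ruling out $(6-2n,2)$ (hence $(4-2n,1)$) precisely when $n\equiv 3\pmod 4$.
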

\noindent Thus, for $T(2, 4k+1)$ and $T(2,4k+3)$ there are exactly $4k$ pairs for which realizability is unknown.

\begin{thm}
For $T(3,n)$ where $n=3k+d$ and $d = 1$ or $2$, the following pairs are realizable:  $$(e,h) = (-4n+2+4k\pm 2m, 1+m+2l) \text{ for } m,l\geq 0,$$
the following pairs are unknown:
        \begin{enumerate}
            \item if $n\equiv 1$ (mod 6),  $(e,h) = \left(\frac{8(1-n)}{3}+2+2m, 1+m\right)$ for $m\geq 0$,
            \item if $n\equiv 2$ (mod 6),  $(e,h) = \left(\frac{8(2-n)}{3}+2+2m, 3+m\right)$ for $m\geq 0$,
        \end{enumerate}
and all other pairs are not realizable.        
\label{T(3,n)}
\end{thm}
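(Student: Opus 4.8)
The plan is to argue as in the proof of Theorem~\ref{T(2,n)}, splitting into (i) realizability of the listed pairs, (ii) an obstruction ruling out everything outside the claimed union, and (iii) a check that the families in (1) and (2) escape all available obstructions. The numerical inputs are Murasugi's computation of $\sigma(T(3,n))$, the value $\Upsilon_{T(3,n)}(1)$, and the $d$-invariants of the two-fold branched cover $\Sigma(2,3,n)$ of $(S^3,T(3,n))$.

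\emph{Realizability.} The heart of this half is a single surface: a M\"obius band $F_0\subset B^4$ with $\partial F_0=T(3,n)$ and normal Euler number $-4n+2+4k$. I would construct $F_0$ from an explicit orientation-incoherent band move turning $T(3,n)$ into a simpler knot---using the presentation of $T(3,n)$ on the standard Heegaard torus, or a band presentation of the closure of $(\sigma_1\sigma_2)^n$---followed by a cap; the framing data of the band and the cap compute the running normal Euler number, which one arranges to land at $-4n+2+4k$. Taking the mirror of $T(3,n)$ supplies the ``$+$'' in $-4n+2+4k\pm 2m$. Once $(-4n+2+4k,\,1)$ is realized, every remaining core pair follows by iterating two boundary-preserving operations: boundary-connect-summing an unknotted projective plane, which sends $(e,h)\mapsto(e\pm 2,\,h+1)$ with either sign, and attaching a trivial tube, which sends $(e,h)\mapsto(e,\,h+2)$. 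Applied to $(-4n+2+4k,1)$ these yield precisely $\{(-4n+2+4k\pm 2m,\ 1+m+2l):m,l\ge 0\}$.

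\emph{Obstruction.} For a nonorientable $F\subset B^4$ with $\partial F=T(3,n)$, normal Euler number $e$, and $\beta_1(F)=h$, three inequalities apply, each bounding $h$ below in terms of $e$ and a knot invariant. First, the Gilmer--Livingston/$G$-signature bound $\bigl|\sigma(T(3,n))-\tfrac{e}{2}\bigr|\le h$, obtained from the two-fold branched cover $W$ of $B^4$ along $F$: it satisfies $\partial W=\Sigma(2,3,n)$, $b_1(W)=0$, $b_2(W)=h$, and $\sigma(W)=\sigma(T(3,n))-\tfrac{e}{2}$. Second, the Ozsv\'ath--Stipsicz--Szab\'o bound, of the same shape with $2\Upsilon_{T(3,n)}(1)$ in place of $\sigma(T(3,n))$. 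Third---the new ingredient---the Ozsv\'ath--Szab\'o $d$-invariant inequalities applied to $W$ (refining Batson's bound): since $\partial W=\Sigma(2,3,n)$ carries at most three spin$^c$ structures and the intersection form of $W$ has rank $h$ and signature $\sigma(T(3,n))-\tfrac{e}{2}$, the numbers $d(\Sigma(2,3,n),\mathfrak{t})$ constrain the admissible $(e,h)$. One then runs through the residue classes of $n$ modulo $6$: substitute Murasugi's $\sigma(T(3,n))$, the value of $\Upsilon_{T(3,n)}(1)$, and the $d$-invariants of $\Sigma(2,3,n)$ read off from $HF^+$ of Seifert fibered rational homology spheres, and check that every pair outside the stated union fails one of the three inequalities---the congruence $e\equiv 2h\pmod 4$, a consequence of $\sigma(W)\equiv b_2(W)\pmod 2$, dispatching the pairs of the wrong parity. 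The point is that for $n\equiv 4,5\pmod 6$ the $d$-invariant eliminates the one extra M\"obius-band Euler number that the signature and $\Upsilon$ bounds still permit, while for $n\equiv 1,2\pmod 6$ it does not---this is the improvement over the Upsilon bound promised in the introduction.

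\emph{The undetermined pairs, and the main obstacle.} For the families in (1) and (2) one checks that all three inequalities hold and that the construction produces nothing with those $(e,h)$; these are the points sitting one step outside the wedge of Figure~\ref{wedge schematic}, so realizability is genuinely open. I expect the delicate step to be the obstruction half---specifically, making the $d$-invariant argument both correct and sharp: one must (a) tabulate $d(\Sigma(2,3,n),\mathfrak{t})$ over all spin$^c$ structures and all $n\bmod 6$, tracking orientations and the spin$^c$ labelling through the Ozsv\'ath--Szab\'o plumbing calculus; (b) use the form of the $d$-inequality valid when $W$ is not negative definite, since $b_2^+(W)>0$ once $h\ge 2$; and (c) show that, assembled with the signature and $\Upsilon$ bounds and matched against the extremal M\"obius band above, these cut out exactly the claimed region---closing the gap for $n\equiv 4,5\pmod 6$ and leaving the one-parameter family open for $n\equiv 1,2\pmod 6$. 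Keeping the sign conventions consistent among $\Sigma(2,3,n)$, the identity $\sigma(W)=\sigma(T(3,n))-\tfrac{e}{2}$, and the orientation used in the $d$-inequality is where most of the bookkeeping lies.
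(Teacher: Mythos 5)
Your outline gets the realizability half essentially right: the paper does construct a M\"obius band for $T(3,n)$ via an explicit band move with $e_0=-4n+2+4k$, and then generates the wedge by repeatedly connect-summing with $P^2$. (One quibble: ``taking the mirror'' does not supply the $+$ sign in $-4n+2+4k\pm 2m$, since that changes the knot; the $\pm$ already comes from the two orientations of the $P^2$ summand, which you correctly invoke a sentence later.) The real problem is in the obstruction half.

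First, the paper does \emph{not} use the $d$--invariant of $\Sigma(2,3,n)$ anywhere in this proof, and it could not: as the paper itself notes, every torus knot has two-fold branched cover bounding a negative definite $4$--manifold (\cite{neumann-raymond}), so the Ozsv\'ath--Szab\'o obstruction (Corollary \ref{OS cor}) gives nothing for an individual $T(p,q)$. That is precisely why the later section of the paper switches to connected sums of torus knots. Your claim that the $d$--invariant ``eliminates the one extra M\"obius-band Euler number'' for $n\equiv 4,5\pmod 6$ is therefore wrong; what closes that gap is the computation $\sigma(T(3,n))-2\Upsilon_{T(3,n)}(1)=-2$ for those residues (Lemmas \ref{signature}, \ref{upsilon}), which makes $R_1\cap R_2=W_1$ with no $d$--invariant input. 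Second, your proposal omits the tool that the paper actually needs to finish the $n\equiv 2\pmod 6$ case: the Gilmer--Livingston punctured Klein bottle obstruction (Theorem \ref{klein obstruction}), a $\sigma+4\,\mathrm{Arf}\pmod 8$ criterion applied after checking that the relevant $\Sigma(F)$ is negative definite. That obstruction is what kills the points at $h=1$ and $h=2$ on the unknown line for $n\equiv 2\pmod 6$, which is the only reason the ``unknown'' family in case (2) begins at $h=3$ rather than $h=1$; without it your argument cannot produce the asymmetry between cases (1) and (2) in the statement. So the proposal as written both relies on an obstruction that provably cannot apply here and leaves out the one that does.
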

\noindent Note that when $n\equiv 4 \text{ or } 5$ (mod 6), the realizable pairs are completely determined.  In the other two cases there are infinitely many unknown points, all lying on a single line in the $(e,h)$--plane.

The following are conjectures of the author.
\begin{conj}
For $T(2,n)$ with $n$ odd, all unknown points are not realizable.
\end{conj}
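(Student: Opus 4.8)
Suppose, for contradiction, that $T(2,n)$ ($n$ odd) bounds a smoothly embedded nonorientable surface $F\subset B^4$ realizing one of the pairs listed as unknown in Theorem~\ref{T(2,n)}, and let $W=\Sigma_2(B^4,F)$ be the double cover of $B^4$ branched along $F$. Then $\partial W$ is the double cover of $S^3$ branched over $T(2,n)$, i.e.\ the lens space $L(n,1)$ up to orientation; moreover $b_1(W)=0$ and $b_2(W)=\beta_1(F)=h$, and a Gordon--Litherland type formula expresses $\sigma(W)$ in terms of $\sigma(T(2,n))=1-n$ and $e(F)$. A direct check shows that every unknown pair lies on the line $h=e(F)/2+(n-1)$, along which $|\sigma(W)|=b_2(W)=h$; thus $W$ is a definite filling of $L(n,1)$. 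The additional structure to exploit is that $W$ carries a smooth involution $\tau$, the deck transformation, whose fixed-point set is a closed surface $\widetilde F\subset W$ with $[\widetilde F]^2=e(F)$ and with $W/\tau=B^4$.

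The first step would be to pin down the intersection form. Capping $W$ off along $\partial W$ by a definite filling $X$ of $-\partial W$ of the same sign as $W$ (the appropriate disk bundle or linear plumbing) produces a closed definite $4$--manifold $Z=W\cup_{\partial W}X$; Donaldson's diagonalization theorem makes $Q_Z$ standard, and combining this with Lisca's classification of embeddings of linear lattices into $\langle\pm1\rangle^N$ forces $Q_W$ to be a lattice of the form $\langle\pm n\rangle\oplus\langle\pm1\rangle^{h-1}$. Crucially, this is exactly the form realized by the double branched cover of \emph{every} pair on the line $h=e/2+(n-1)$, including the realizable ones $(2+2m,\,n+m)$; so no argument using only the isomorphism type of $Q_W$ -- equivalently, only non-equivariant gauge theory on $W$ alone -- can separate the unknown pairs from the realizable ones. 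One must use either the involution $\tau$ or Floer-theoretic input strictly finer than the ordinary $d$--invariant of $L(n,1)$.

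The heart of the argument would then be an equivariant, or $\mathrm{Pin}(2)$--type, refinement. One route is to apply an equivariant version of Donaldson's theorem, or a Furuta-style $\tfrac{10}{8}$ inequality, to the $\Z/2$--manifold $(W,\widetilde F)$, with the aim of upgrading the signature bound $\beta_1(F)\ge|\sigma(K)-e(F)/2|$ by an additive correction determined by $n\bmod 8$ and by the $\mathrm{Pin}$--structure the branching induces along $\widetilde F$, and of checking that the correction excludes precisely the unknown pairs when $n\equiv1\pmod4$ -- the case $n\equiv3\pmod4$ being already handled, in this paper, by the $d$--invariant of $L(n,1)$. A parallel route is Heegaard-Floer-theoretic: replace the ordinary correction terms of $L(n,1)$ by the involutive correction terms $\underline d,\overline d$ of Hendricks--Manolescu, or by the $\mathrm{Pin}(2)$--equivariant correction terms, incorporate the conjugation symmetry coming from $\tau$, and feed these into the corresponding strengthened definite-filling inequalities for $W$. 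The claim would be that this refined system of inequalities is satisfied by the realizable pairs but violated by the unknown ones; the $\bmod 8$ information carried by $\underline d$ but not by $d$ is exactly what should distinguish $n\equiv1$ from $n\equiv3\pmod4$.

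The main obstacle is that at the unknown pairs \emph{every} presently available obstruction -- the signature bound, the Upsilon bound of Ozsv\'ath--Stipsicz--Szab\'o, the $d$--invariant of the two-fold branched cover, and Donaldson's theorem applied to $Q_W$ -- is exactly sharp, so the conjecture genuinely requires a strictly stronger tool. The difficulty is to make equivariant gauge theory (or $\mathrm{Pin}(2)$/involutive Floer theory of lens spaces) effective in these small cases: unlike in Furuta's original closed setting, the boundary $L(n,1)$ contributes nonzero correction terms that must be tracked through the finite-dimensional approximation and compactness argument, and the output must be fine enough to kill the unknown pairs while sparing the realizable pairs that lie on the very same line. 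Even the base case $h=1$ -- deciding whether $T(2,4k+1)$ bounds a M\"obius band of normal Euler number $4-2n$ -- seems to demand such an input; an alternative, more hands-on attack there would be to compute the Brown invariant of a Goeritz form of such a band, or a $\bar\mu$--type invariant of $L(n,1)$, and to show that the value forced by $T(2,4k+1)$ is incompatible with a rank-one Goeritz form of the prescribed normal Euler number.
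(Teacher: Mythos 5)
This statement is labeled a \emph{Conjecture} in the paper, and the paper does not offer a proof of it; so there is no argument to compare your sketch against, and indeed no reviewer can validate a blind proof attempt because the statement is, as far as the paper records, genuinely open. Your proposal is honest about this and does not claim to close the gap. What you do well is diagnose exactly why the problem is hard: the unknown pairs sit on the line $h = \tfrac{e}{2} + (n-1)$ where the Gordon--Litherland signature bound and the $\Upsilon$ bound are both saturated, and the realizable pairs $(2+2m, n+m)$ lie on that same line, so no invariant of $\Sigma(F)$ that factors through $(\partial\Sigma(F), Q_{\Sigma(F)})$ alone can separate the unknown pairs from the realizable ones. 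Your suggestion that one needs to exploit the branched-covering involution (equivariant or $\mathrm{Pin}(2)$/involutive Floer refinements) is a reasonable reading of the obstruction landscape and is consistent with the way the paper's later sections bring in $d$--invariants as a strictly finer tool in a different family.

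However, your sketch contains a factual error about the paper that should be corrected. You write that the case $n\equiv 3\pmod 4$ is ``already handled, in this paper, by the $d$--invariant of $L(n,1)$.'' That is wrong on both counts. First, the tool the paper uses in that case is not the $d$--invariant but the Gilmer--Livingston obstruction (Theorem~\ref{klein obstruction}), which combines the signature and the Arf invariant to rule out the punctured Klein bottle point $(6-2n,2)$, and hence also $(4-2n,1)$. Second, that argument only removes two pairs; for $n=4k+3$ there remain exactly $4k$ unknown pairs (the ones listed in Theorem~\ref{T(2,n)}, namely $(8-2n+2m, 3+m)$ for $0\le m\le n-3$), just as there are $4k$ unknown pairs for $n=4k+1$. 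So neither congruence class is ``handled,'' and the conjecture is equally open in both. Any serious attack along the equivariant lines you propose would need to address both residues; the mod-$8$ behavior of $\sigma + 4\mathrm{Arf}$ explains only why Gilmer--Livingston gets a small head start when $n\equiv 3\pmod 4$, not why that case should be any easier to finish.
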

\begin{conj}
For $T(3,n)$ with $n\equiv 1$ or $2$ (mod 6), all unknown points are not realizable.
\end{conj}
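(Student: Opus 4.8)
The statement is a conjecture, so what follows is a strategy rather than a proof. For $n\equiv 1$ or $2$ (mod 6) the unknown pairs all lie on a single line that forms the \emph{extreme} edge of the realizable wedge for $T(3,n)$: in the notation of Theorem~\ref{T(3,n)} one has $h=1+m$ (resp.\ $3+m$) and $e=2h+2a$ with $2a=\tfrac{8(1-n)}{3}$ (resp.\ a parallel line with the appropriate shift), so the wedge constraint $|a-e/2|\le h$ is met with equality. Consequently a nonorientable surface $F\subset B^4$ with $\partial F=T(3,n)$ realizing such a pair would force the two-fold branched cover $W=\Sigma_2(B^4,F)$ --- a four-manifold with $\partial W=\Sigma_2(T(3,n))=\Sigma(2,3,n)$, $b_1(W)=0$, $b_2(W)=h$ --- to be negative definite (after reorienting, if necessary). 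The computations behind Theorem~\ref{T(3,n)} show that on this edge neither obstruction currently in play is violated: the Upsilon bound holds, and the Fr\o yshov/Ozsv\'ath--Szab\'o inequality $c_1(\mathfrak t)^2+b_2(W)\le 4\,d(\Sigma(2,3,n))$ is satisfied by a rank-$h$ diagonal form, with zero (or bounded) slack; since $\Sigma(2,3,n)$ is an integer homology sphere, the full lattice-embedding refinement (changemaker / Greene--Owens--Strle) collapses to this same inequality. Hence a proof of the conjecture must supply an obstruction strictly finer than these --- sharp enough to detect a single residue class of $e-2h$ modulo $4$, which is exactly the granularity to which the scalar $d$--invariant is blind.

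The plan is to use the structure that $W$ carries beyond its intersection form. First, $W$ is a $\Z/2$--branched cover, so it comes with the covering involution $\tau$ whose fixed-point set is the lift $\widetilde F$ of $F$, a surface in $W$ whose self-intersection and genus are determined by $e(F)$ and $\beta_1(F)$; I would feed the pair $(\Sigma(2,3,n),\tau)$ into $\tau$--equivariant Floer theory --- either $\Z/2$--equivariant Seiberg--Witten Floer homology, or involutive Heegaard Floer homology (Hendricks--Manolescu), available because $\Sigma(2,3,n)$ is an $L$--space and the branching involution is a natural symmetry --- to produce equivariant correction terms $\underline d,\overline d$. The target is an inequality of the form: \emph{if $Y$ bounds a $\tau$--equivariant negative-definite filling $W$ with $b_2(W)=h$ whose fixed surface has self-intersection $e$, then $\overline d(Y)\ge\varphi(e,h)$ for an explicit function $\varphi$}, the point being that the equivariant term registers the fixed surface and so remains effective exactly on the definite edge, where the ordinary term runs out. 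In parallel I would compute, for each residue of $n$ mod 6, the Neumann--Siebenmann $\overline\mu$ invariant of $\Sigma(2,3,n)$ (there are closed Seifert-fibered formulas) together with an instanton-theoretic quantity such as Fr\o yshov's invariant or Daemi's $\Gamma_{\Sigma(2,3,n)}$, since for Brieskorn spheres these are frequently strictly stronger than $d$, and $\overline\mu$ in particular obstructs precise $(\sigma,b_2)$--combinations of spin fillings by a Guillou--Marin-type identity applied to $\widetilde F\subset W$.

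The steps are thus: (1) record the equivariant structure of $W$ --- involution $\tau$, fixed surface $\widetilde F$ with $\widetilde F\cdot\widetilde F$ determined by $e$, $b_2(W)=h$, negative definiteness on the edge --- together with the resulting normalizations; (2) prove the equivariant Fr\o yshov-type inequality above, and separately a spin-filling bound in terms of $\overline\mu$; (3) compute the relevant equivariant correction term, $\overline\mu$, and instanton invariant of $\Sigma(2,3,n)$ for each residue class; (4) substitute the unknown pairs, derive a contradiction, and check that the known realizable pairs survive --- so that the conjectured boundary is exactly where the obstruction turns on. I expect step (2), and the sharpness it is required to have, to be the crux: the obstruction must separate the conjecturally non-realizable edge from the realizable layer one step inside it --- a gap of a single band move --- and it is not at all clear that equivariant, involutive, or instanton invariants are that sharp uniformly in $n$. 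It remains entirely possible that the conjecture is false and that some non-standard sequence of band moves realizes the edge; what the present evidence shows is only that no construction detectable by $\Upsilon$ or by the $d$--invariant of the double branched cover does so. A realistic programme is therefore two-sided: pursue the equivariant/$\overline\mu$/instanton obstruction, and in tandem search for constructions realizing the edge, with the expectation --- based on the pattern that every provably non-realizable pair for these torus knots is already detected by some obstruction --- that the obstruction side ultimately wins.
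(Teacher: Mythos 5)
The statement you were asked about is a conjecture: the paper offers no proof of it, and neither do you --- your text is explicitly a research programme, so there is nothing here that establishes the claim, and the honest verdict is ``not a proof, and not intended as one.'' That said, your framing of \emph{why} the problem is hard matches the paper's own discussion. The paper observes (in the $d$--invariant section) that every unknown pair in Theorems \ref{T(2,n)} and \ref{T(3,n)} corresponds, via Theorem \ref{sign(W(F))} and Lemma \ref{massey lem}, to a surface $F$ with $\Sigma(F)$ negative definite, and that the strategy of Proposition \ref{my bound} cannot be applied to an individual torus knot because $\Sigma(2,3,n)$ already bounds a negative definite $4$--manifold (the Neumann--Raymond plumbing). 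That last point is the precise reason the scalar $d$--invariant route fails here --- not, as you suggest, that $d$ is ``blind to the residue of $e-2h$ mod $4$''; a definite filling with the right $b_2$ simply exists, so Corollary \ref{OS cor} has nothing to contradict, and any successful obstruction must see more than the existence of \emph{some} definite filling (e.g.\ the specific rank $h$, the characteristic/spin data of the lift of $F$, or the equivariant structure, exactly the refinements you propose). Also a small correction of language: the unknown points lie on the boundary line of the obstruction region $R_1=R_2$ \emph{outside} the known realizable wedge $W_1$; calling this the ``extreme edge of the realizable wedge'' conflates the two regions, though your deduction that equality in Corollary \ref{sigma wedge} forces $\Sigma(F)$ negative definite is the same computation the paper makes.

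The genuine gaps in the programme itself are the ones you already flag: step (2), the hypothetical equivariant/$\overline\mu$/instanton inequality sharp enough to exclude the edge line while admitting the realizable points one unit inside, is entirely unproven, and no evidence is given that such an inequality is even true uniformly in $n$ (your claim that the lattice-embedding refinements ``collapse'' to the Fr\o yshov inequality for integer homology spheres is also not justified --- for $d=0$-type situations Elkies-style rigidity gives strictly more than the numerical inequality, so this needs care rather than dismissal). Unless and until an obstruction of that kind is actually formulated and verified for $\Sigma(2,3,n)$, or a construction realizes a point on the edge, the conjecture remains exactly as open as the paper leaves it; your submission should be read as a plausible plan of attack consistent with the paper's Section on the $d$--invariant, not as progress on the statement.
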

\begin{conj}
All torus knots have a single realizable ``minimal point".  In other words, for a torus knot $K$, there is exactly one realizable pair of the form $(e, \gamma_4(K))$,
\end{conj}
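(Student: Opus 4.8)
The statement has two parts --- that $\gamma_4(K)$ is achieved by an explicit construction, and that the normal Euler number at that height is unique --- and I would attack both at once by matching an upper bound against the $d$--invariant obstruction of the double branched cover and then analyzing the slack left in the latter.

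\textbf{Step 1 (Canonical surface).} Fix a positive torus knot $K = T(p,q)$; the negative case follows by mirroring, under which $e \mapsto -e$ while $\gamma_4$ is unchanged. Construct an explicit nonorientable $F_0 \subset B^4$ with $\partial F_0 = K$ of small first Betti number --- for instance by pinching the $(p,q)$ cabling pattern, or by running an efficient sequence of crossing changes to the unknot and banding at each one --- and from the resulting spanning surface in $S^3$ read off $h_0 := \beta_1(F_0)$ and, via the Gordon--Litherland formula, its normal Euler number $e_0$. This already gives $\gamma_4(K) \le h_0$ and realizes $(e_0, h_0)$. The goal is to upgrade this to $\gamma_4(K) = h_0$ with $(e_0,h_0)$ the unique realizable pair at height $h_0$.

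\textbf{Step 2 (Branched--cover obstruction).} For \emph{any} nonorientable $F$ with $\partial F = K$, set $W_F := \Sigma_2(B^4, F)$; then $\partial W_F = \Sigma_2(K)$, $b_1(W_F) = 0$, $b_2(W_F) = \beta_1(F)$, and the intersection form of $W_F$ is the Gordon--Litherland form of the spanning surface, so its signature is determined by $e(F)$ and $\sigma(K)$. Feeding each $\mathfrak{s} \in \mathrm{Spin}^c(\Sigma_2(K))$ into the Ozsv\'ath--Szab\'o $d$--invariant inequality for such cobordisms produces, for every $\mathfrak{s}$, a constraint of the rough shape $\bigl|\tfrac{e(F)}{4} + d(\Sigma_2(K),\mathfrak{s})\bigr| \le \beta_1(F)$. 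Since $\Sigma_2(T(p,q))$ is a Seifert fibred rational homology sphere --- the lens space $L(q,1)$ when $p = 2$, the Brieskorn sphere $\Sigma(2,p,q)$ when $p,q$ are odd, and a Seifert fibred space over $S^2$ with three exceptional fibres in general --- its $d$--invariants are known in closed form, so together with the Gilmer--Livingston signature bound, the Ozsv\'ath--Stipsicz--Szab\'o $\Upsilon$ bound, and the mod~$4$ parity constraint relating $e(F)$ and $\beta_1(F)$, one obtains a fully explicit system of inequalities in $(e(F), \beta_1(F))$.

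\textbf{Step 3 (Collapsing to a point).} Restrict the system of Step 2 to $\beta_1(F) = h_0$. It must be shown that (i) the system forces $\beta_1(F) \ge h_0$, so that $\gamma_4(K) = h_0$; and (ii) at $\beta_1(F) = h_0$ the only $e(F)$ satisfying all the inequalities is $e_0$. Part (ii) is the heart of the matter: a single $d$--invariant inequality pins $e(F)$ only to an interval of width $\approx 8h_0$, hence to $\approx 2h_0$ values in the allowed residue class modulo~$4$, so one must use the variation of $d(\Sigma_2(K),\cdot)$ across all $\det K$ spin$^c$ structures --- in concert with the two further wedges coming from $\sigma$ and $\Upsilon$ --- to eliminate every candidate but $e_0$. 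For $K = T(2,n)$ and $K = T(3,n)$ this is exactly the assertion that no ``unknown'' pair in Theorems~\ref{T(2,n)} and \ref{T(3,n)} is realizable, so in those families Step~3(ii) reduces to showing that each listed unknown pair is obstructed by a sharpening of the $d$--invariant bound; in general it requires the analogous uniform statement for the branched covers $\Sigma_2(T(p,q))$.

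\textbf{Main obstacle.} The bottleneck --- and the reason this remains a conjecture --- is twofold. First, $\gamma_4(T(p,q))$ is not known in closed form for all $p,q$, so verifying that $h_0$ is sharp is itself an open geography problem; I would expect to proceed one value of $p$ at a time, exploiting the recursive structure of the $d$--invariants of $\Sigma_2(T(p,q))$. Second, the $d$--invariant inequalities are individually too weak to collapse the Euler-number window by the required factor of $h_0$: the argument relies on a conspiracy among the branched-cover $d$--invariants, the signature, and $\Upsilon$ leaving no lattice point other than $e_0$ at height $h_0$, and establishing such a statement uniformly in $(p,q)$ --- or supplying a genuinely stronger obstruction --- is where the real difficulty lies. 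Realistically the proof will have to be organized family by family rather than in one stroke.
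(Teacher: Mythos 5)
The statement you are addressing is not proved in the paper at all: it is stated as a conjecture of the author, and your write-up, to its credit, acknowledges that it remains one. So there is no paper proof to match, and what you have produced is a research plan rather than a proof; Step 3 is essentially the conjecture restated (for $T(2,n)$ and $T(3,n)$ it is literally the other two conjectures in the paper), so nothing is established beyond the upper-bound construction of Step 1.

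Beyond that structural point, the central tool you propose is the one the paper explicitly flags as unusable here. You want to feed the $d$--invariants of $\Sigma_2(T(p,q))$ into an Ozsv\'ath--Szab\'o--type inequality to collapse the window of allowed Euler numbers at height $h_0$. But the relevant obstruction (Theorem \ref{os thm} / Corollary \ref{OS cor}) only says something when $\Sigma(F)$ is negative definite, and the paper notes (citing Neumann--Raymond) that the double branched covers of torus knots bound negative definite $4$--manifolds, so this strategy cannot be applied directly to individual torus knots; that is precisely why the paper's $d$--invariant application (Proposition \ref{my bound}, Theorem \ref{connected sum d-inv}) is carried out for connected sums such as $cT(5,9)\#-(c+1)T(5,13)$, where $\delta<0$ can actually be arranged. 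Your ``rough shape'' inequality $\left|\tfrac{e(F)}{4}+d(\Sigma_2(K),\mathfrak{s})\right|\le\beta_1(F)$ for every spin$^c$ structure is not a theorem in the paper and does not follow from Theorem \ref{os thm} without a definiteness hypothesis on $\Sigma(F)$, which fails for general $(e,h)$ (the figure-eight example at the end of the paper is exactly a case where $\Sigma(F)$ would be indefinite). Finally, Step 3(i) presupposes knowing $\gamma_4(T(p,q))$, which is itself open in general. So the proposal identifies a reasonable direction but contains no argument that closes the conjecture, and its key quantitative step rests on an inequality that is neither stated nor available in the generality you need.
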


In addition, we give a family of knots for which the Ozsv\'{a}th-Szab\'{o} $d$--invariant can improve on the bound given by Ozsv\'{a}th, Stipsicz, and Szab\'{o} (using the $\Upsilon$ invariant). \\

\smallskip

\noindent{\it Acknowledgements}\ \  Thanks are due to Charles Livingston for guidance and careful reading of many early versions of this paper.  In addition, Ina Petkova provided many helpful comments and suggestions for improving the exposition.


\section{Background} 
\label{background}

We begin this section with a discussion of the nonorientable 4--genus of knots, followed by definitions and results concerning the normal Euler number.  Finally, we discuss the invariants and results that we use to prove the main theorems.
\subsection{The nonorientable 4--genus}
\label{nonori 4-g}

Computing the nonorientable 4--genus of knots is a difficult problem which remained relatively intractable until Heegaard Floer  theory entered the picture.  For at least a few families of knots it is simple to compute.  For example, the $(2, k)$--torus knot can be easily seen to have nonorientable 4--genus 1; see Figure \ref{2kand41}(a).  A single band move reveals that the $(3, k)$--torus knot also has nonorientable 4--genus 1; see Figure \ref{3kbandmove}.    Note that the $(2,k)$--torus knot actually bounds a surface in $S^3$, while the the $4$--ball is needed to realize a M\"obius band bounded by the $(3,k)$--torus knot.  In Figure \ref{2kand41}(b), we illustrate a nonorientable surface $F$ with boundary the figure-eight knot and $h(F) = 2$.  Viro \cite{viro} proved that the figure-eight knot has nonorientable genus greater than 1, and so $\gamma_4(4_1) = 2$.

\begin{figure}[ht]
\centering
\captionsetup{justification=centering}
\begin{tabular}{ccc}
\includegraphics[width = 0.2\textwidth]{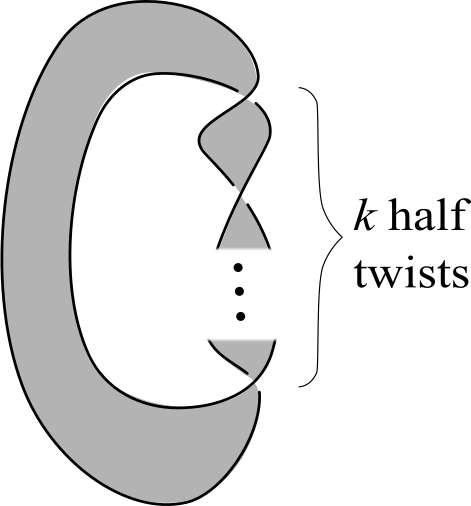} & $\;\;\;\;\;\;\;$ & \includegraphics[width = 0.2\textwidth]{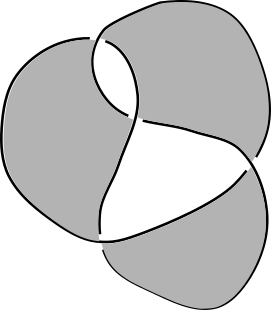}\\
(a) & $\;\;\;\;\;\;$ &(b)\\
\end{tabular}
\caption{(a) The $(2,k)$--torus knot bounding a M\"obius band.\\ (b) The figure-eight knot bounding a punctured Klein bottle.}
\label{2kand41}
\end{figure}

\begin{figure}[ht]
\centering
\captionsetup{justification=centering}
\begin{tabular}{ccccc}
$\vcenter{\hbox{\includegraphics[width = 0.2\textwidth]{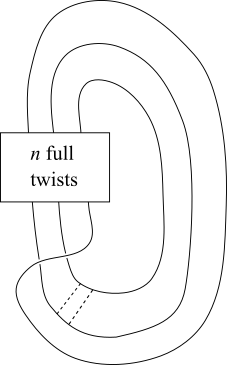}}}$ & $\xrightarrow{\text{Band move}}$ & $\vcenter{\hbox{\includegraphics[width = 0.2\textwidth]{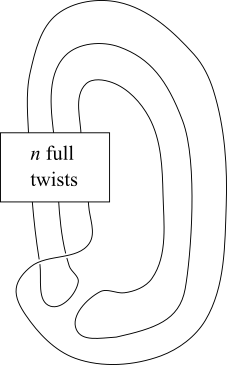}}}$& $\xra{\text{Isotopy}}$ & $\vcenter{\hbox{\includegraphics[width = 0.2\textwidth]{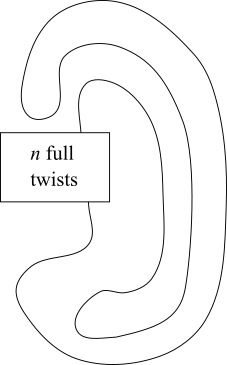}}}$\\
& & & & \\
$\vcenter{\hbox{\includegraphics[width = 0.2\textwidth]{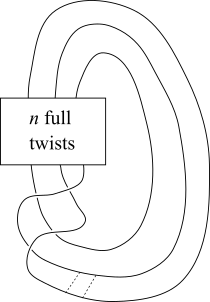}}}$ & $\xrightarrow{\text{Band move}}$ & $\vcenter{\hbox{\includegraphics[width = 0.2\textwidth]{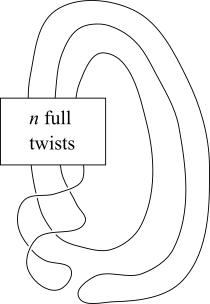}}}$& $\xra{\text{Isotopy}}$ & $\vcenter{\hbox{\includegraphics[width = 0.2\textwidth]{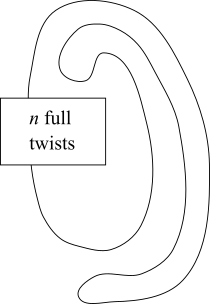}}}$\\

\end{tabular}
\caption{Performing an unoriented band move on $T(3,3n+1)$ (top) and \\ $T(3,3n+2)$ (bottom) results in the unknot. This implies that $\gamma_4(T(3,k))\leq 1$.}
\label{3kbandmove}
\end{figure}

An upper bound for $\gamma_4(K)$ is based on the $4$--genus, $g_4(K)$.  Let $G\subset B^4$ be a surface realizing $g_4(K)$.  Form the connected sum of $G$ with a real projective plane $P^2$ (away from the boundary of $G$); denote this new surface by $F$.  Then $F$ is a nonorientable surface in $B^4$ whose boundary is the knot $K$.  Thus $$\gamma_4(K) \leq h(F) = 2g_4(K) + 1.$$  This bound is sharp for all slice knots. Recently, Jabuka and Kelly \cite{jabuka-kelly} showed that this bound is sharp for some non-slice knots, as well: $\gamma_4(8_{18}) = 2g_4(8_{18})+1$.  However, since the $(2,k)$--torus knot has 4--genus $(k-1)/2$, the bound $2g_4(K)+1$ can be arbitrarily far from $\gamma_4(K)$.    Also, note that this bound implies that slice knots have nonorientable 4--genus equal to 1.

Some early progress towards lower bounds on the nonorientable 4--genus in the smooth case was made in 1975 when Viro \cite{viro} gave an obstruction to a knot bounding a M\"obius band in $B^4$ (using Witt classes of intersection forms of branched covers of the 4--ball branched over nonorientable surfaces).  In 1996, Yasuhara \cite{yasuhara} gave an obstruction to a knot bounding a M\"obius band in $B^4$ (using the knot signature and the Arf invariant).  In 2011, Gilmer and Livingston \cite{gilmer-livingston} gave an obstruction to a knot bounding a punctured Klein bottle in $B^4$ (again, using the knot signature and the Arf invariant).  Finally, in 2012, Batson \cite{batson} showed that the nonorientable 4--genus of a knot can be arbitrarily large by using the knot signature and the Heegaard-Floer $d$--invariant defined by Ozsv\'ath and Szab\'o.


\subsection{The normal Euler number}

In this section, we give a definition of the normal Euler number for a nonorientable surface in $B^4$ with boundary a knot.     We provide a summary here; see Gordon-Litherland \cite{gordon-litherland} for more details.

\begin{defn} \label{edef2}
Let $F \subset B^4$ be a nonorientable surface such that the boundary of $F$ is contained in $S^3 = \partial B^4$  and is a knot $K$.  The normal bundle $\nu (F)$ always admits a nowhere zero section $s$.  On the boundary, $s|_{\partial F}$ provides a framing of $K$.  Define the normal Euler number of the surface $F$ to be
$$e(F) := -\text{lk} (K,s(K)).$$
\end{defn}

   Gordon and Litherland \cite{gordon-litherland} give an algorithm for computing $e$ in the case where the nonorientable surface embeds in $\R^3$.  In some cases, we will build surfaces in $B^4$ using a sequence of band moves; these do not always embed in $S^3$.  For such surfaces, some care is required in the application of Definition \ref{edef2}.
\begin{exmp}\label{e from bands}
We build the M\"obius band in $B^4$ with boundary the trefoil $T(2,3)$ from a disk with a band added, and compute the normal Euler number.  See Figure \ref{einB4}.  Begin with a disk bounded by the unknot and take a 0--framed push-off of the unknot.  Add a band to the disk as shown to form the trefoil $T(2,3)$.  The knot $K=T(2,3)$ and the push-off $K'$ trace parallel surfaces in the 4--ball with intersection count $-$lk$(K, K') =-6$.  It follows that the knot $T(2,3)$ bounds a M\"obius band $F$ in $B^4$ with $e(F) = -6$ and so, for $T(2,3)$, the pair $(e,h) = (-6,1)$ is realizable.
\begin{figure}[ht]
\centering
\begin{tabular}{ccc}
\includegraphics[width = 0.2\textwidth]{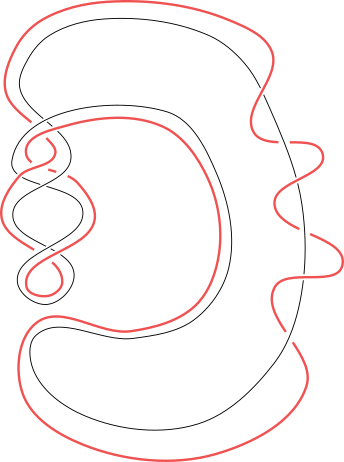} & \includegraphics[width = 0.2\textwidth]{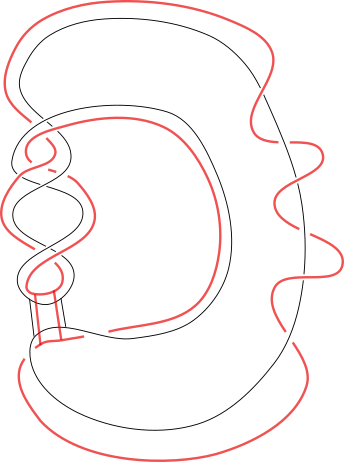} & \includegraphics[width = 0.2\textwidth]{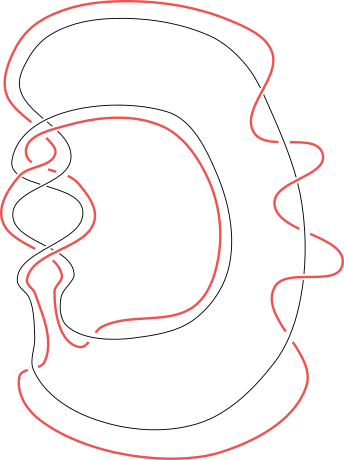}\\
\end{tabular}
\caption{}
\label{einB4}
\end{figure}
\end{exmp}

\subsection{Bounds and obstructions} Here we list some results for use later in the paper.

\subsubsection{The knot signature}
\label{sig}
The knot signature $\sigma(K)$ is a concordance invariant (see \cite{murasugi}).   Some useful properties are listed below.

\begin{thm}
For knots $K,J \subset S^3$,
\begin{enumerate}
\item $\sigma (K\cs  J) = \sigma (K)+\sigma (J)$.
\item $\sigma (-K)=-\sigma (K)$.
\item If $K$ is slice, $\sigma (K)=0.$
\item $\sigma (K)$ is even for all $K$.
\end{enumerate}
\end{thm}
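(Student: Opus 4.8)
The plan is to derive all four statements from the standard description of the signature via a Seifert surface. Fix a Seifert surface $F$ for $K$ with associated Seifert matrix $V$ (taken in a basis of $H_1(F;\Z)$), and set $\sigma(K) := \operatorname{sign}(V+V^{T})$, the signature of the symmetrized Seifert form; that this is a well-defined invariant follows from the usual $S$--equivalence argument, which I would take for granted (see \cite{murasugi}). Each item then becomes a short piece of linear algebra once the surface is chosen appropriately.

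For (1), take Seifert surfaces $F_K$ and $F_J$ for $K$ and $J$ and let $F_K\natural F_J$ denote their boundary connected sum, a Seifert surface for $K\cs J$. Using the union of bases for $H_1(F_K;\Z)$ and $H_1(F_J;\Z)$, and the fact that a curve supported in $F_K$ and a pushoff of a curve supported in $F_J$ are unlinked, the Seifert matrix is the block sum $V_K\oplus V_J$; hence $V+V^{T}=(V_K+V_K^{T})\oplus(V_J+V_J^{T})$ and signatures add over direct sums. For (2), the concordance inverse $-K$ (the reverse mirror of $K$) admits a Seifert surface with Seifert matrix $-V_K$ (or $-V_K^{T}$; the two are congruent), so its symmetrization is $-(V_K+V_K^{T})$ and $\sigma(-K)=-\sigma(K)$. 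Note that (2) already yields the special case $\sigma(K)=0$ when $K=-K$, but (3) is the stronger statement that vanishing holds for \emph{all} slice knots.

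For (3): if $K$ bounds a smoothly embedded disk $D\subset B^{4}$, a half-lives-half-dies argument applied to $F_K\cup D$ (see \cite{murasugi}) produces a direct summand $H\subset H_1(F_K;\Z)$ of rank $g(F_K)=\tfrac12\operatorname{rank} H_1(F_K;\Z)$ on which the Seifert form, and therefore $V_K+V_K^{T}$, vanishes; a symmetric real form possessing a totally isotropic subspace of half the dimension (after first passing to the quotient by its radical, which preserves the signature) has signature $0$. For (4): $V_K+V_K^{T}$ is a symmetric form on $H_1(F_K;\Z)\cong\Z^{2g}$, so it has even rank $2g$; its determinant is $\pm\Delta_K(-1)$, the (odd, hence nonzero) determinant of $K$, so it is nondegenerate over $\R$, with $p$ positive and $q$ negative eigenvalues satisfying $p+q=2g$. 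Therefore $\sigma(K)=p-q=2g-2q$ is even.

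The one genuinely topological input, and the step I would expect to need the most care, is the metabolicity claim in (3): the assertion that a slice knot has a metabolic Seifert form. Everything else is bookkeeping with Seifert matrices together with elementary facts about symmetric bilinear forms (additivity of signature under direct sums, invariance under quotienting by the radical, and the relation between rank, nullity, and signature). Accordingly, my write-up would cite \cite{murasugi} for (3), or include the short half-lives-half-dies argument, and keep the remaining three items brief.
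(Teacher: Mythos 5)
The paper does not prove this theorem at all; it is stated as standard background with a citation to Murasugi, and all four items are indeed classical. Your Seifert-matrix proof is the standard one and is correct. A few small remarks. In (3), the parenthetical about passing to the quotient by the radical is harmless but unnecessary: as you yourself observe in (4), $\det(V_K+V_K^{T})=\pm\Delta_K(-1)$ is odd and hence nonzero, so the symmetrized form is already nondegenerate over $\R$ and the radical is trivial; the argument then reduces to the clean fact that a nondegenerate symmetric real form of rank $2g$ with a $g$--dimensional totally isotropic subspace has signature $0$. In (2), it is worth being explicit that $-K$ denotes the reverse mirror (as you note), since the paper uses $-K$ in the concordance sense, and that your ``congruent'' should really be ``have the same symmetrization'': $-V_K$ and $-V_K^{T}$ both symmetrize to $-(V_K+V_K^{T})$, which is all you need. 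With those cosmetic points aside, the logic is sound, the one genuinely topological input is correctly identified (metabolicity of the Seifert form of a slice knot via half-lives-half-dies), and this matches the standard treatment to which the paper defers.
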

In 1978, Gordon and Litherland proved the following relationship:

\begin{thm}[Gordon-Litherland, \cite{gordon-litherland}]\label{sign(W(F))}
Let $K\subset S^3$ be a knot that bounds a connected surface $F\subset B^4$ and let $\Sigma(F)$ be the two-fold cover of $B^4$ branched over F.  Then \[\sigma(K) = \text{sign}(\Sigma(F))+\frac{1}{2}e(F),\]
where, sign$(\Sigma (F))$ denotes the signature of the intersection form $Q_{\Sigma (F)}$ on $H_2(\Sigma (F))$.
\end{thm}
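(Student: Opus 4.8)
The plan is to deduce the formula from the behavior of the signature under double branched covers, reducing the assertion to the case of a \emph{closed} surface in $S^4$ together with one classical Seifert-form computation.

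First I would cap off $F$. Choose a Seifert surface $S$ for $K$ and push its interior into a second copy of $B^4$, glued to the first along $S^3$ so that $B^4\cup_{S^3}B^4 = S^4$. Set $\hat F = F\cup_K S \subset S^4$. This closed surface is nonorientable because $F$ is, and since a Seifert surface carries a nowhere-zero normal section realizing the $0$--framing on $K$, additivity of relative Euler numbers gives $e(\hat F) = e(F)+e(S) = e(F)$, where now $e(\hat F) = [\hat F]\cdot[\hat F]$ is the ordinary normal Euler number (self-intersection) of a closed surface in a $4$--manifold.

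Next I would split the branched cover. Let $\hat X\to S^4$ be the double cover branched over $\hat F$, let $W_S\to B^4$ be the double cover branched over the pushed-in $S$, and recall that $\partial\Sigma(F) = \partial W_S = \Sigma_2(K)$, the double cover of $S^3$ branched over $K$. Then $\hat X = \Sigma(F)\cup_{\Sigma_2(K)}\overline{W_S}$ (orientation reversed on the second piece), so Novikov additivity gives $\text{sign}(\hat X) = \text{sign}(\Sigma(F)) - \text{sign}(W_S)$. Two standard computations then finish the proof: the formula for the signature of a double branched cover gives $\text{sign}(\hat X) = 2\,\text{sign}(S^4) - \tfrac12[\hat F]\cdot[\hat F] = -\tfrac12 e(F)$; and $W_S$ is built from a handlebody by attaching 2-handles whose framing matrix is the Seifert matrix $V$ of $S$, so its intersection form is $V+V^{T}$ and $\text{sign}(W_S) = \sigma(K)$. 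Combining, $\text{sign}(\Sigma(F)) = \text{sign}(\hat X) + \text{sign}(W_S) = -\tfrac12 e(F) + \sigma(K)$, which is the asserted identity.

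The hard part will be the sign and normalization bookkeeping rather than any deep input. In particular one must pin down precisely how the relative normal Euler number $e(F) = -\text{lk}(K,s(K))$ of the surface with boundary matches the self-intersection $[\hat F]\cdot[\hat F]$ after capping (this uses additivity of relative Euler classes, together with facts of Whitney and Massey type about normal Euler numbers of closed surfaces in $S^4$), and one must fix consistent orientation conventions in Novikov additivity and in the identification of the intersection form of $W_S$ with the symmetrized Seifert form $V+V^{T}$. Once these conventions are fixed, each ingredient---the branched-cover signature formula, Novikov additivity, and the Seifert-form description of $W_S$---is classical.
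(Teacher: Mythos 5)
The paper does not prove this theorem; it cites it directly from Gordon--Litherland and uses it as a black box. So there is no in-paper argument to compare against. Your proposal is a correct and standard geometric proof, but it is not the route Gordon--Litherland take: their argument is combinatorial, working through the Goeritz form of a spanning surface and a relative computation, whereas you cap off and globalize. Your decomposition (close up $F$ with a pushed-in Seifert surface $S$ to a closed $\hat F\subset S^4$, split the double branched cover as $\hat X=\Sigma(F)\cup_{\Sigma_2(K)}\overline{W_S}$, then apply Novikov additivity together with the branched-cover signature formula and $\mathrm{sign}(W_S)=\sigma(K)$) is essentially the Kauffman--Taylor approach. It buys you a shorter, more conceptual derivation at the cost of invoking the $G$-signature theorem; the Gordon--Litherland route stays closer to Seifert/Goeritz linear algebra.

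Two of your ``classical'' inputs deserve more care than sign bookkeeping. First, you write $\mathrm{sign}(\hat X)=2\,\mathrm{sign}(S^4)-\tfrac12[\hat F]\cdot[\hat F]$, but for nonorientable $\hat F$ there is no class $[\hat F]\in H_2(S^4;\mathbb Z)$, so the ordinary Hirzebruch formula does not literally apply. The correct statement is in terms of the normal Euler number $e(\hat F)$ and comes from the $G$-signature theorem for the covering involution on $\hat X$ with fixed surface $\hat F$, whose normal Euler number upstairs is $\tfrac12 e(\hat F)$; you should cite that version rather than the orientable Hirzebruch formula. Second, $e(\hat F)=e(F)$ is correct, but it hinges on two points worth spelling out: the two pieces are glued along the same reference framing of $K$ (the Seifert $0$-framing, which is what makes $e(S)=0$), and a closed nonorientable surface in $S^4$ genuinely can have nonzero normal Euler number---unlike the orientable case---so the additivity is the whole content here, not a triviality. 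Neither is a logical gap, but both are exactly where normalization errors enter, so they merit explicit treatment.
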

\noindent This has the following corollary.
\begin{cor}\label{sigma wedge}
Let $K\subset S^3$ be a knot and let $F\subset B^4$ be a nonorientable surface such that $\partial F=K$.  Then \[\left|\sigma(K)-\frac{e(F)}{2}\right|\leq h(F).\]
\end{cor}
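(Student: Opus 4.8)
The plan is to reduce the inequality to the Gordon--Litherland formula of Theorem~\ref{sign(W(F))} together with an elementary bound on the signature of a symmetric bilinear form. First I would invoke Theorem~\ref{sign(W(F))}: since $K$ bounds the connected nonorientable surface $F\subset B^4$, we have
\[
\sigma(K)-\frac{e(F)}{2}=\mathrm{sign}(\Sigma(F)),
\]
where $\Sigma(F)$ is the two-fold cover of $B^4$ branched over $F$ and $\mathrm{sign}(\Sigma(F))$ is the signature of the intersection form $Q_{\Sigma(F)}$ on $H_2(\Sigma(F);\Q)$. Thus it suffices to prove $|\mathrm{sign}(\Sigma(F))|\le h(F)$.

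Next, the signature of any symmetric bilinear form on a finite--dimensional $\Q$--vector space $V$ has absolute value at most $\dim_\Q V$. Applied to $Q_{\Sigma(F)}$ this gives $|\mathrm{sign}(\Sigma(F))|\le b_2(\Sigma(F)):=\dim_\Q H_2(\Sigma(F);\Q)$, so it remains to bound $b_2(\Sigma(F))$ by $h(F)$. For this I would use the Euler characteristic. The branched cover $\Sigma(F)\to B^4$ is a genuine double cover away from the branch locus and a homeomorphism on it, so $\chi(\Sigma(F))=2\chi(B^4)-\chi(F)$; since $F$ is nonorientable with one boundary component, $\chi(F)=1-h(F)$, and hence $\chi(\Sigma(F))=1+h(F)$. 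On the other hand $\Sigma(F)$ is a connected compact $4$--manifold with nonempty boundary, so $b_0=1$ and $b_4=0$; using the standard fact that $H_1(\Sigma(F);\Q)=0$ (equivalently, by Poincar\'e--Lefschetz duality together with the half-lives--half-dies principle and the fact that $\partial\Sigma(F)=\Sigma_2(K)$ is a rational homology sphere, also $b_3=0$), we conclude $b_2(\Sigma(F))=h(F)$. Combining the three displayed facts yields $|\sigma(K)-e(F)/2|=|\mathrm{sign}(\Sigma(F))|\le b_2(\Sigma(F))=h(F)$.

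The one point that needs care is the vanishing $H_1(\Sigma(F);\Q)=0$: without it the Euler characteristic computation only gives $b_2(\Sigma(F))=h(F)+2b_1(\Sigma(F))$, which is the wrong direction. I would establish it either by a Mayer--Vietoris argument, decomposing $\Sigma(F)$ as a tubular neighborhood of the branch surface glued to the honest double cover of $B^4\setminus F$ determined by the mod--$2$ meridian homomorphism $\pi_1(B^4\setminus F)\to\Z/2$, or, more concretely, by building $\Sigma(F)$ from a handle decomposition of $F$ with a single $0$--handle: each of the $h(F)$ bands of $F$ lifts to a single $2$--handle attached to $B^4$, so $\Sigma(F)$ admits a handle decomposition with handles only in indices $0$ and $2$, whence $H_1(\Sigma(F);\Q)=0$ and $b_2(\Sigma(F))=h(F)$ directly. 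This fact is standard (see Gordon--Litherland \cite{gordon-litherland}), so in the write-up I would record it with only a brief indication rather than a full proof, and present the argument above in the compact form: apply Theorem~\ref{sign(W(F))}, bound $|\mathrm{sign}(\Sigma(F))|$ by $b_2(\Sigma(F))$, and identify $b_2(\Sigma(F))$ with $h(F)$.
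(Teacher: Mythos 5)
Your proof is correct and follows essentially the same route as the paper: apply the Gordon--Litherland formula (Theorem~\ref{sign(W(F))}), bound $|\mathrm{sign}(\Sigma(F))|$ by $b_2(\Sigma(F))$, and identify $b_2(\Sigma(F))$ with $h(F)$. The only difference is that the paper cites Lemma~\ref{massey lem} (Massey) outright for the last step, whereas you re-derive it via the Euler characteristic computation together with the vanishing of $b_1$ and $b_3$ of the branched double cover; your handle-decomposition sketch of that vanishing is a standard and correct way to see it.
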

\noindent The corollary will follow easily from the theorem and the following lemma of Massey:
\begin{lem}[Massey~\cite{massey}] \label{massey lem}
Let $F\subset B^4$ be a connected surface bounded by a knot $K\subset S^3$ and let $\Sigma (F)$ be the two-fold cover of $B^4$ branched over $F$.  Then $b_2(\Sigma (F)) = b_1(F)$. 
\end{lem}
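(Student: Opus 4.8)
The plan is to compute all Betti numbers of $\Sigma := \Sigma(F)$ (with $\Q$ coefficients throughout; write $b_i(\cdot)$ for $\dim_\Q H_i(\cdot;\Q)$) and read off $b_2(\Sigma)$. Note first that $\Sigma$ is a compact oriented $4$--manifold---orientability of a double branched cover over a codimension--two submanifold is standard---with boundary the double branched cover $\Sigma(S^3,K)$, which is a rational homology $3$--sphere since $H_1(\Sigma(S^3,K);\Z)$ is finite of order $\det K$. For a double cover of $B^4$ branched along $F$ one has $\chi(\Sigma) = 2\chi(B^4) - \chi(F) = 2 - \chi(F)$, and $\chi(F) = 1 - b_1(F)$ because $F$ is connected with a single boundary circle; hence $\chi(\Sigma) = 1 + b_1(F)$.

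Next I cut down the remaining Betti numbers. Clearly $b_0(\Sigma)=1$ and $b_4(\Sigma)=0$. Poincaré--Lefschetz duality gives $H_3(\Sigma;\Q)\cong H^1(\Sigma,\partial\Sigma;\Q)$, while the long exact sequence of $(\Sigma,\partial\Sigma)$, together with $H_1(\partial\Sigma;\Q)=0$ and the fact that $H_0(\partial\Sigma;\Q)\to H_0(\Sigma;\Q)$ is an isomorphism, gives $H_1(\Sigma;\Q)\cong H_1(\Sigma,\partial\Sigma;\Q)$; since over $\Q$ we have $\dim H^1(\Sigma,\partial\Sigma;\Q)=\dim H_1(\Sigma,\partial\Sigma;\Q)$, this yields $b_3(\Sigma)=b_1(\Sigma)$. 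Plugging into $\chi(\Sigma)=b_0-b_1+b_2-b_3+b_4$ gives
\[ 1+b_1(F)=\chi(\Sigma)=1-2b_1(\Sigma)+b_2(\Sigma), \qquad\text{i.e.}\qquad b_2(\Sigma)=b_1(F)+2\,b_1(\Sigma). \]
Thus everything reduces to showing $b_1(\Sigma)=0$.

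For that last step I would use the deck involution $\tau$ of the branched cover. Over $\Q$ the homology splits into $\tau$--eigenspaces, and the $(+1)$--eigenspace is $H_*(\Sigma/\tau;\Q)=H_*(B^4;\Q)$, so $H_1(\Sigma;\Q)^{+}=0$. For the $(-1)$--eigenspace, excise a tubular neighborhood of the branch surface $\widetilde F\cong F$ (which $\tau$ fixes pointwise): the Thom isomorphism identifies the relative groups with $H_{*-2}(F;\text{twisted }\Q)$, and $\tau$ acts on each normal disk by rotation through $\pi$, which preserves its orientation and so fixes the Thom class; hence the $(-1)$--eigenspaces of those relative groups vanish, and the long exact sequence of the pair then gives $H_1(\Sigma;\Q)^{-}\cong H_1(B^4\setminus F;\call)$, where $\call$ is the nontrivial rank--one $\Q$--local system on the surface complement (monodromy $-1$ around a meridian of $F$). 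So $b_1(\Sigma)=\dim_\Q H_1(B^4\setminus F;\call)$, and I expect the vanishing of this twisted first homology to be the main obstacle; I would prove it by Poincaré--Lefschetz duality with local coefficients on the compact manifold $B^4\setminus\Int N(F)$, using that $\call$ restricts to a fibrewise--nontrivial local system on the normal circle bundle $\partial N(F)\to F$---so that this circle bundle is $\call$--acyclic---which forces the twisted homology of the complement to collapse. Once $b_1(\Sigma)=0$ is established, the displayed identity gives $b_2(\Sigma)=b_1(F)$, which is the assertion of the Lemma.
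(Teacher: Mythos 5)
The paper does not prove this lemma at all --- it is quoted from Massey --- so your proposal has to stand on its own. Its first two stages do: the computation $\chi(\Sigma)=1+b_1(F)$, the duality argument giving $b_3(\Sigma)=b_1(\Sigma)$ and hence $b_2(\Sigma)=b_1(F)+2b_1(\Sigma)$, and the transfer/Thom-class argument identifying $b_1(\Sigma)$ with $\dim_\Q H_1(B^4\setminus F;\call)$ are all correct. But the remaining step, $H_1(B^4\setminus F;\call)=0$, is not a technical verification you can defer: it is the entire content of the lemma, merely rewritten in the $(-1)$--eigenspace, and the mechanism you sketch for it cannot close the argument. Grant everything you ask for and more: the circle bundle $\partial N(F)\to F$ is indeed $\call$--acyclic (fibrewise nontrivial monodromy), and in fact the whole boundary $\partial X_0$, $X_0=B^4\setminus \Int N(F)$, is $\call$--acyclic --- but for the knot-exterior piece this uses that $\Sigma(K)$ is a rational homology sphere, a part of the boundary your sketch never mentions. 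Even so, all that acyclicity buys you is $H_k(X_0;\call)\cong H_k(X_0,\partial X_0;\call)$, whence Poincar\'e--Lefschetz duality over $\Q$ gives $b_k(X_0;\call)=b_{4-k}(X_0;\call)$, and the Euler characteristic gives $b_2(X_0;\call)=b_1(F)+2b_1(X_0;\call)$. That is exactly the indeterminacy you started with upstairs in $\Sigma$; nothing ``collapses.'' (Splitting the boundary and dualizing $(X_0,E)$ against $(X_0,M_K)$ fares no better: it again only equates $b_1$ with $b_3$.) Duality plus an $\call$--acyclic boundary does not force $H_1(\,\cdot\,;\call)=0$ for a general compact $4$--manifold, so some input special to surface complements in $B^4$ is indispensable here.

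Concretely, what is missing is the one genuinely topological ingredient of Massey's theorem: an argument bounding the twisted first homology from above, not a formal duality identity. The classical routes are either Massey's Smith-theoretic/$\Z/2$--Gysin-sequence analysis of the involution on $\Sigma$, or a Morse/handle-theoretic construction of $\Sigma$ from a movie presentation of $F$ in which connectivity of $F$ (and connectivity of $\partial F=K$) is used to cancel the rational first homology; alternatively one must show directly that the character $-1$ does not occur in $H_1$ of the double cover of the complement, which is again the same nontrivial statement. Until you supply such an argument, the proof establishes only $b_2(\Sigma)=b_1(F)+2b_1(\Sigma)$ together with a correct reformulation of what remains, so the lemma itself is not yet proved.
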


\begin{proof}[Proof of Corollary \ref{sigma wedge}]
Let $K\subset S^3$ and let $F\subset B^4$ be a nonorientable surface such that $\partial F=K$.  Then 
\[\left|\sigma(K) - \frac{1}{2}e(F)\right|= |\text{sign}(\Sigma(F))| \leq b_2(\Sigma (F)) = b_1(F),\]
where the first and last equalities follow from Theorem \ref{sign(W(F))} and Lemma \ref{massey lem}, respectively.
\end{proof}

\subsubsection{The Arf invariant}  The Arf invariant Arf$(K)$ is an invariant of knots satisfying the following:
$$\text{Arf}(K) = \left\{ \begin{array}{ll} 0 & \text{if } \Delta_K(-1) \equiv \pm 1 \text{ (mod } 8) \\
									  1 & \text{if } \Delta_K(-1) \equiv \pm 3 \text{ (mod } 8) . \end{array} \right.$$
In 2010, Gilmer and Livingston gave the following obstruction to a knot bounding a Klein bottle.
\begin{thm}[Gilmer-Livingston, \cite{gilmer-livingston}] \label{klein obstruction}
If $K$ bounds a punctured Klein bottle $F$ in $B^4$ and $\Sigma (F)$, the two-fold cover of $B^4$ branched over $F$, has a positive definite intersection form, then \[\sigma (K)+4\text{\normalfont Arf}(K)\equiv 0,2, \text{ or }\; 4 \;\text{\normalfont  (mod 8)}.\]  If $\Sigma (F)$ is negative definite, then \[\sigma (K)+4\text{\normalfont Arf}(K)\equiv 0,4, \text{ or } \; 6 \; \text{\normalfont (mod 8)}.\]
\end{thm}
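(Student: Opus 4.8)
\noindent\emph{Proof plan.}
The plan is to reduce the statement --- via Massey's Lemma and the Gordon--Litherland formula --- to a single application of the Guillou--Marin signature congruence, with $\mathrm{Arf}(K)$ entering as the Brown invariant of a Seifert surface of $K$.

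Assume $K$ bounds a punctured Klein bottle $F\subset B^4$, and suppose first that $\Sigma(F)$ is positive definite; the negative definite case follows from an identical computation with a sign change (or by applying the positive definite statement to the mirror of $K$, which negates $\sigma(K)$ and $e(F)$ while fixing $\mathrm{Arf}(K)$). By Massey's Lemma~\ref{massey lem}, $b_2(\Sigma(F))=b_1(F)=2$, so $\mathrm{sign}(\Sigma(F))=2$, and Theorem~\ref{sign(W(F))} gives $\tfrac12 e(F)=\sigma(K)-2$; in particular $e(F)\equiv 0\pmod 4$. I would then pass to a closed surface in $S^4$: fix a Seifert surface $F_0$ for $K$, push it into the complementary $4$--ball, and set $\widehat F=F\cup_K F_0\subset S^4=B^4\cup_{S^3}B^4$. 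Then $\widehat F$ is a closed nonorientable surface with $[\widehat F]=0$ in $H_2(S^4;\Z/2)$ --- hence characteristic --- and with normal Euler number $e(\widehat F)=e(F)$, since the Seifert framing of $K$ contributes nothing.

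The engine is the Guillou--Marin congruence applied to the characteristic surface $\widehat F\subset S^4$: for a fixed normalization of the Brown invariant $\beta\in\Z/8$ this reads
\[
0=\mathrm{sign}(S^4)\equiv e(\widehat F)-2\beta(\widehat F)\pmod{16},
\]
where $\beta(\widehat F)$ is the Brown invariant of the $\Z/4$--valued quadratic refinement $q$ of the mod~$2$ self--intersection form on $H_1(\widehat F;\Z/2)$ carried by the embedding. Since $[K]=0$ in both $H_1(F_0;\Z/2)$ and $H_1(F;\Z/2)$, the curves supported on $F_0$ are disjoint from those supported on $F$, so $q$ splits orthogonally and $\beta(\widehat F)=\beta_{F_0}+\beta_F$. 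On $H_1(F_0;\Z/2)$ the form $q$ is twice the mod~$2$ Seifert form of $F_0$, whose Brown invariant is $\beta_{F_0}=4\,\mathrm{Arf}(K)$ by the definition of the Arf invariant. On $H_1(F;\Z/2)$ the self--intersection form is $\langle 1\rangle\oplus\langle 1\rangle$ (the punctured Klein bottle carries two disjoint crosscaps, each an embedded circle with M\"obius band neighborhood) and $q$ takes an odd value on each of these generators, so a short computation gives $\beta_F\in\{0,\,2,\,-2\}\pmod 8$. Plugging $\beta(\widehat F)=4\,\mathrm{Arf}(K)+\beta_F$ and $e(\widehat F)=2\sigma(K)-4$ into the congruence and dividing the resulting (even) congruence by $2$ yields $\sigma(K)+4\,\mathrm{Arf}(K)\equiv 0,2,\text{ or }4\pmod 8$. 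In the negative definite case one has instead $\tfrac12 e(F)=\sigma(K)+2$, and the same computation produces the residues $0,4,\text{ or }6\pmod 8$.

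I expect the main obstacle to be the Brown--invariant bookkeeping: fixing the normalization in the Guillou--Marin congruence, checking $e(\widehat F)=e(F)$, and --- above all --- verifying the two identifications $\beta_{F_0}=4\,\mathrm{Arf}(K)$ (for the pushed--in Seifert surface) and $\beta_F\in\{0,\pm 2\}$ (for the punctured Klein bottle). The definiteness of $\Sigma(F)$ enters the argument only through $\mathrm{sign}(\Sigma(F))=\pm2$; were $\mathrm{sign}(\Sigma(F))=0$ allowed, the obstruction would disappear. The fact that three residues survive rather than one is exactly the three possible values of the Klein bottle's Brown invariant.
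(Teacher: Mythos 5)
The paper cites this result from Gilmer--Livingston without reproving it, so there is no in-paper proof to compare against. Your route---closing up $F$ with a pushed-in Seifert surface $F_0$ to get a characteristic closed surface $\widehat F\subset S^4$, invoking the Guillou--Marin congruence, and splitting the Brown invariant as $\beta(\widehat F)=\beta_{F_0}+\beta_F$ with $\beta_{F_0}=4\,\mathrm{Arf}(K)$ and $\beta_F\in\{0,\pm 2\}$---is in fact the argument Gilmer and Livingston give. The arithmetic is sound: $\mathrm{sign}(\Sigma(F))=2$ together with Gordon--Litherland gives $e(F)=2\sigma(K)-4$, so the congruence becomes $0\equiv 2\sigma(K)-4-8\,\mathrm{Arf}(K)-2\beta_F \pmod{16}$, and halving (and using $4\,\mathrm{Arf}(K)\equiv-4\,\mathrm{Arf}(K)\pmod 8$) yields $\sigma(K)+4\,\mathrm{Arf}(K)\equiv 2+\beta_F\pmod 8$, i.e.\ the residues $0,2,4$; the negative-definite case gives $\sigma(K)+4\,\mathrm{Arf}(K)\equiv -2+\beta_F\pmod 8$, i.e.\ $0,4,6$. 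The three points you flag as needing care---pinning down the sign/normalization convention in Guillou--Marin, checking $e(\widehat F)=e(F)$ via the Seifert framing, and verifying that the $\Z/4$-valued form on the pushed-in Seifert surface is twice the mod-$2$ Seifert form so that its Brown invariant is $4\,\mathrm{Arf}(K)$---are exactly the right ones, and all are standard; with those citations in place the proof is complete.
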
 

\subsubsection{The Upsilon invariant} The  Ozsv\'ath-Stipsicz-Szab\'o Upsilon invariant is another concordance invariant which yields a relationship between the normal Euler number and nonorientable genus.  The definition of the Upsilon invariant $\U_{K}(t)$ arises from the Heegaard Floer knot complex.  It is a piecewise linear function on $[0,2]$.  Some useful properties are listed below.
\begin{thm}[As in \cite{oss1}]
For knots $K,J \subset S^3$,
\begin{enumerate}
\item $\U_{K\cs  J}(t) = \U_{K}(t)+\U_{J}(t)$.
\item $\U_{-K}(t)=-\U_{K}(t)$.
\item If $K$ is slice, $\U_K(t)=0.$
\item If $K$ is an alternating knot, $\U_K(1) = \sigma(K)/2$.
\end{enumerate}
\end{thm}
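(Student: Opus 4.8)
The plan is to obtain properties (1)--(3) from the formal structure of the $t$-modified knot Floer complex, and property (4) from the special form of the knot Floer homology of alternating knots. For (1), I would use the connected sum formula $\cfk^\infty(K\cs J)\simeq \cfk^\infty(K)\otimes_{\F[U,U^{-1}]}\cfk^\infty(J)$, under which the algebraic ($U$-power) and Alexander filtrations add. Since the $t$-modified complex $\ct(K)$ is built from the one-parameter family of $\R$-filtrations interpolating between these two, $\ct(K\cs J)$ is filtered chain homotopy equivalent to $\ct(K)\otimes\ct(J)$, and the relevant numerical grading invariant of a filtered tensor product is the sum of the two invariants; this gives $\U_{K\cs J}(t)=\U_K(t)+\U_J(t)$. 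For (2), I would invoke the duality $\cfk^\infty(-K)\simeq\cfk^\infty(K)^{\ast}$: dualizing a filtered complex reverses filtration levels and negates the associated $t$-modified invariant, so $\U_{-K}(t)=-\U_K(t)$. For (3), properties (1) and (2) applied to $K\cs(-K)$ show that $\U$ descends to a homomorphism from the smooth concordance group to the group of piecewise linear functions on $[0,2]$; a slice knot is concordant to the unknot $U$ and $\U_U\equiv 0$, so $\U_K(t)=0$. (Alternatively one can cite the slice-genus bound $|\U_K(t)|\le t\,g_4(K)$.)

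For (4), the key input is that alternating knots are Floer-thin: the knot Floer homology $\widehat{\mathrm{HFK}}(K)$ is supported on a single diagonal and is determined by the Alexander polynomial together with the signature. Consequently $\cfk^\infty(K)$ is chain homotopy equivalent over $\F[U,U^{-1}]$ to a direct sum of a single \emph{staircase} complex and finitely many two-step \emph{square} (``box'') summands. Each square summand is acyclic and contributes nothing to $H_\ast(\ct(K))$ for any $t$, hence nothing to $\U_K$, so $\U_K$ equals the Upsilon invariant of the staircase summand. That staircase is determined by $\tau(K)$ alone, and for alternating knots $\tau(K)=-\sigma(K)/2$; concretely it is (the mirror of) the staircase of an appropriate $(2,2n+1)$ torus knot, for which a direct computation of $H_\ast$ of the $t$-modified complex at $t=1$ gives $\U_K(1)=-\tau(K)=\sigma(K)/2$.

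The main obstacle is step (4): one must make the thin decomposition of $\cfk^\infty$ precise, verify that the square summands are invisible to the $t$-modified homology for all $t$, and carry out the staircase computation at the midpoint $t=1$. Steps (1)--(3) are essentially formal once the behavior of the $t$-modified complex under tensor products and duality has been recorded, and---together with the model computation $\U_{T(2,2n+1)}(1)=-n$---can also simply be quoted from \cite{oss1}.
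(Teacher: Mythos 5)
Note first that the paper does not prove this theorem; it is quoted as background and attributed directly to Ozsv\'ath--Stipsicz--Szab\'o \cite{oss1}, so there is no in-paper argument to compare against. Your proposal is a reasonable reconstruction of the arguments from the original source, and the outline for (1), (2), and (4) matches the standard treatment: tensor-product behavior of $\cfk^\infty$ under connected sum, duality of the complex under mirroring, and the thin-complex (staircase plus acyclic box summands) decomposition for alternating knots with the identity $\tau(K) = -\sigma(K)/2$. You are also right that (4) is where the real work lies.

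One small logical wrinkle in your argument for (3): you cannot derive concordance invariance of $\U$ from properties (1) and (2) alone. Those two properties say that $\U$ is a homomorphism \emph{provided} it is well-defined on concordance classes, but to know it is well-defined you must already know that concordant knots have equal $\U$ --- equivalently, that slice knots have $\U\equiv 0$ --- which is exactly the content of (3). So the first route you sketch is circular. Your parenthetical alternative, invoking the slice-genus bound $|\U_K(t)|\le t\,g_4(K)$ (or more generally the cobordism bound) and the fact that a slice knot has $g_4=0$, is the correct and non-circular way to obtain (3); that is the route used in \cite{oss1}. With that correction, the proposal is sound.
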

In 2015, Ozsv\'{a}th, Stipsicz, and Szab\'{o} proved the following.

\begin{thm}[Ozsv\'{a}th-Stipsicz-Szab\'{o}, \cite{oss2}]
Suppose that $F\subset [0,1]\times S^3$ is a (not necessarily orientable) smooth cobordism from the knot $K_0 \subset \{0\}\times S^3$ to the knot $K_1 \subset \{1\}\times S^3$.  Then
\begin{equation}
    \left|\U _{K_0}(1)-\U _{K_1}(1)+\frac{e(F)}{4}\right| \leq \frac{h(F)}{2}.
\end{equation}
\label{OSS ineq}
\end{thm}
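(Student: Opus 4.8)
The plan is to reduce Theorem~\ref{OSS ineq} to a single local estimate for a band move and then to add these up along a movie presentation of $F$. First I would isotope $F$ so that the projection $[0,1]\times S^3 \to [0,1]$ restricts to a Morse function on $F$ with distinct critical values; this exhibits $F$ as a movie built from births, saddle (band) moves, and deaths. Since $F$ is connected and $K_0,K_1$ are both nonempty, $F$ admits such a presentation with no births and no deaths at all --- equivalently, $(F,K_0)$ has a relative handle decomposition using only $1$--handles --- so that the movie reads
\[
K_0 = L_0 \;\rightsquigarrow\; L_1 \;\rightsquigarrow\; \cdots \;\rightsquigarrow\; L_N = K_1,
\]
a sequence of $N = h(F)-1$ unoriented band moves in which the intermediate $L_i$ are (possibly multi-component) links. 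Because the self-intersection count defining the normal Euler number is local, it is additive over the bands: $e(F) = \sum_{i=1}^{N} e_i$, where $e_i$ records the framing of the $i$--th band (cf.\ the computation in Example~\ref{e from bands}).

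The heart of the argument is the local statement: \emph{if $L'$ is obtained from $L$ by a single unoriented band move whose band has framing $e_i$, then}
\[
\left| \U_{L}(1) - \U_{L'}(1) + \frac{e_i}{4} \right| \;\le\; \frac12 .
\]
This is where Heegaard Floer theory enters. A band move induces a map between the ($t$--modified) link Floer complexes of $L$ and $L'$ --- one of the two maps in Manolescu's unoriented skein exact triangle, realized by the saddle handle --- and the content of the estimate is that this map and its ``reverse'' each shift the grading computing $\U(\,\cdot\,)(1)$ by at most $\tfrac12$ in absolute value, with the $e_i/4$ term recording the framing (twisting) of the band. Since a band move on a knot typically produces a link, this analysis must be carried out for the link version of the Upsilon invariant; one uses its additivity under split unions and its vanishing on split unknotted components, so that (had one not eliminated them) births and deaths would contribute $0$ and the estimate would still apply.

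Granting the local estimate, Theorem~\ref{OSS ineq} follows from the triangle inequality applied to the telescoping identity $\U_{K_0}(1) - \U_{K_1}(1) = \sum_{i=1}^N (\U_{L_{i-1}}(1) - \U_{L_i}(1))$ together with $e(F)=\sum_i e_i$:
\[
\left| \U_{K_0}(1) - \U_{K_1}(1) + \frac{e(F)}{4} \right|
\;\le\; \sum_{i=1}^{N} \left| \U_{L_{i-1}}(1) - \U_{L_i}(1) + \frac{e_i}{4} \right|
\;\le\; \frac{N}{2} \;=\; \frac{h(F)-1}{2} \;\le\; \frac{h(F)}{2}.
\]

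The hard part will be the local band-move estimate, for two reasons: (i) one must identify the saddle cobordism map precisely enough to read off its grading shift --- in particular, producing the $e_i/4$ correction from the framing of the band and ruling out a larger shift; and (ii) this must be done in the link setting, with a correctly normalized link Upsilon invariant, so that the bound is exactly $\tfrac12$ rather than something weaker. The Morse-theoretic reduction in the first step, though standard, also requires a small check: that the index-$0$ and index-$2$ critical points can be removed, and that $e(F)$ decomposes as the sum of the individual band framings for the resulting movie.
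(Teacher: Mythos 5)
This theorem is not proved in the paper at all: it is quoted verbatim from Ozsv\'ath--Stipsicz--Szab\'o \cite{oss2}, and the paper uses it only as a black box (to derive Corollary~\ref{OSS cor} by capping off one end). So there is no internal proof for your argument to be compared against; what follows is an assessment of your sketch on its own merits and against the strategy of \cite{oss2}.

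Your Morse-theoretic reduction (connected cobordism between two nonempty knots, so no births or deaths; hence $N = h(F)-1$ saddles and $e(F)$ distributed over the bands) is correct and standard. The telescoping step is also fine, \emph{given} the local estimate. But the local estimate is not a lemma you get to assume: it is essentially the entire content of \cite{oss2}. More to the point, the way you set it up introduces a real gap. Because an orientable band always changes the number of link components, a generic movie for $F$ passes through two-component links $L_i$, and your local inequality $\bigl|\U_{L_{i-1}}(1)-\U_{L_i}(1)+\tfrac{e_i}{4}\bigr|\le\tfrac12$ is stated for an invariant $\U_L(1)$ of links that is never defined. The properties you invoke (additivity under split unions, vanishing on split unknots) are about split links and do not control the $L_i$ that actually appear, which are generally non-split. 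Supplying a correctly normalized link $\Upsilon$, proving the per-band grading bound for it (including the $e_i/4$ correction), and checking compatibility with the two-component levels is a substantial piece of Heegaard Floer work that is missing here. The argument in \cite{oss2} instead avoids links: the cobordism is put into a normal form built from knot-to-knot elementary pieces (orientable genus-one pieces and single cross-cap pieces), and the grading estimate is proved for those two cases using their unoriented knot Floer homology and an unoriented skein exact triangle. So your route is genuinely different, and the part you defer is exactly where the difficulty lives.

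Two smaller cautions. First, the decomposition $e(F)=\sum e_i$ should be justified carefully; the normal Euler number is a global linking/self-intersection count, and while the additivity over bands is believable (and consistent with Example~\ref{e from bands}), it is asserted rather than shown. Second, your telescoping would yield the bound $(h(F)-1)/2$, which is strictly sharper than the stated $h(F)/2$. A clean proof producing a stronger constant than the theorem it is meant to reproduce should prompt a second look at the per-band estimate: either the constant $\tfrac12$ is not uniformly correct across all band types (in particular for the orientable split/merge bands passing through links), or there is slack in \cite{oss2} that you would need to demonstrate rather than inherit.
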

\noindent To apply this, consider a cobordism $F$ from a knot $K$ to the unknot.  Capping off the unknot with a disk, we get a surface $F'$ in $B^4$ that bounds $K$ and has $e(F') = e(F)$ and $h(F') = h(F)$.  Since Upsilon is identically zero for the unknot, we have the following:
\begin{cor}\label{OSS cor}
Let $K\subset S^3$ be a knot and let $F\subset B^4$ be a nonorientable surface such that $\partial F=K$.  Then \[\left|-2\U_K(1)+\frac{e(F)}{2}\right|\leq h(F).\]
\end{cor}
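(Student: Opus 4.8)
The plan is to deduce this directly from Theorem~\ref{OSS ineq}, following the recipe sketched in the paragraph just before the statement: turn the surface $F\subset B^4$ into a cobordism from the unknot to $K$ sitting inside a product region $[0,1]\times S^3$, feed it into the cobordism inequality, and use that the Upsilon invariant of the unknot vanishes.

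In detail, I would first pick an interior point $p\in F$ and a small smooth $4$--ball $B\subset\Int(B^4)$ centered at $p$ meeting $F$ in a standardly embedded $2$--disk $D$. Deleting $B$ gives $B^4\setminus\Int(B)\cong[0,1]\times S^3$, arranged so that $\{0\}\times S^3=\partial B$ and $\{1\}\times S^3=\partial B^4$. Then $\widehat F:=F\setminus\Int(D)$ is a smooth cobordism in $[0,1]\times S^3$ from the unknot $U=\partial D\subset\{0\}\times S^3$ (which bounds $D$ inside $B$) to $K=\partial F\subset\{1\}\times S^3$. Two bookkeeping facts need to be checked. First, the relative normal Euler number of $\widehat F$ equals $e(F)$: one chooses the nowhere--zero section $s$ of $\nu(F)$ from Definition~\ref{edef2} to be the standard product section on $D$, so that deleting $D$ does not change the self--intersection (linking) count computing $e$. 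Second, the nonorientable genus of the cobordism $\widehat F$ is the quantity $h(F)$ appearing in Theorem~\ref{OSS ineq}; this is exactly the normalization recorded in the discussion preceding the corollary, since capping the $U$--end off by $D$ recovers $F$.

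With $\widehat F$ in hand, apply Theorem~\ref{OSS ineq} with $K_0=U$ and $K_1=K$:
\[
\left|\U_U(1)-\U_K(1)+\frac{e(F)}{4}\right|\le\frac{h(F)}{2}.
\]
The unknot is slice, so $\U_U\equiv 0$ by property (3) of the Upsilon invariant, and the left-hand side becomes $\bigl|-\U_K(1)+\frac{e(F)}{4}\bigr|$. Multiplying through by $2$ yields
\[
\left|-2\U_K(1)+\frac{e(F)}{2}\right|\le h(F),
\]
which is the claimed inequality.

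The genuinely delicate step is the middle paragraph: one must be careful about what "normal Euler number" means for a cobordism (it is a \emph{relative} Euler number, computed against the product framings of the surface at its two ends) and verify that the product framing near the $U$--end agrees with the framing convention of Definition~\ref{edef2}, and similarly that the Betti-number normalization matches the one built into Theorem~\ref{OSS ineq}. One also wants to orient the cobordism from $U$ to $K$, rather than from $K$ to $U$, so that the term $-2\U_K(1)$ appears with precisely the sign in the statement. Once these conventions are pinned down, the corollary is immediate.
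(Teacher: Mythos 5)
Your argument is correct and is the same one the paper uses: the paper's proof is precisely the two-sentence sketch preceding the corollary, where one passes between a surface in $B^4$ and a cobordism to the unknot by removing (or capping with) a small disk, applies Theorem~\ref{OSS ineq}, and uses that $\U$ vanishes for the unknot. Your write-up just makes explicit the disk-removal direction and the bookkeeping about the relative Euler number and first Betti number that the paper leaves implicit.
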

\begin{rem} For an alternating knot $K$, $\sigma(K)/2 = \Upsilon_K(1)$ and so this bound is equivalent to that of Gordon and Litherland.\end{rem}
\noindent Combining this with Corollary \ref{sigma wedge}, Ozsv\'{a}th, Stipsicz, and Szab\'{o} gave the following lower bound on the nonorientable 4--genus.
\begin{cor}[\cite{oss2}]\label{OSS bound}
Let $K\subset S^3$ be a knot.  Then \[\left|\U_K(1)-\frac{\sigma(K)}{2}\right|\leq \gamma _4(K).\]
\end{cor}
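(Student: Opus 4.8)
The plan is to eliminate the normal Euler number $e(F)$ from the two estimates already in hand — Corollary~\ref{sigma wedge} and Corollary~\ref{OSS cor} — and then optimize over all spanning surfaces. First I would fix an arbitrary connected nonorientable surface $F\subset B^4$ with $\partial F = K$. Corollary~\ref{sigma wedge} gives $\left|\sigma(K)-\tfrac{e(F)}{2}\right|\leq h(F)$, while Corollary~\ref{OSS cor} gives $\left|-2\U_K(1)+\tfrac{e(F)}{2}\right|\leq h(F)$, equivalently $\left|2\U_K(1)-\tfrac{e(F)}{2}\right|\leq h(F)$. The crucial observation is that both inequalities refer to the \emph{same} surface $F$, so the two occurrences of $e(F)$ are literally equal.

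Next I would apply the triangle inequality in the form $|x|+|y|\geq |x-y|$ with $x=\sigma(K)-\tfrac{e(F)}{2}$ and $y=2\U_K(1)-\tfrac{e(F)}{2}$. Since $x-y=\sigma(K)-2\U_K(1)$, this yields
\[
\left|\sigma(K)-2\U_K(1)\right|\leq 2h(F),
\]
that is, $\left|\U_K(1)-\tfrac{\sigma(K)}{2}\right|\leq h(F)$. Because this holds for every nonorientable surface $F\subset B^4$ bounded by $K$, I would conclude by taking the minimum of $h(F)$ over all such $F$; by Definition of $\gamma_4$ this minimum is $\gamma_4(K)$, giving $\left|\U_K(1)-\tfrac{\sigma(K)}{2}\right|\leq \gamma_4(K)$.

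There is no genuine obstacle here: all the substance has already been packaged into Corollaries~\ref{sigma wedge} and~\ref{OSS cor} (which in turn rest on Theorem~\ref{sign(W(F))}, Lemma~\ref{massey lem}, and Theorem~\ref{OSS ineq}). The only point deserving (minor) care is that the triangle inequality must be applied to two quantities attached to one and the same surface, so that the $e(F)$ terms cancel exactly; bounding the two $e(F)$ terms via different surfaces would only produce a weaker inequality.
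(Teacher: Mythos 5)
Your proposal is correct and is exactly the argument the paper indicates: the paper states this corollary follows by "combining" Corollary~\ref{sigma wedge} with Corollary~\ref{OSS cor}, and the triangle inequality applied to a single surface $F$ (so the $e(F)/2$ terms cancel) followed by minimizing $h(F)$ is the intended combination. You have simply written out the algebra that the paper leaves implicit.
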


Recently, Jabuka and Van Cott \cite{jabuka-van-cott} addressed a conjecture of Batson \cite{batson} by using Corollary \ref{OSS bound} to compute the nonorientable 4--genus of many families of torus knots.


\section{The geography problem for families of torus knots}
\label{evsh}
\subsection{A detailed example}
Here we return to Question \ref{question 1} and begin with an example: the trefoil knot.  As we saw in Section \ref{nonori 4-g}, the trefoil knot $T(2,3)$ bounds a M\"obius band with normal Euler number $-6$, so $\gamma_4(T(2,3)) = 1$ and the pair $(e,h) = (-6,1)$ is realizable for $T(2,3)$.  We would like to identify all pairs $(e, h)$ that arise for nonorientable surfaces $F$ whose boundary is $T(2,3)$.  

To start, we look at surfaces that can be realized by modifying surfaces we already know are realizable.  In particular, given a nonorientable surface $F\subset B^4$ whose boundary is a knot $K\subset S^3$, we can form the connected sum of $F$ with the real projective plane $P^2$ (away from the boundary) to form a new nonorientable surface $F'$ with boundary $K$.  This surface will have $e(F') = e(F) \pm 2$ (since $P^2$ has Euler number $\pm 2$) and $h(F') = h(F)+1$.

Thus, since $(-6, 1)$ is realizable for the trefoil, so are $(-8, 2)$ and $(-4, 2)$.  Adding another $P^2$ shows that $(-10,3)$, $(-6, 3)$, and $(-2, 3)$ are realizable as well.  We can continue this process indefinitely to get the following proposition.

\begin{prop} \label{trefoil init}
For $T(2,3)$, all pairs of the form 
$$(e,h) = (-6 \pm 2n, 1+n+2m)$$
 for $m,n\geq 0$ are realizable.  
\end{prop}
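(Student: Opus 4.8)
The plan is to prove the proposition by induction, taking as the inductive move the operation of forming an interior connected sum with a projective plane, exactly as described in the paragraph preceding the statement, and taking as the base case the M\"obius band bounded by $T(2,3)$ from Example \ref{e from bands}. First I would isolate the two elementary facts the argument rests on. Let $P^2_+ \subset B^4$ denote a standardly embedded projective plane sitting in a small ball, with normal Euler number $+2$, and let $P^2_-$ be its mirror image, with normal Euler number $-2$ (reflecting the ambient ball reverses the sign of the self-intersection, so both signs genuinely occur). Fact (i): if $F \subset B^4$ is a nonorientable surface with $\partial F = K$, and $F' = F \# P^2_\pm$ is obtained by deleting an open disk from the interior of $F$ and from $P^2_\pm$ (chosen in a ball disjoint from $F$) and gluing along the resulting circles, then $\partial F' = K$; both surfaces are connected, nonorientable, with a single boundary component, so $\chi = 1 - h$ for each, and $\chi(F') = \chi(F) - 1$ forces $h(F') = h(F) + 1$; and $e(F') = e(F) \pm 2$ by additivity of the normal Euler number under interior connected sum.

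Granting fact (i), the proposition follows. Starting from the M\"obius band $F_0$ with $(e,h) = (-6,1)$, connect-sum on $a$ copies of $P^2_+$ and $b$ copies of $P^2_-$; since the summands occupy disjoint balls, the resulting surface is independent (up to ambient isotopy) of the order in which the connected sums are performed, and by fact (i) it has boundary $T(2,3)$, normal Euler number $-6 + 2a - 2b$, and nonorientable genus $1 + a + b$. Taking $(a,b) = (n+m,\, m)$ realizes $(-6 + 2n,\, 1 + n + 2m)$, and taking $(a,b) = (m,\, n+m)$ realizes $(-6 - 2n,\, 1 + n + 2m)$. As $n, m$ range over the nonnegative integers this is exactly the family of pairs in the statement, so the proposition is proved.

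The argument is routine, and there is no real obstacle; the only point deserving any care is fact (i), namely that the normal Euler number is additive under interior connected sum (so that the framed boundary knot is unaffected and the self-intersection numbers add) and that the two standard embeddings of $P^2$ in a ball contribute $+2$ and $-2$ respectively. Both are classical; see Gordon--Litherland \cite{gordon-litherland} and the discussion preceding the proposition. One could alternatively phrase the whole proof as a direct induction on $n + m$, adding a single $P^2_\pm$ at each step, which is what the examples immediately before the statement do in low degree; the bookkeeping above just packages all the steps at once.
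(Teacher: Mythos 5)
Your proof is correct and takes essentially the same approach as the paper: start from the M\"obius band with $(e,h)=(-6,1)$ and form interior connected sums with $\pm 2$-framed projective planes, using the facts that this operation adds $\pm 2$ to $e$ and $1$ to $h$. You simply make explicit the bookkeeping (the $(a,b)$ parametrization) that the paper leaves informal in the paragraph preceding the proposition.
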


 \noindent Figure \ref{trefoil grid 1} gives a visual representation of the proposition.  In the figure, many lattice points are omitted.  These are points that are ruled out by a theorem of Massey \cite{massey}: for all nonorientable surfaces $F$ with boundary a knot, \begin{equation} \label{ehparity}e(F) \equiv 2h(F) \text{ (mod } 4).\end{equation}

\begin{figure}[ht]
  \centering
  \captionsetup{justification = centering}
  \includegraphics[width = 0.4\textwidth]{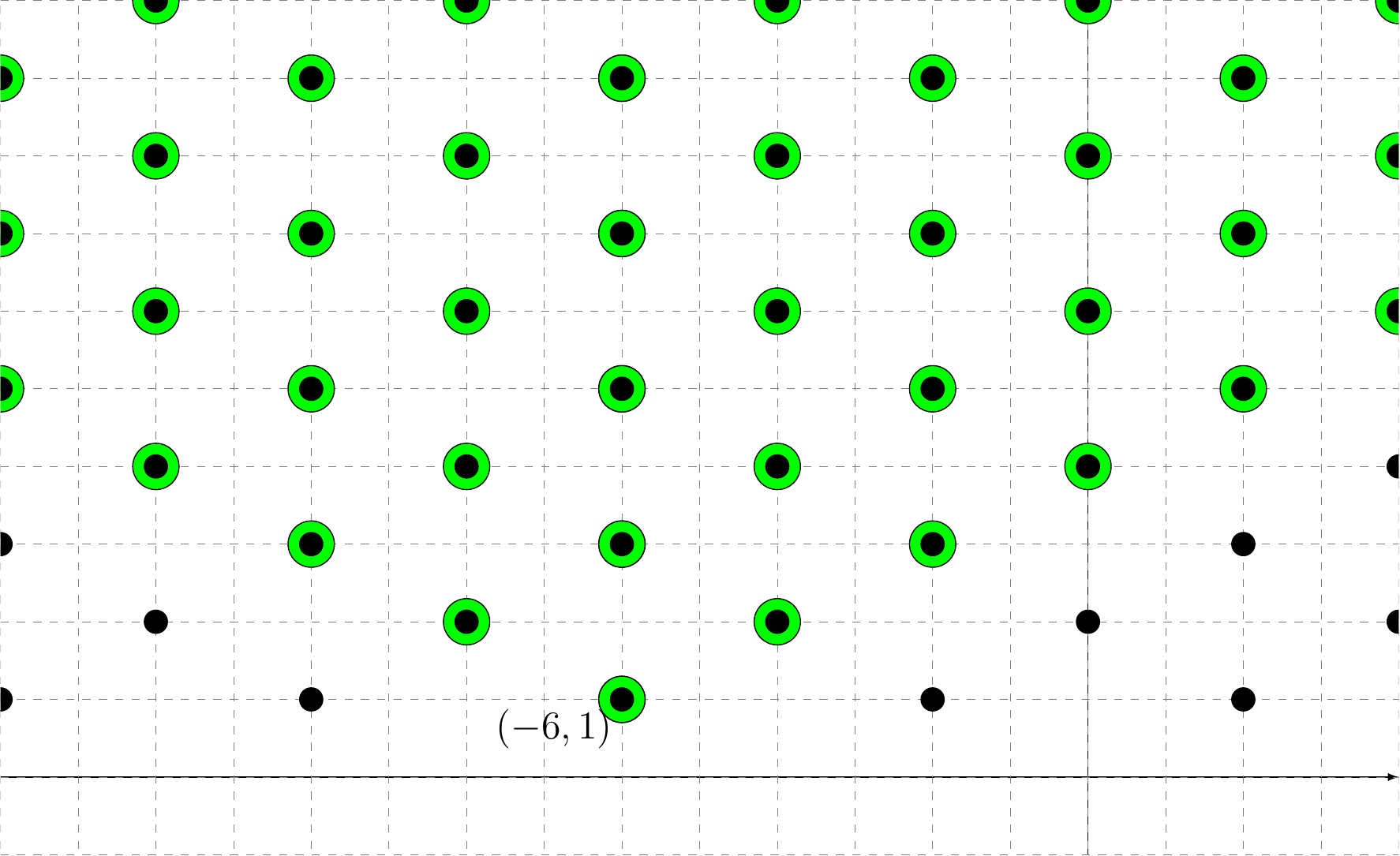}
  \caption{$(e,h)$ pairs for $T(2,3)$ based on Proposition \ref{trefoil init}. \\ Highlighted points indicate realizable pairs.}
  \label{trefoil grid 1}
\end{figure}

 By noticing that a torus knot $T(2, k)$ with $k$ odd bounds a M\"obius band with normal Euler number $-2k$, we get an initial realizable pair of $(e,h) = (-2k, 1)$ and the result above generalizes.
\begin{cor}
For $T(2,k)$ with $k$ odd, all pairs of the form $(e,h) = (-2k \pm 2n, 1+n+2m)$ for $m,n\geq 0$ are realizable.
\end{cor}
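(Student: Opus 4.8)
The plan is to reduce the corollary to Proposition~\ref{trefoil init}: the proof of that proposition used only two facts about the trefoil, namely that $T(2,3)$ bounds a M\"obius band in $B^4$ with normal Euler number $-6$, and that taking a connected sum with $P^2$ away from the boundary turns a realizable pair $(e,h)$ into the two realizable pairs $(e\pm 2,h+1)$. The second fact is general and needs no change. So the whole task is to supply the analogous base case: that $T(2,k)$ with $k$ odd bounds a M\"obius band $M_k\subset B^4$ with $e(M_k)=-2k$, giving the realizable pair $(e,h)=(-2k,1)$.

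First I would construct $M_k$ and compute its Euler number. Present $T(2,k)$ as the boundary of the band with $k$ half-twists attached to a disk (equivalently, the obvious nonorientable spanning surface of the closure of the $2$--braid $\sigma_1^{k}$); for $k$ odd this band is a M\"obius band embedded in $S^3\subset B^4$, as in Figure~\ref{2kand41}(a). To compute $e(M_k)$ I would run the push-off argument of Example~\ref{e from bands} verbatim, now with $k$ half-twists in the band rather than three: building $M_k$ from a disk bounded by the unknot and a single attached band, the knot $K=T(2,k)$ and the $0$--framed push-off $K'$ span parallel surfaces in $B^4$ meeting with signed intersection number $-2k$, whence $e(M_k)=-2k$. (Alternatively one may apply the Gordon--Litherland algorithm \cite{gordon-litherland} directly to the diagram of $M_k$ in $S^3$.) The one point requiring care is the sign: one must check that the twisting convention gives $-2k$ and not $+2k$, which is pinned down by comparison with the $k=3$ case of Example~\ref{e from bands}, where the answer was $e=-6$; it can also be cross-checked against Corollary~\ref{sigma wedge}. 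This sign check is the only non-routine step.

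With the base pair $(-2k,1)$ in hand, the corollary follows by the same bookkeeping as Proposition~\ref{trefoil init}. Starting from a M\"obius band realizing $(e,h)=(-2k,1)$ for $T(2,k)$, I would apply the $P^2$--connected-sum operation $n$ times in a single direction, producing the realizable pair $(-2k\pm 2n,\,1+n)$, and then apply it $m$ further times with sign $+$ and $m$ more times with sign $-$, which adds $0$ to $e$ and $2m$ to $h$. The resulting surface is a connected nonorientable surface in $B^4$ with boundary $T(2,k)$ realizing $(e,h)=(-2k\pm 2n,\,1+n+2m)$ for all $m,n\geq 0$, as claimed; every such pair automatically satisfies Massey's parity constraint~\eqref{ehparity} since $k$ is odd and each $P^2$ summand changes $e-2h$ by a multiple of $4$.
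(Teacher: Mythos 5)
Your proposal is correct and takes essentially the same approach as the paper: the paper's proof of this corollary is the single sentence preceding it, which observes that $T(2,k)$ bounds a M\"obius band with $e=-2k$ (giving the base pair $(-2k,1)$) and then invokes the same iterated $\#P^2$ argument used for Proposition~\ref{trefoil init}. Your version simply spells out the Euler-number computation (which the paper leaves implicit) and the bookkeeping, both of which are correct.
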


Another strategy for constructing nonorientable surfaces with boundary a knot is to use the 4--genus of the knot.  For a given knot $K$, we can find an orientable surface $S \subset B^4$ with $\partial S=K$ and genus $g=g_4(K)$.  The surface $S$ is then a punctured connected sum of $g$ tori.  If we form a connected sum of $S$ with a projective plane $P^2$, then we get a nonorientable surface $S'$ that has $h(S') = 2g+1$.  Since $S$ is orientable, it contributes 0 to the normal Euler number.  Thus $e(S')=\pm2$.  This yields a point on our $(e,h)$ graph for $K$.  For the torus knot $T(2,k)$, we know that $g_4(T(2,k))=(k-1)/2$ and so this yields the points $(\pm 2,k)$.  For the trefoil knot, (2,3) is a new point.  By forming the connected sum of $S'$ with $n$ copies of $P^2$, we see that the pairs $(2+2n, 3+n)$ are also new realizable pairs for $T(2,3)$.  See Figure \ref{trefoil grid 3} for the updated graph.

\begin{figure}[ht]
  \centering
  \captionsetup{justification = centering}
  \includegraphics[width = 0.4\textwidth]{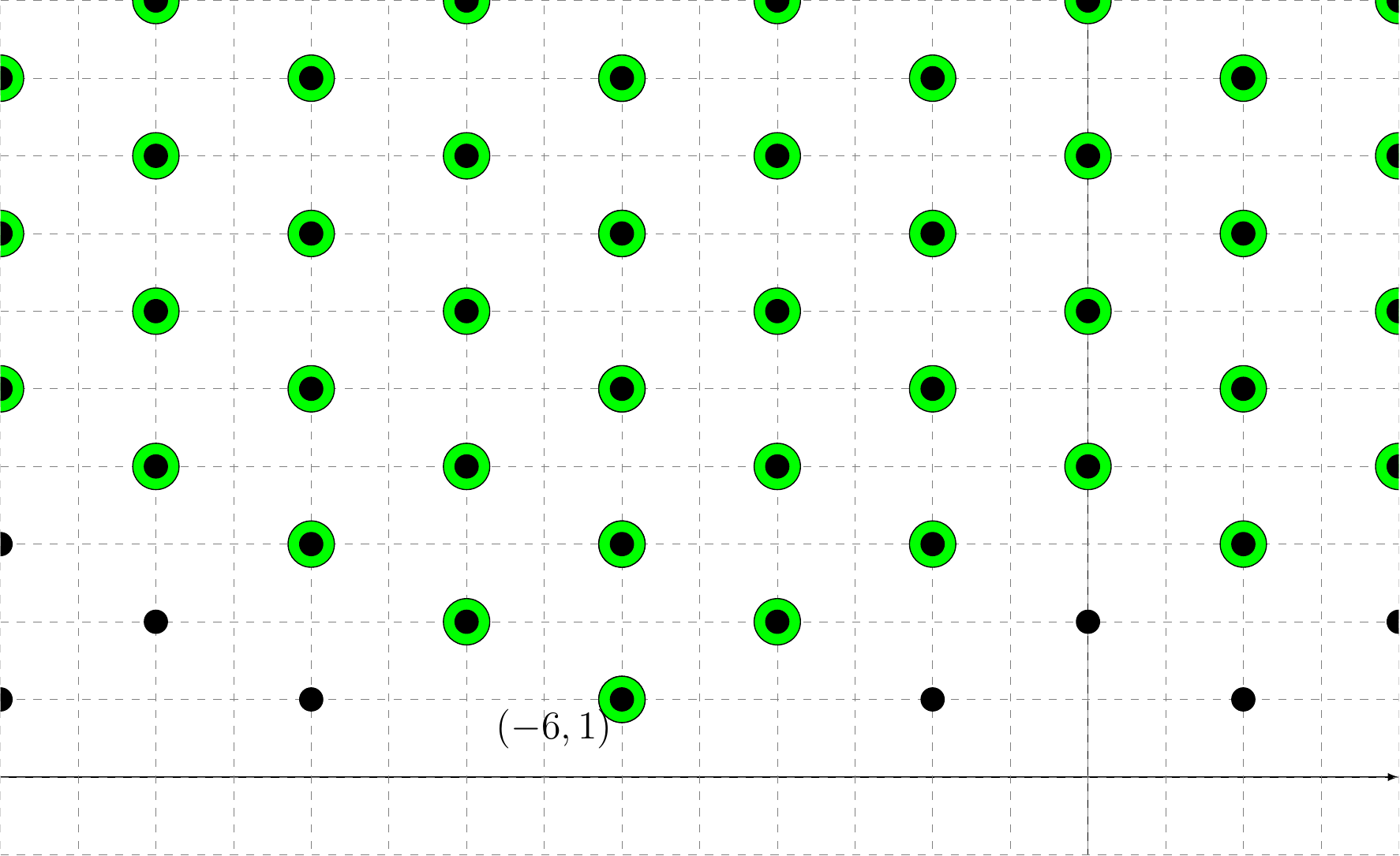}
  \caption{Updated $(e,h)$ pairs for $T(2,3)$. \\ Highlighted points indicate realizable pairs}
  \label{trefoil grid 3}
\end{figure}

Next, we apply Corollaries \ref{sigma wedge} and \ref{OSS cor}:  if $F\subset B^4$ is a nonorientable surface with boundary equal to a knot $K\subset S^3$,then
$$\left|\sigma(K)-\frac{e(F)}{2}\right|\leq h(F) \;\text{ and }\; \left|2\U_K(1)-\frac{e(F)}{2}\right|\leq h(F).$$
 Notice that since $T(2,3)$ is an alternating knot, Corollaries \ref{sigma wedge} and \ref{OSS cor} yield the same inequality.  Applying Corollary \ref{sigma wedge} to the knot $T(2,3)$ and computing that $\sigma(T(2,3)) = -2$, we can restrict our search for realizable $(e,h)$ pairs to those satisfying $\left|-2-\frac{e}{2}\right|\leq h$.  Factoring in our previous conclusions, we see that there are only two unknown pairs, $(-2, 1)$ and $(0, 2)$,  as shown in Figure \ref{trefoil grid 2}.

To rule out the final unknown points, we use the obstruction for punctured Klein bottles in Theorem \ref{klein obstruction}.
For the knot $T(2,3)$, we first apply Theorem \ref{sign(W(F))} to see that both unknown points $(e,h)$ in Figure \ref{trefoil grid 2} must correspond to a surface $F$ with sign$(\Sigma (F)) = -h$.  Lemma \ref{massey lem} tells us that $b_2(\Sigma (F))=b_1(F) = h$.  Thus, both of the unknown points correspond to surfaces $F$ for which $\Sigma(F)$ has negative definite intersection form.
Since the pair $(e, h)=(0,2)$ corresponds to a surface $F$ such that $\Sigma (F)$ is negative definite, we apply Theorem \ref{klein obstruction}.  As $\sigma (T(2,3)) = -2$ and $\text{\normalfont Arf}(T(2,3)) = 1$, Theorem \ref{klein obstruction} rules out the realization of this pair.  As a consequence, the pair $(e,h) = (-2,1)$ cannot be realized.  This fully determines the realizable $(e,h)$ pairs for $T(2,3)$.

\begin{figure}[ht]
  \centering
  \captionsetup{justification = centering}
  \includegraphics[width = 0.4\textwidth]{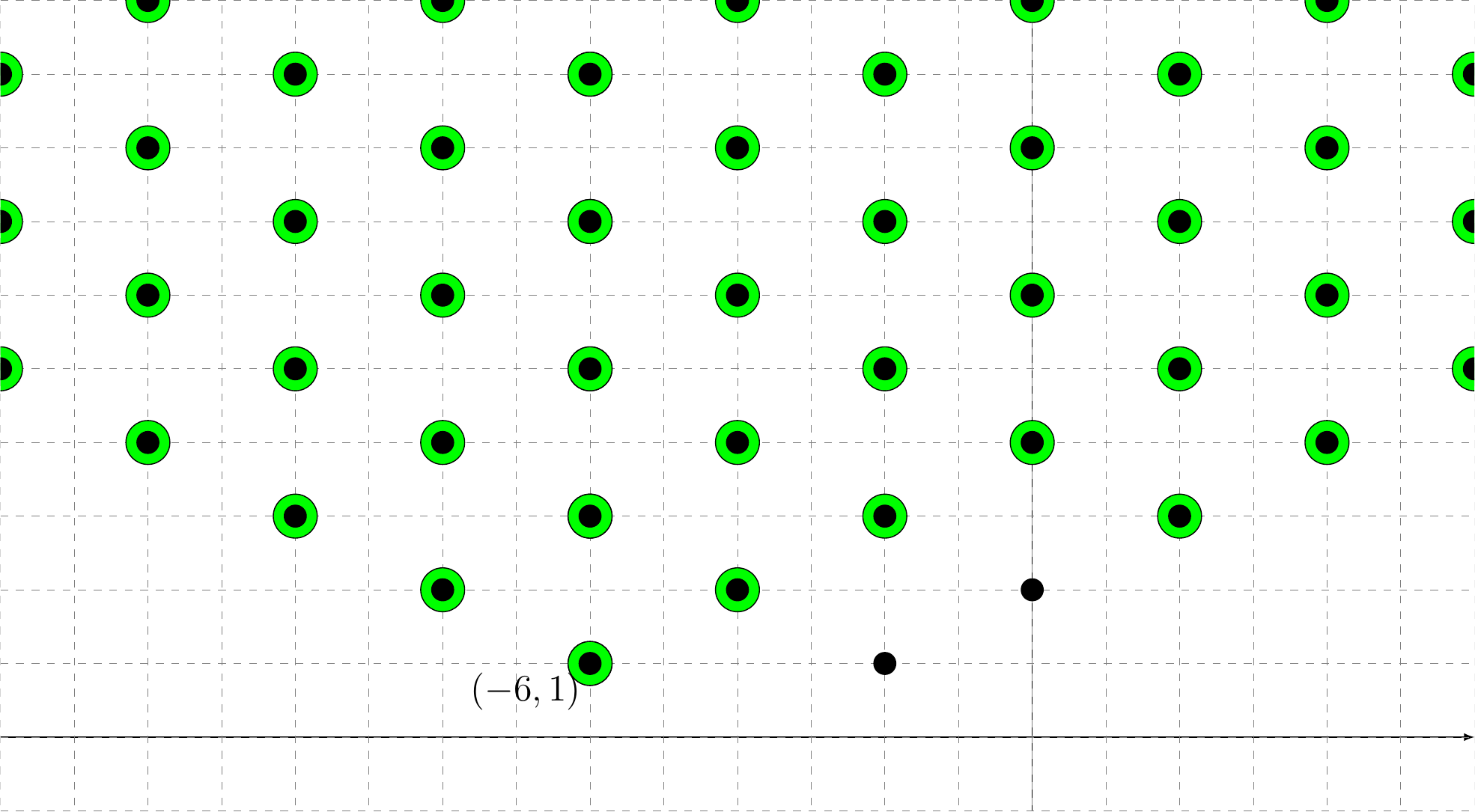}
  \caption{Updated $(e,h)$ pairs for $T(2,3)$ after application of Corollary \ref{sigma wedge}. \\ Highlighted points indicate realizable pairs}
  \label{trefoil grid 2}
\end{figure}
\section{Proof of Theorem \ref{T(2,n)}}

Now, we move on to the families of $(2,n)$ and $(3,n)$ torus knots.  We will henceforth ignore points that have been ruled out by Equation \ref{ehparity}. 
\begin{proof}[Proof of Theorem \ref{T(2,n)}] 
The knot $T(2,n)$ bounds a M\"obius band with normal Euler number $-2n$; see Figure \ref{2kand41}(a).  This yields the realizable pair $(-2n,1)$.  Since $g_4(T(2,n)) = \frac{n-1}{2}$, we can form a connected sum with $P^2$ to get nonorientable surfaces $S^{\pm}$ with boundary $T(2,n)$ such that $e(S^{\pm}) = \pm 2$ and $h(S^{\pm}) = 2\cdot \left(\frac{n-1}{2}\right) +1 = n$.  So the points $(\pm 2, n)$ are realizable for $T(2,n)$. Finally, by forming connected sums with $P^2$, we get wedges $W_1, W_2, W_3$ of realizable pairs starting at all three of these initial realizable points:
$$W_1 = \left\{ (e,h) : \left|-n-\frac{e}{2}\right|+1\leq h\right\},$$
$$W_2 = \left\{ (e,h) : \left|-1-\frac{e}{2}\right|+n\leq h\right\},$$
and
$$W_3 = \left\{ (e,h) : \left|1-\frac{e}{2}\right|+n\leq h\right\}.$$
See Figure \ref{T(2,n) grid} for a schematic picture of the $(e,h)$--graph of the regions.  Notice that $W_2\subset W_1$.

\begin{figure}[ht]
  \centering
  \captionsetup{justification = centering}
  \includegraphics[width = 0.4\textwidth]{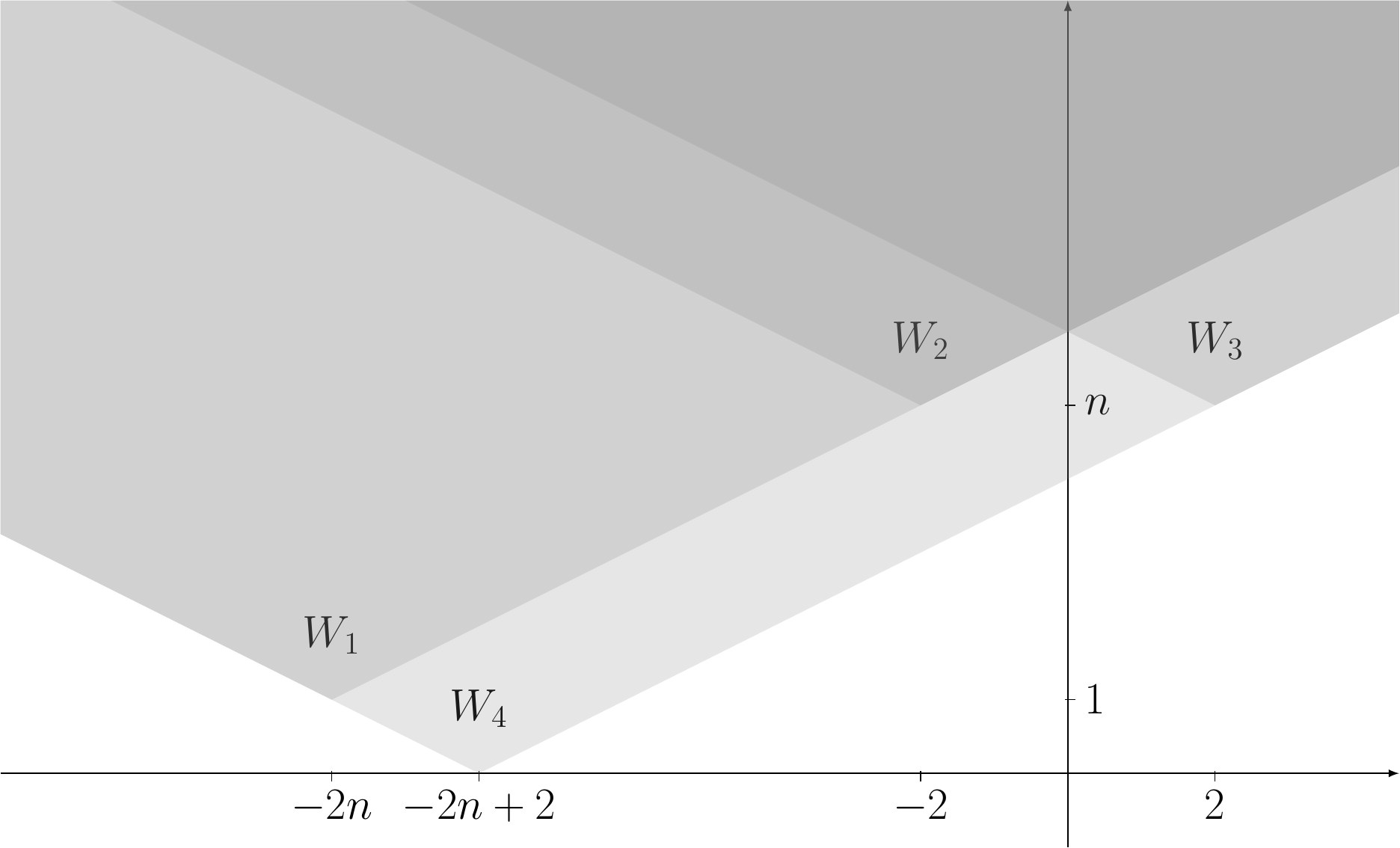}
  \caption{Schematic picture of the $(e,h)$--graph of the regions $W_1, W_2, W_3$ and $W_4$ for the knot $T(2,n)$.}
  \label{T(2,n) grid}
\end{figure}

We have that $\sigma (T(2,n))=-(n-1)$. Thus, Theorem \ref{sigma wedge} rules out all pairs outside of the region 
$$W_4 = \left\{ (e,h) : \left|-(n-1)-\frac{e}{2}\right|\leq h\right\}.$$
In other words, all unknown points lie in $U := (W_1\cup W_3)^c \cap W_4$; there are exactly $n-1$ unknown points and all of the unknown points lie on the line $h=\frac{e}{2}+(n-1)$.

Finally, we attempt to rule out pairs by applying Theorem \ref{klein obstruction}.  First, from Theorem \ref{sign(W(F))}, we see that for a surface $F$ given by a point $(e,h)=\left(e, \frac{e}{2}+(n-1)\right) \in U$,  Sign$(\Sigma (F)) = -h$.  Therefore, Lemma \ref{massey lem} reveals that all of the unknown points correspond to surfaces $F$ with $\Sigma (F)$ negative definite.  Consider the point $(-2n+6,2)$ corresponding to a punctured Klein bottle.  

Since $$\Delta_{T(2,n)}(t) = 1-t+\cdots -t^{n-2}+t^{n-1},$$ $\textrm{Arf}(T(2,n)) = 0$ if and only if $n\equiv \pm 1 \text{ (mod 8)}$. Consider the following table of computations:
\begin{center}
\begin{tabular}{c|c|c|c|c}
$n$ (mod 8) &$\sigma$ & Arf & $\sigma +4$Arf & (mod 8)\\
\hline
1&$-(n-1)$&0&$-n+1$&0\\
3&$-(n-1)$&1&$-n+5$&2\\
5&$-(n-1)$&1&$-n+5$&0\\
7&$-(n-1)$&0&$-n+1$&2\\
\end{tabular}
\end{center}
While Theorem \ref{klein obstruction} provides no obstruction for $n\equiv 1$ (mod 4), we can rule out the pair $(e,h)=(6-2n, 2)$ (and so also $(4-2n,1)$) for $n\equiv 3$ (mod 4).  This proves Theorem \ref{T(2,n)}.
\end{proof}


\section{Proof of Theorem \ref{T(3,n)}}
Before proving Theorem \ref{T(3,n)}, we compute the value of the knot signature and of the Upsilon invariant at $t=1$ for the family of torus knots $T(3,n)$.

\begin{lem} \label{signature}  For the torus knot $T(3,n)$,
\begin{equation*}
    \sigma(T(3,n)) = \left\{\begin{array}{ll}
	   \frac{-4n+4}{3}&\text{ if } n\equiv 1 \text{ (mod 6)}\\
        \frac{-4n+2}{3}&\text{ if } n\equiv 2 \text{ (mod 6)}\\
        \frac{-4n-2}{3}&\text{ if } n\equiv 4 \text{ (mod 6)}\\
        \frac{-4n-4}{3}&\text{ if } n\equiv 5 \text{ (mod 6)}.\\
        \end{array} \right.
\end{equation*}
\end{lem}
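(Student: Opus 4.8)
The plan is to prove the four cases simultaneously by induction on $n$, using the classical recursion for signatures of torus knots. The key input is Murasugi's recursion: for $p$ odd, $\sigma(T(p,q+2p)) = \sigma(T(p,q)) - (p^2-1)$ (see \cite{murasugi}). Specializing to $p=3$ gives
$$\sigma(T(3,n+6)) = \sigma(T(3,n)) - 8$$
for all $n\geq 1$ with $3\nmid n$. Since the residues mod $6$ that are coprime to $3$ are precisely $1,2,4,5$ (and $n\equiv 0$ or $3 \pmod 6$ yields a two-component link rather than a knot), this recursion reduces the computation to the four base cases $n\in\{1,2,4,5\}$ --- one in each relevant residue class --- with every larger admissible $n$ reached by repeatedly adding $6$.

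First I would handle the base cases. The knot $T(3,1)$ is the unknot, so $\sigma(T(3,1)) = 0 = \frac{-4(1)+4}{3}$. The knot $T(3,2)$ is $T(2,3)$, the right-handed trefoil, so $\sigma(T(3,2)) = -2 = \frac{-4(2)+2}{3}$ (consistent with $\sigma(T(2,n)) = -(n-1)$). For the remaining two cases, $T(3,4)$ and $T(3,5)$ (the torus knots $8_{19}$ and $10_{124}$), I would compute the signature directly from a Seifert matrix of the genus-$(n-1)$ fiber surface --- a routine finite computation, or one can read the values off standard knot tables --- obtaining $\sigma(T(3,4)) = -6 = \frac{-4(4)-2}{3}$ and $\sigma(T(3,5)) = -8 = \frac{-4(5)-4}{3}$. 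Thus the claimed formula holds for $n\in\{1,2,4,5\}$.

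For the inductive step, fix $n\geq 7$ with $3\nmid n$; then $n-6\geq 1$, $3\nmid(n-6)$, and $n-6\equiv n \pmod 6$, so by the inductive hypothesis the formula holds for $n-6$. In each residue class the proposed value has the form $\sigma(T(3,n)) = \frac{-4n+c}{3}$ with $c$ depending only on $n \bmod 6$, and one checks immediately that $\frac{-4n+c}{3} - \frac{-4(n-6)+c}{3} = \frac{-24}{3} = -8$, which is exactly the Murasugi recursion. Hence the formula for $n-6$ together with the recursion gives the formula for $n$, completing the induction.

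I expect the only real obstacle to be bookkeeping: confirming the recursion and the four base values in the paper's sign convention (positive torus knots carrying negative signature), and carrying out the two small Seifert-matrix computations. An alternative route, avoiding the recursion, is to invoke the closed formula expressing $\sigma(T(p,q))$ as a signed count of lattice points $(a,b)$ with $1\leq a\leq p-1$ and $1\leq b\leq q-1$, and to count these directly when $p=3$; but the recursive approach keeps the arithmetic lightest.
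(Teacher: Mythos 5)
Your proof is correct and follows essentially the same route as the paper: the paper also reduces to the base cases $n\in\{1,2,4,5\}$ via the recursion $\sigma(T(3,n+6)) = \sigma(T(3,n)) - 8$ (citing Gordon--Litherland--Murasugi rather than Murasugi directly) and then substitutes the same base values $0,-2,-6,-8$. The only cosmetic difference is that you frame the descent as a formal induction while the paper telescopes the recursion in one line.
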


\begin{proof}
We use the recursive formulas of Gordon, Litherland, and Murasugi \cite{gordon-litherland-murasugi}.  Consider $T(3,n)$ where $n=6k+d$ with $k\geq 1$ and $d\in\{1, 2, 4, 5\}$.  Then 
$$\sigma(T(3,n)) = \sigma(T(3, 6(k-1)+d))-8 = \sigma(T(3, 6(k-2)+d))-16 = \cdots = \sigma(T(3,d))-8k.$$
So, since $\sigma(T(3,1)) = 0, \;\sigma(T(3,2)) = -2,\; \sigma(T(3,4)) = -6, \text{ and } \sigma(T(3,5)) = -8,$ we have
$$\sigma(T(3, 6k+1)) = -8k,\; \sigma(3, 6k+2) = -2-8k,$$ $$ \sigma(3, 6k+3) = -6-8k, \text{ and } \sigma(T(3, 6k+5)) = -8-8k.$$
Substituting $k = (n-d)/6$ proves the lemma.
\end{proof}
\begin{lem} \label{upsilon}
For the torus knot $T(3,n)$,
\begin{equation*}
    \U_{T(3,n)}(1) = \left\{\begin{array}{ll}
	   \frac{-2n+2}{3}&\text{ if } n\equiv 1 \text{ (mod 3)}\\
        \frac{-2n+1}{3}&\text{ if } n\equiv 2 \text{ (mod 3)}.\\
        \end{array} \right.
\end{equation*}
\end{lem}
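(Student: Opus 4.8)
The plan is to compute $\U_{T(3,n)}(1)$ using the known structure of the Upsilon invariant for torus knots. There are two standard routes, and I would choose the one that parallels the signature computation most closely, namely the recursive/additive approach via cobordisms.

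First I would recall that $\U_K(t)$ is a concordance invariant satisfying $\U_{K \cs J}(t) = \U_K(t) + \U_J(t)$, and that under an unoriented or oriented band move relating two knots there is a controlled change. More precisely, I would use the fact that crossing changes and the genus-$1$ cobordisms coming from the standard torus-knot relations give a recursion. The key input is that, just as $\sigma(T(3,n)) = \sigma(T(3,n-6)) - 8$ from Gordon--Litherland--Murasugi, there should be an analogous recursion $\U_{T(3,n)}(1) = \U_{T(3,n-3)}(1) - c$ for an appropriate constant $c$ depending on the residue. Alternatively, since $T(3,n)$ is an L-space knot (it is a positive torus knot), $\U_{T(3,n)}(t)$ is completely determined by the Alexander polynomial, or equivalently by the semigroup generated by $3$ and $n$; at $t = 1$ there is a clean closed formula. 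I would set up the computation by writing $n = 3k + d$ with $d \in \{1,2\}$, and evaluate $\U_{T(3,n)}(1)$ either from the staircase complex of the L-space knot or from the formula $\U_{T(p,q)}(1)$ in terms of lattice-point counts. Either way, the answer should come out to a quadratic-in-$k$ expression that simplifies to the stated linear-in-$n$ values.

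Concretely, the steps in order: (1) State the relevant property of $\U$ for L-space knots (or the recursion under saddle moves), citing \cite{oss1}; (2) establish base cases $\U_{T(3,1)}(1) = 0$ (unknot) and $\U_{T(3,2)}(1)$ (the trefoil, for which $\U(1) = \sigma/2 = -1$, consistent with the alternating-knot property); (3) establish the recursion relating $\U_{T(3,n)}(1)$ to $\U_{T(3,n-3)}(1)$, including identifying the constant by a direct small computation (e.g. comparing $T(3,2)$ to $T(3,5)$, using that $T(3,5) = T(5,3)$ has a known Upsilon); (4) solve the recursion and substitute $k = (n-d)/3$ to obtain the two cases. I would double-check the final formula against the known value for the $(3,4)$- and $(3,5)$-torus knots and against Corollary~\ref{OSS bound} for internal consistency.

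The main obstacle I expect is pinning down the exact constant in the recursion and handling the residues correctly: the jump in $\U(1)$ when passing from $T(3,n)$ to $T(3,n+3)$ need not be uniform across the two cases $d=1$ and $d=2$, and an off-by-one or sign error here would be easy to make. The cleanest safeguard is to compute $\U_{T(3,n)}(1)$ directly from the staircase (L-space knot) model for a few values of $n$ --- say $n = 1,2,4,5,7,8$ --- verify the pattern $\frac{-2n+2}{3}$ and $\frac{-2n+1}{3}$, and only then assert the general formula, with the recursion providing the inductive step. A secondary subtlety is that the problem only asks for the value at $t=1$, so I should resist computing the whole piecewise-linear function and instead use whatever shortcut (semigroup counting, or the identity $\U_K(1) = -2\,\Upsilon$-slope data at $t=1$) gives the point value most directly.
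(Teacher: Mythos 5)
Your primary plan --- establish small base cases, find a recursion that drops $n$ by $3$, solve, and substitute $k=(n-d)/3$ --- is exactly the paper's argument. The recursion you anticipate is the Feller--Krcatovich identity $\U_{T(a,b)}(t) = \U_{T(a,b-a)}(t) + \U_{T(a,a+1)}(t)$, which at $a=3$ and $t=1$ gives $\U_{T(3,n)}(1) = \U_{T(3,n-3)}(1) + \U_{T(3,4)}(1) = \U_{T(3,n-3)}(1) - 2$. In particular the constant is the same for both residues, so the non-uniformity you flagged as a potential obstacle does not actually arise.

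The one real gap is in how you propose to \emph{justify} that recursion. Comparing $T(3,2)$ to $T(3,5)$, or verifying a handful of values from the staircase model, establishes the recursion only in those cases; it does not prove that a clean linear recursion holds for all $n$, and there is no elementary saddle-move or cobordism argument supplying it. The recursion is a theorem of Feller and Krcatovich \cite{feller-krcatovich} (quoted in the paper as Theorem~\ref{upsilon recursive}), not something found in \cite{oss1}, so the citation in your step (1) would be wrong. Your L-space-knot alternative is a legitimate independent route, since positive torus knots are L-space knots and $\U$ is determined by the semigroup, but to make it a proof you would need an actual closed-form expression for $\U_{T(3,n)}(1)$ in $n$ from the staircase or lattice-point data; checking a finite list of small $n$ and asserting the pattern would again fall short.
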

To prove the lemma, we use the following recursive formula given by Feller and Krcatovich \cite{feller-krcatovich}.
\begin{thm}[ \cite{feller-krcatovich}] \label{upsilon recursive}
Let $a<b$ be two coprime positive integers.  Then $$\U_{T(a,b)}(t) = \U_{T(a, b-a)}(t) + \U_{T(a, a+1)}(t).$$
\end{thm}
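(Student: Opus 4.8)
The plan is to deduce the recursion from the structure of the knot Floer complex, exploiting that torus knots are L-space knots. For an L-space knot $K$ the complex $\cfk^\infty(K)$ is a staircase complex, and its shape --- equivalently the function $\U_K(t)$ on $[0,2]$ --- is determined by the symmetrized Alexander polynomial, hence (for $K=T(a,b)$) by the numerical semigroup $S_{a,b}=\{ia+jb : i,j\in\Z_{\ge 0}\}$. So the first step is to record the formula, in the form due to Borodzik and Hedden (and implicit in the Ozsv\'ath--Stipsicz--Szab\'o staircase), expressing $\U_{T(a,b)}(t)$ as a piecewise-linear function of $t$ whose slopes and breakpoints are governed by the gap-counting function $m \mapsto \#(S_{a,b}\cap[0,m))$; concretely, $\U_{T(a,b)}$ is a Legendre-type transform of the ``$h$-function'' built from this counting function. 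As a warm-up I would also write down the completely explicit case $S_{a,a+1}$, whose gap function is elementary, recovering the known closed form for $\U_{T(a,a+1)}(t)$.

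The heart of the argument is a semigroup identity relating $S_{a,b}$, $S_{a,b-a}$, and $S_{a,a+1}$. The key observation is the substitution $ia+jb \mapsto ia+j(b-a)=(i-j)a+jb$, which matches representations over $\{a,b\}$ with representations over $\{a,b-a\}$ and lets the two gap functions be compared by a controlled shift. Tracking this carefully, one finds that the $h$-function of $S_{a,b}$ is the infimal convolution of the $h$-functions of $S_{a,b-a}$ and $S_{a,a+1}$ --- and infimal convolution is precisely the operation that becomes addition under the Legendre transform appearing in the formula from the first step. Feeding this decomposition through that formula yields $\U_{T(a,b)}(t)=\U_{T(a,b-a)}(t)+\U_{T(a,a+1)}(t)$. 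I would then dispose of the degenerate inputs: if $b-a=1$ then $T(a,b-a)$ is the unknot, so $\U_{T(a,b-a)}\equiv 0$ and indeed $b=a+1$; if $b-a<a$ one uses $T(a,b-a)=T(b-a,a)$; and in every case $\gcd(a,b-a)=\gcd(a,b)=1$ and $b-a\ge 1$, so all three knots are bona fide torus knots and the formula from the first step applies.

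The step I expect to be the main obstacle is making the $h$-function identity precise and then checking that the resulting equality of piecewise-linear functions holds at every breakpoint, not merely on the interior of each linear piece; this demands care with the staircase indexing conventions and with the Legendre transform, and it is genuinely a coincidence special to $\U$ (for instance $\sigma(T(a,b))\ne \sigma(T(a,b-a))+\sigma(T(a,a+1))$ in general, so the underlying complexes really do differ). An alternative route that trades the semigroup bookkeeping for complex-level bookkeeping is to compare staircases directly: write the sequence of step vectors of $\cfk^\infty(T(a,b))$ as an interleaving of those of $\cfk^\infty(T(a,b-a))$ and $\cfk^\infty(T(a,a+1))$, and verify that Livingston's ``corner'' description of $\U$ --- the maximum over the distinguished staircase generators of an affine functional that is linear in $t$ --- is additive under this interleaving. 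I would adopt whichever formulation makes the final breakpoint-matching cleanest.
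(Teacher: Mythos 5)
This statement is not proved in the paper at all: it is quoted verbatim from Feller--Krcatovich as an external input, so there is no internal argument to compare against; the relevant comparison is with the cited source, whose proof also runs through the semigroup description of $\Upsilon$ for torus knots (as L-space knots), so your overall strategy is the standard one rather than a new route. The scaffolding you invoke is sound: the Legendre-transform description of $\Upsilon$ for L-space knots in terms of the semigroup counting ($h$-) function is a genuine theorem (Borodzik--Hedden), the Fenchel conjugate does turn infimal convolution into addition without any convexity hypothesis, and your handling of the degenerate cases ($b-a=1$, $b-a<a$, coprimality of $(a,b-a)$) is fine.

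The gap is exactly where you predicted it, and it is a real gap rather than a routine verification: the claimed identity that the $h$-function of $\langle a,b\rangle$ is the infimal convolution of the $h$-functions of $\langle a,b-a\rangle$ and $\langle a,a+1\rangle$ carries all of the content of the theorem, and the substitution $ia+j(b-a)=(i-j)a+jb$ does not establish it --- the coefficient $i-j$ can be negative, so this map is not a correspondence between semigroup representations, and one must additionally reconcile the truncation at the genus/conductor and the index shift between ``gaps above $m$'' and the $h$-function conventions entering the Legendre-transform formula. Until that combinatorial lemma is proved (or replaced by the staircase-interleaving argument you mention, which faces the same bookkeeping), the proposal is a plausible plan, consistent with small cases such as $(a,b)=(3,5)$ and $(3,7)$, but not a proof. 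Note also that Feller--Krcatovich's published argument sidesteps the inf-convolution formulation by an inductive comparison of $S_{a,b}$ with $S_{a,b-a}$ via the max-type formula for $\Upsilon_{T(a,b)}$, so if you want a complete write-up the most economical fix is either to prove your semigroup identity directly or to follow their induction.
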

\begin{proof}[Proof of Lemma \ref{upsilon}]
 Consider $T(3,n)$ where $n=3k+d$ with $k\geq 1$ and $d=1$ or $2$.  Then, by Theorem \ref{upsilon recursive}, 
 $$\U_{T(3,n)}(t) = \U_{T(3, 3(k-1)+d)}(t) + \U_{T(3,4)}(t)= \U_{T(3, 3(k-2)+d)}(t) + 2\U_{T(3,4)}(t)$$ $$=\cdots = \U_{T(3, d)}(t) + k\U_{T(3,4)}(t).$$
 Since $\U_{T(3,1)}(1) = 0$, $\U_{T(3,2)}(1) = -1$, and $\U_{T(3,4)}(1) = -2$, we have
 $$\U_{T(3, 3k+1)}(1) = -2k \text{ and } \U_{T(3, 3k+2)}(1) = -1-2k.$$
 Substituting $k = (n-d)/3$ proves the lemma.
\end{proof}

\begin{proof}[Proof of Theorem \ref{T(3,n)}]
In Section \ref{nonori 4-g}, we showed that $\gamma_4(T(3,n))=1$.  We compute the Euler number for this M\"obius band using the method in Example \ref{e from bands}.  For the torus knot $T(3, 3k+1)$, begin with a disk bounded by the unknot and take a $0$-framed push-off of the unknot, as in Figure \ref{1(mod 3)}(a).  Add a band to the disk as shown in Figure \ref{1(mod 3)}(b).  The resulting knot $K$ and the push-off $K'$ (shown as a dotted line in Figure \ref{1(mod 3)}(c)) trace parallel surfaces in $B^4$ with intersection count $-\text{lk}(K, K') = \frac{-8n+2}{3}$.  Thus, $T(3, 3k+1)$ bounds a M\"obius band in $B^4$ with normal Euler number $\frac{-8n+2}{3}$.  The same process shows that $T(3, 3k+2)$ bounds a M\"obius band in $B^4$ with normal Euler number $\frac{-8n-2}{3}$.

\begin{figure}[ht]
\centering
\begin{tabular}{ccc}
 \hspace{1cm}\includegraphics[width = 0.2\textwidth]{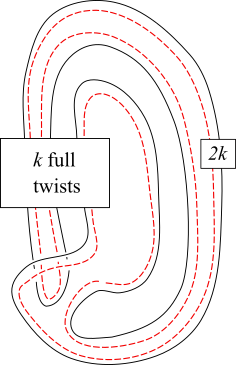} \hspace{1cm} & \includegraphics[width = 0.2\textwidth]{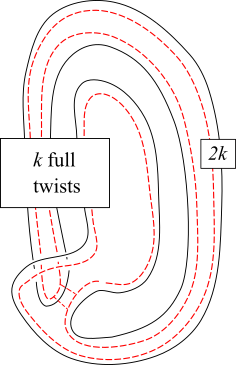} \hspace{1cm} & \includegraphics[width = 0.2\textwidth]{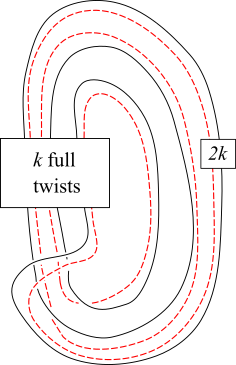} \hspace{1cm}\\
(a) & (b) & (c) \\
0-framed unknot & band move \hfill & $T(3,3k+1)$ with push-off\\
\end{tabular}
\caption{}
\label{1(mod 3)}
\end{figure}

 Let $e_0$ denote $e(F)$ for the surface described above. In other words, let
$$e_0 = \left\{\begin{array}{ll}
	\frac{-8n+2}{3} & n\equiv 1 \text{ (mod 3)}\\
	\frac{-8n-2}{3} & n\equiv 2 \text{ (mod 3)}.\\
\end{array}\right.$$ 
Then, in the $(e,h)$--graph for $T(3,n)$, this yields an initial $(e,h)$ pair of $(e_0,1)$.  Since $g_4(T(3,n)) = n-1$, there exist nonorientable surfaces with boundary $T(3,n)$ and $(e,h)$ equal to $(\pm 2, 2n-1)$.  By forming connected sums with $P^2$, we get wedges $W_1, W_2, W_3$ of realizable pairs starting at all three of these initial realizable points:
$$W_1 = \left\{ (e,h) : \left|\frac{e_0}{2}-\frac{e}{2}\right|+1\leq h\right\},$$
$$W_2 = \left\{ (e,h) : \left|-1-\frac{e}{2}\right|+2n-1\leq h\right\},$$
and
$$W_3 = \left\{ (e,h) : \left|1-\frac{e}{2}\right|+2n-1\leq h\right\}.$$
Notice that $W_2, W_3\subset W_1$.  See Figure \ref{T(3,n) grid 1} for schematic pictures of the $(e,h)$--graphs of the regions.

\begin{figure}[ht]
  \centering
  \captionsetup{justification = centering}
  \includegraphics[width = 0.4\textwidth]{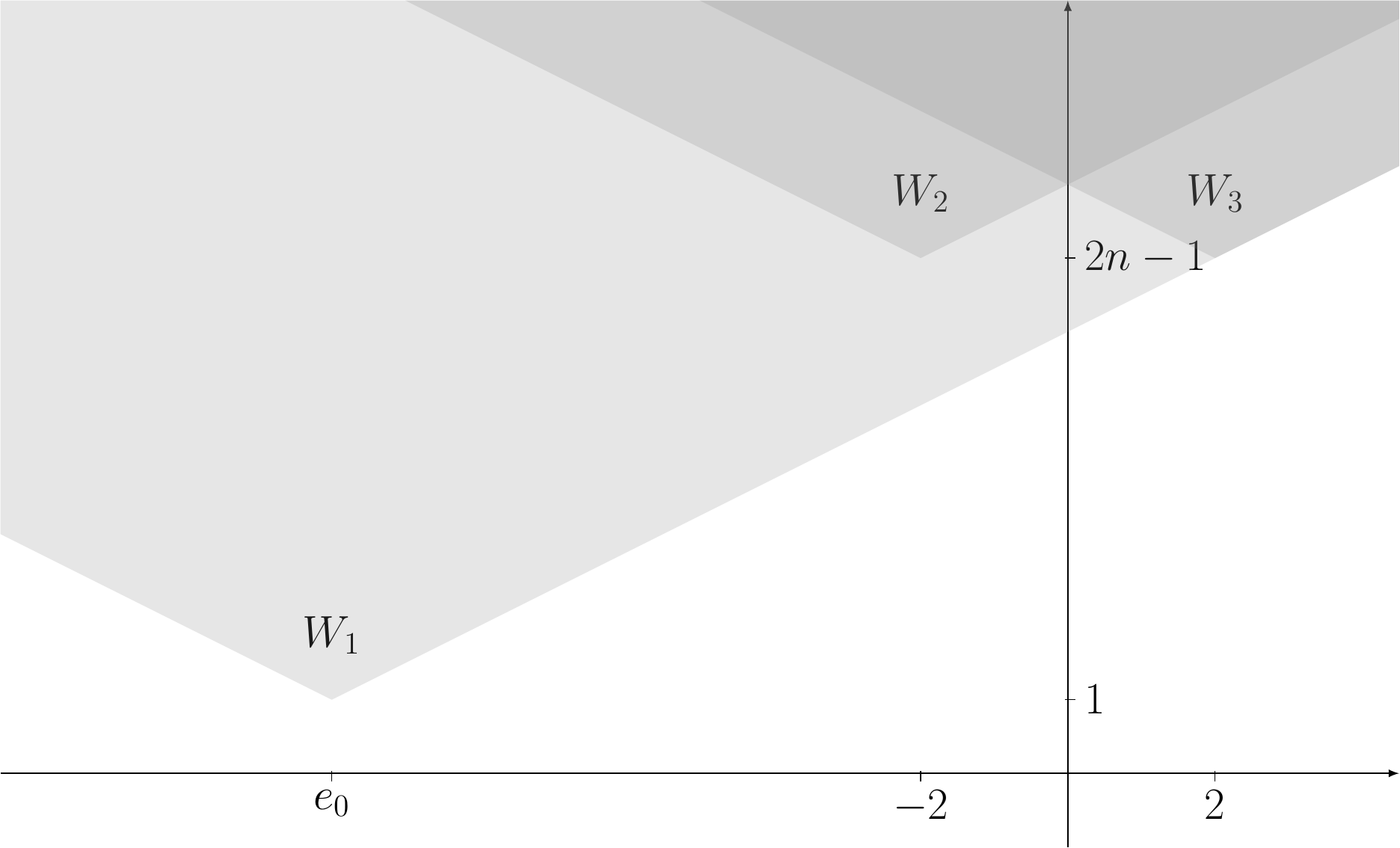}
  \caption{Schematic picture of the $(e,h)$--graph of the regions $W_1, W_2$ and $W_3$ for the knot $T(3,n)$.}
  \label{T(3,n) grid 1}
\end{figure}

Next, we determine which $(e,h)$ pairs can be ruled out using the knot signature and the Upsilon invariant.  We apply Lemmas \ref{signature} and \ref{upsilon} to compute that
\begin{equation*}
    \sigma(T(3,n)) - 2\U_{T(3,n)}(1) = \left\{\begin{array}{ll}
	   0&\text{ if } n\equiv 1  \text{ or 2 (mod 6)}\\
        -2&\text{ if } n\equiv 4 \text{ or 5 (mod 6).}\\
        \end{array} \right.
\end{equation*}

The combination of Corollaries \ref{sigma wedge} and \ref{OSS cor} implies that all realizable pairs lie in the intersection of the regions
\[R_1:=\left\{(e,h): \left|\sigma(K)-\frac{e}{2}\right|\leq h \right\} \text{ and } R_2:=\left\{(e,h): \left|-2\U_K(1)+\frac{e}{2}\right|\leq h\right\}.\]
See Figures \ref{T(3,n) grid 2b} and \ref{T(3,n) grid 2a} for the schematic pictures of the $(e,h)$--graphs for the knot $T(3,n)$ including regions $R_1$ and $R_2$. 
\begin{figure}[ht]
  \centering
  \captionsetup{justification = centering}
  \includegraphics[width = 0.4\textwidth]{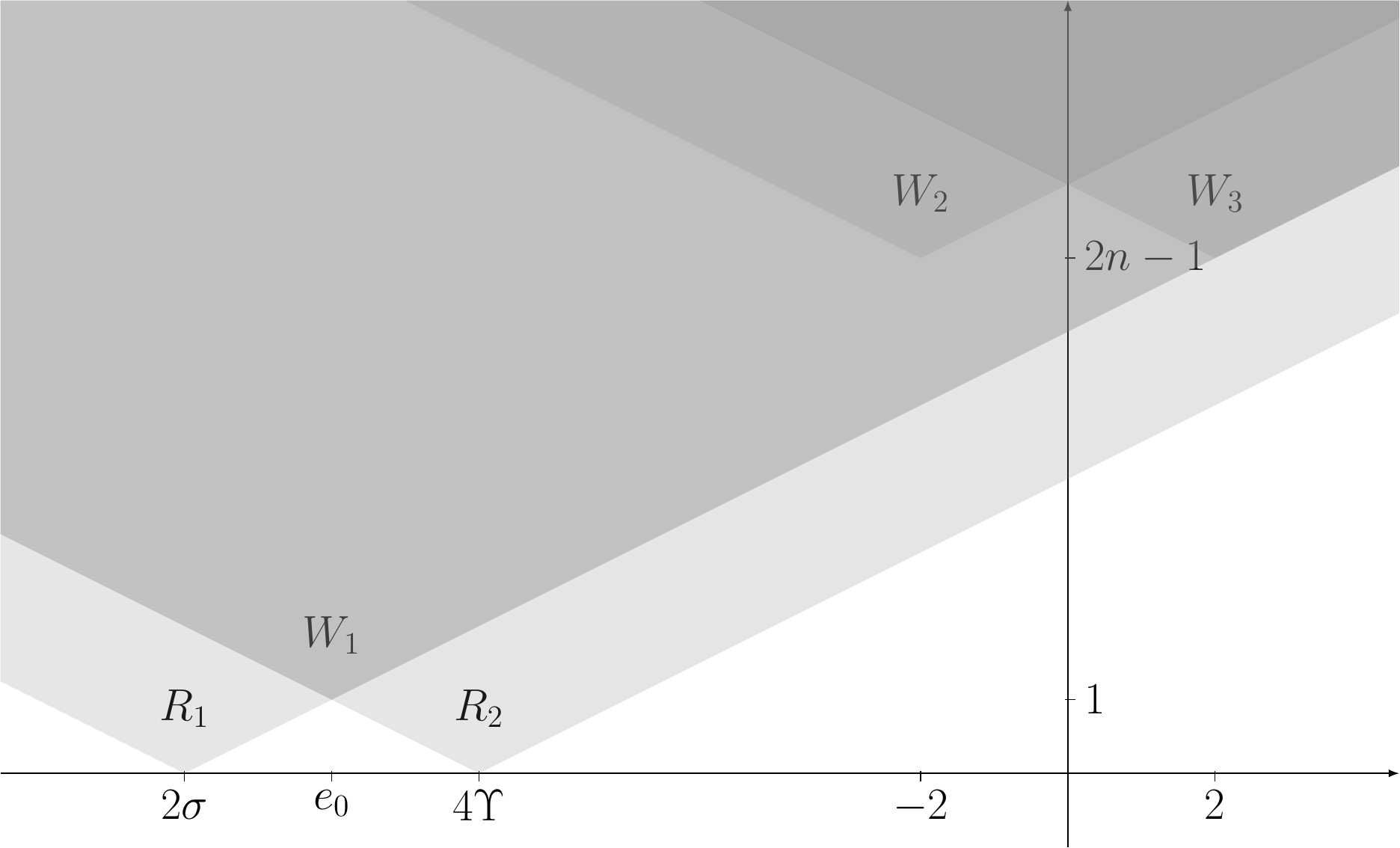}
  \caption{Schematic picture of the $(e,h)$--graph of the regions $W_1, W_2, W_3, R_1$ and $R_2$ for the knot $T(3,n)$ with $n\equiv 4$ or $5$ (mod 6).}
  \label{T(3,n) grid 2b}
\end{figure}

\noindent  For $n\equiv 4 \text{ or 5 (mod 6)}$, $R_1\cap R_2 = W_1$, so the realizable pairs are exactly those in $W_1$ (excluding those pairs for which $e \not\equiv 2h $ (mod 4)).  For $n\equiv 1 \text{ or 2 (mod 6)}$, $R_1 =R_2$ so the remaining unknown pairs are in $U = W_1^c\cap R_1$.  The points in $U$ lie along a single line. For $n\equiv 1 \text{ (mod 6)}$,
$$U \subset \left\{(e,h): h = \frac{e}{2}+\frac{4n-4}{3}\right\}.$$ 
For $n\equiv 2 \text{ (mod 6)}$,
$$U \subset \left\{(e,h): h = \frac{e}{2}+\frac{4n-2}{3}\right\}.$$
\begin{figure}[ht]
  \centering
  \captionsetup{justification = centering}
  \includegraphics[width = 0.4\textwidth]{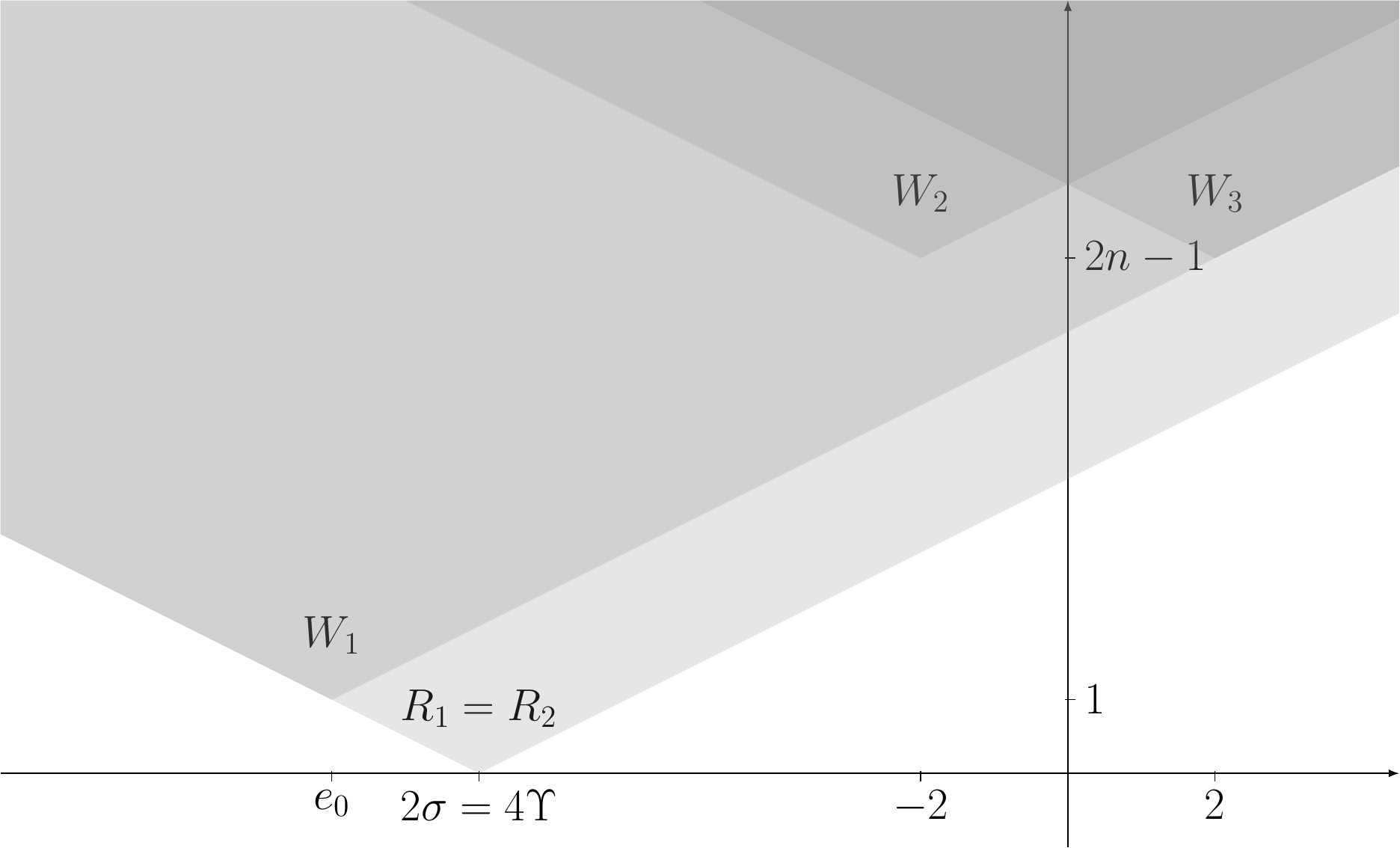}
  \caption{Schematic picture of the $(e,h)$--graph of the regions $W_1, W_2, W_3,$ and $R_1 = R_2$ for the knot $T(3,n)$ with $n\equiv 1$ or $2$ (mod 6).}
  \label{T(3,n) grid 2a}
\end{figure}

Finally, we attempt to rule out the punctured Klein bottle in each $U$ by computing:
\begin{center}
\begin{tabular}{c|c|c|c|c}
$n$ (mod 6) &$\sigma$ & Arf & $\sigma +4$Arf & (mod 8)\\
\hline
1&$\frac{-4n+4}{3}$&0&$\frac{-4n+4}{3}$&0\\
2&$\frac{-4n+2}{3}$&1&$\frac{-4n+2}{3}+4$&2\\
\end{tabular}
\end{center}
Thus Theorem \ref{klein obstruction} does not rule out points for $n\equiv 1 \text{ (mod 6)}$, but the theorem may be able to rule out points for $n\equiv 2 \text{ (mod 6)}$.   For $n\equiv 2 \text{ (mod 6)}$, there is one unknown point which corresponds to a punctured Klein bottle: $\left(\frac{-8n+4}{3}+4,2\right).$  Computations reveal that this point corresponds to a surface $F$ such that $\Sigma (F)$ is negative definite and so it (and, as a consequence, $(\frac{-8n+4}{3}+2,1)$) can be ruled out by Theorem \ref{klein obstruction}.  Thus the family of unknown points for $n\equiv 2 \text{ (mod 6)}$ is reduced by two, although still infinite.
\end{proof}


\section{Application of the $d$--invariant to the geography problem}

Let $K\subset S^3$ be a knot and $F \subset B^4$ be a nonorientable surface with $\partial F = K$.  Denote by $\Sigma (K)$ the two-fold branched cover of $S^3$ branched over $K$ and by $\Sigma (F)$ the two-fold branched cover of $B^4$ branched over $F$.  In the previous section, we noticed that all of the unknown points described in Theorems \ref{T(2,n)} and \ref{T(3,n)} correspond to surfaces $F$ with $\Sigma (F)$ negative definite.  This leads us to consider ruling out the existence of negative definite 4--manifolds with boundary $\Sigma (K)$, where $K$ is a knot.   Here, we use the $d$--invariant to do so for an infinite family of knots.  (We note that all torus knots have two--fold branched covers which bound negative definite manifolds (see \cite{neumann-raymond}), so we cannot apply the following strategy directly to individual torus knots.)  

In \cite{os1}, Ozsv\'ath and Szab\'o introduced the $d$--invariant.  The $d$--invariant associates to a rational homology sphere $Y$ with Spin$^\text{c}$--structure $\t$, a rational number denoted $d(Y,\t)$. Here we list some useful facts of the $d$--invariant.

\begin{thm}[[Ozsv\'{a}th and Szab\'{o}, \cite{os1}]
$ $
\begin{itemize}
\item If $Y$ is a closed, oriented 3--manifold and $\s \in$ Spin$^\text{c}(Y)$, then $d(Y, \s) = -d(-Y,\s)$.
\item If $(Y,\s)$ and $(Z, \t)$ are rational homology 3--spheres equipped with Spin$^\text{c}$--structures, then $d(Y\# Z, \s\#\t) = d(Y,\s) + d(Z, \t)$.
\end{itemize}
\end{thm}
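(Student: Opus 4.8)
\emph{Proof strategy.}  Both statements are structural facts about the Heegaard Floer groups $HF^\infty$, $HF^+$, $HF^-$ proved in \cite{os1}, so the plan is to deduce each from the module structure of these groups together with the duality and Künneth properties established there; no new geometric input is needed. First I would recall the relevant structure: for a rational homology sphere $Y$ with $\mathrm{Spin}^c$--structure $\s$, the group $HF^\infty(Y,\s)$ is a single copy of $\F[U,U^{-1}]$ over $\F[U]$, and $d(Y,\s)$ is the minimal grading of a nonzero homogeneous element in the image of the map $HF^\infty(Y,\s)\to HF^+(Y,\s)$; equivalently, writing $HF^+(Y,\s)\cong\mathcal{T}^+\oplus HF^+_{\mathrm{red}}(Y,\s)$ with $\mathcal{T}^+=\F[U,U^{-1}]/U\cdot\F[U]$, the invariant $d(Y,\s)$ is the grading of the bottom element of the tower $\mathcal{T}^+$. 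Thus in both parts the real content is to locate the bottom of that tower.

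For the orientation-reversal identity, I would invoke the duality isomorphism of \cite{os1}: up to a fixed grading shift, $HF^+_*(Y,\s)$ is dual to $HF^-_{-*}(-Y,\s)$, and this identification carries $HF^\infty$ to $HF^\infty$. Feeding this into the exact sequence $\cdots\to HF^-_*(-Y,\s)\to HF^\infty_*(-Y,\s)\to HF^+_*(-Y,\s)\to\cdots$ identifies the bottom of the $HF^+$--tower of $Y$ with the top of the $HF^-$--tower of $-Y$; since the absolute $\Q$--grading in \cite{os1} is normalized so that orientation reversal negates it, reading off the two degrees gives $d(Y,\s)=-d(-Y,\s)$. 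Conceptually this is nothing more than turning the relevant cobordism around.

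For additivity under connected sum, I would use the Künneth formula of \cite{os1}: there is a grading-compatible chain homotopy equivalence $CF^\infty(Y\#Z,\s\#\t)\simeq CF^\infty(Y,\s)\otimes_{\F[U,U^{-1}]}CF^\infty(Z,\t)$, with a companion statement at the level of $CF^{\le 0}$ (hence $CF^-$ and $CF^+$) compatible with the maps to $CF^+$. Since $Y$ and $Z$ are rational homology spheres, each $HF^\infty$ is a single $\F[U,U^{-1}]$, so the tensor product is again a single $\F[U,U^{-1}]$ whose homogeneous generator has grading $d(Y,\s)+d(Z,\t)$ by compatibility of the Künneth equivalence with the absolute grading. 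Transporting this generator to $HF^+(Y\#Z,\s\#\t)$ exhibits it as the bottom of the tower there, yielding $d(Y\#Z,\s\#\t)=d(Y,\s)+d(Z,\t)$.

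I expect the main obstacle in both parts to be the grading bookkeeping rather than anything conceptual. In the connected-sum case one must check that the $\mathrm{Tor}$ terms occurring in the Künneth spectral sequence land in $HF^+_{\mathrm{red}}$, so that they cannot perturb the location of the bottom of the tower; in the orientation-reversal case one must verify that the duality pairing is grading-reversing with precisely the shift used to pin down the absolute $\Q$--grading. Both are routine once the conventions of \cite{os1} are set up carefully.
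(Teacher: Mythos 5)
The paper does not prove this theorem; it is quoted from Ozsv\'ath and Szab\'o \cite{os1} and treated as background, so there is no in-paper argument to compare yours against. Your outline nonetheless matches the strategy of the proofs in \cite{os1}: orientation reversal follows from the grading-reversing duality between $HF^+(Y,\s)$ and $HF^-(-Y,\s)$ (with the shift pinned down by the normalization of the absolute $\Q$--grading), and additivity under connected sum follows from the K\"unneth formula for $CF^-$ applied to the free $\F[U]$--summand. One imprecision worth fixing: $HF^\infty(Y,\s)\cong\F[U,U^{-1}]$ has no distinguished homogeneous generator, so the K\"unneth argument must be carried out at the level of $HF^-$ (or $CF^{\leq 0}$), where the free summand does have a top class whose grading records $d(Y,\s)$ up to a fixed overall shift, and where the Tor term is visibly $U$--torsion and hence contributes only to $HF^+_{\mathrm{red}}$, exactly the point you flag at the end. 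Modulo that rewording, the sketch is sound.
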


Consider the following theorem.
\begin{thm}[Ozsv\'{a}th and Szab\'{o}, \cite{os1}] \label{os thm}
Let $Y$ be a rational homology 3--sphere and let $\t\in$ Spin$^\text{c}(Y)$.  Let $X$ be a smooth, negative definite 4--manifold with $\partial X = Y$ and let $\s\in$ Spin$^\text{c}(X)$ with $\s |_Y = \t$.  Then
$$c_1(\s)^2+b_2(X)\leq 4d(Y,\t).$$
\end{thm}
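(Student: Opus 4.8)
\emph{Proof plan.} This is the four--manifold inequality of \cite{os1}, and the plan is to recover it by the standard route: realize $X$ as a cobordism from $S^3$ and push the top class of $\hf^-(S^3)$ across it. First I would remove an open ball from the interior of $X$, obtaining a smooth cobordism $W\colon S^3 \to Y$. Since $Y$ is a rational homology sphere and $X$ is connected, $\chi(W) = b_2(X)$, and negative-definiteness gives $b_2^+(W) = 0$ and $\sigma(W) = -b_2(X)$. The Spin$^c$ structure $\s$ restricts to one on $W$, still written $\s$, with $\s|_Y = \t$, and the induced homomorphisms $F^\circ_{W,\s}$ on $\hf^\circ$ for $\circ \in \{-,\infty,+\}$ are homogeneous of grading shift
\[
\frac{c_1(\s)^2 - 2\chi(W) - 3\sigma(W)}{4} \;=\; \frac{c_1(\s)^2 + b_2(X)}{4} \;=:\; \delta ,
\]
and they commute with the natural map $\iota\colon \hf^- \to \hf^\infty$. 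Throughout I would work with $\Q$ coefficients (equivalently, restrict to the case $H_1(X;\Z)=0$, which covers the applications), so that $\hf^\infty$ of a rational homology sphere is free of rank one over $\Q[U,U^{-1}]$.

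The essential input --- and the step I expect to be the main obstacle --- is that $b_2^+(W)=0$ forces $F^\infty_{W,\s}\colon \hf^\infty(S^3)\to \hf^\infty(Y,\t)$ to be nontrivial, and hence, being a $U$-equivariant map of rank-one free $\Q[U,U^{-1}]$-modules over a domain, injective. This is the technical core of \cite{os1}: one chooses a handle decomposition of $W$ and tracks the effect on $\hf^\infty$ through the surgery exact triangles of the two-handles, the point being that without the hypothesis $b_2^+(W)=0$ this map can vanish identically.

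Granting this, the rest is grading bookkeeping. Let $\theta$ be the top-degree generator of $\hf^-(S^3)\cong\Q[U]$; it sits in degree $d(S^3)-2=-2$, and $\iota_{S^3}(\theta)\neq 0$ because $\iota_{S^3}$ is the inclusion $\Q[U]\hookrightarrow\Q[U,U^{-1}]$. Naturality gives
\[
\iota_Y\!\bigl(F^-_{W,\s}(\theta)\bigr) \;=\; F^\infty_{W,\s}\!\bigl(\iota_{S^3}(\theta)\bigr) \;\neq\; 0 ,
\]
so $F^-_{W,\s}(\theta)$ is a homogeneous class of $\hf^-(Y,\t)$ in degree $\delta-2$ that is not annihilated by $\iota_Y$. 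But $\hf^-(Y,\t)\cong\Q[U]\oplus(U\text{-torsion})$ as graded modules, the free summand being the tower $\T^-$ with top degree $d(Y,\t)-2$; since $\hf^\infty$ is $U$-torsion free, $\iota_Y$ kills the torsion and is injective on $\T^-$, so every homogeneous class outside $\ker\iota_Y$ has degree at most $d(Y,\t)-2$. Therefore $\delta-2\le d(Y,\t)-2$, i.e. $\delta\le d(Y,\t)$ --- which is exactly $c_1(\s)^2+b_2(X)\le 4\,d(Y,\t)$.
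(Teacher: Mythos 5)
The paper does not prove this theorem; it is quoted from Ozsv\'ath--Szab\'o \cite{os1}, so there is no in-paper proof to compare against.  Your plan is, however, essentially the original Ozsv\'ath--Szab\'o argument (Theorem~9.6 of that paper): puncture $X$ to a cobordism $W\colon S^3\to Y$, use the grading-shift formula for the cobordism map, and exploit the nontriviality of $F^\infty_{W,\mathfrak{s}}$ when $b_2^+(W)=0$ to compare the degree of $F^-_{W,\mathfrak{s}}(\theta)$ with the top of the tower $\mathcal{T}^-\subset \hf^-(Y,\mathfrak{t})$.  The grading bookkeeping is correct: the top generator of $\hf^-(S^3)$ lives in degree $-2$, a non-$\iota_Y$-torsion homogeneous class in $\hf^-(Y,\mathfrak{t})$ has degree at most $d(Y,\mathfrak{t})-2$, and the shift $\delta=\tfrac14\bigl(c_1(\mathfrak{s})^2+b_2(X)\bigr)$ is what you computed from $\chi(W)=b_2(X)$ and $\sigma(W)=-b_2(X)$.

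Two caveats.  First, and as you yourself flag, the injectivity of $F^\infty_{W,\mathfrak{s}}$ for a cobordism with $b_2^+=0$ is the entire technical content of the theorem; your proposal treats it as a black box, which is appropriate given that it is a deep input (a surgery-exact-triangle induction over a handle decomposition of $W$), but it means the proposal is a reduction of the theorem to its main lemma rather than a self-contained argument.  Second, the reduction to $H_1(X;\Z)=0$ deserves a sentence: it is not equivalent to working over $\Q$, but one can justify it by surgering out generators of $H_1(X)$, which does not change the intersection form, $b_2$, or the boundary, and preserves negative-definiteness; your formula $\chi(W)=b_2(X)$ (equivalently $b_1(X)=b_3(X)=0$ over $\Q$) only holds after this reduction.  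With those two points acknowledged, the plan is sound and matches the standard route.
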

\noindent This has the following corollary, which we will apply to the geography problem.
\begin{cor}[Ozsv\'{a}th and Szab\'{o}, \cite{os1}] \label{os cor} If $Y$ is an integer homology 3--sphere with $d(Y) < 0$, then there is no negative definite 4--manifold X with $\partial X= Y$.
\label{OSS d-inv}
\end{cor}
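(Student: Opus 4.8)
\medskip
\noindent\emph{Proof strategy.}\ \ The plan is to argue by contradiction. Suppose that $X$ is a smooth, compact, negative definite $4$--manifold with $\partial X = Y$, where $Y$ is an integer homology $3$--sphere with $d(Y) < 0$. Since $Y$ is an integer homology sphere it carries a unique \Spinc--structure $\t$, and we abbreviate $d(Y) := d(Y,\t)$. The goal is to exhibit a \Spinc--structure $\s$ on $X$ with $\s|_Y = \t$ satisfying $c_1(\s)^2 + b_2(X) \geq 0$: because $d(Y) < 0$ forces $4 d(Y) < 0$, the existence of such an $\s$ contradicts Theorem~\ref{os thm}, and the corollary follows. (Note that no integrality of $d(Y)$ is needed for this.)

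First I would record the relevant topology of $X$. Using the long exact sequence of the pair $(X,\partial X)$, Poincar\'e--Lefschetz duality, and the fact that $H_*(Y;\Z) \cong H_*(S^3;\Z)$, one checks that the intersection form $Q_X$ on $H_2(X;\Z)/\mathrm{torsion}$ is unimodular; by hypothesis it is negative definite, of rank $n := b_2(X)$. Since $H^1(Y;\Z) = H^2(Y;\Z) = 0$, every \Spinc--structure on $X$ restricts to the unique structure $\t$ on $Y$, the self-pairing $c_1(\s)^2 \in \Q$ is well defined, and as $\s$ ranges over $\mathrm{Spin}^c(X)$ the class $c_1(\s)$ ranges over precisely the characteristic vectors of $Q_X$. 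Thus it suffices to produce a characteristic vector $c$ of $Q_X$ with $Q_X(c,c) \geq -n$.

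The one genuinely non-formal ingredient is then the classical lattice-theoretic fact that a negative definite unimodular lattice of rank $n$ admits a characteristic vector $c$ with $c\cdot c \geq -n$. If $Q_X$ is even one simply takes $c = 0$. If $Q_X$ is odd, one invokes Elkies' theorem on short characteristic vectors of unimodular lattices --- equivalently, a covering-radius/pigeonhole argument applied to the coset of characteristic vectors inside $H_2(X;\Z)/\mathrm{torsion}$. Choosing $\s$ with $c_1(\s)$ equal to such a $c$ then gives $c_1(\s)^2 + b_2(X) \geq 0 > 4 d(Y)$, contradicting Theorem~\ref{os thm}. I expect this last lattice step to be the only real content --- the ``main obstacle'' --- while the rest is bookkeeping with the long exact sequence and with the correspondence between \Spinc--structures and characteristic vectors; since the lattice fact is standard, the argument is short.
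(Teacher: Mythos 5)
The paper states this corollary without proof, citing \cite{os1}; it is presented as an application of Theorem~\ref{os thm}, so there is no in-paper argument to compare against. Your reconstruction is correct and, to my knowledge, follows the same route as the proof in \cite{os1}: the boundary being an integer homology sphere makes $Q_X$ unimodular; first Chern classes of Spin$^c$--structures on $X$ realize every characteristic vector of $Q_X$ modulo torsion (and all restrict to the unique structure on $Y$); and Elkies' theorem supplies a characteristic vector $c$ with $c\cdot c \geq -b_2(X)$, so the corresponding $\s$ satisfies $c_1(\s)^2 + b_2(X) \geq 0 > 4d(Y)$, contradicting Theorem~\ref{os thm}. The one place I would push back is the parenthetical claim that Elkies' theorem is ``equivalently, a covering-radius/pigeonhole argument'': the existence of a characteristic vector of square at least $-n$ in a negative definite unimodular lattice of rank $n$ is a genuine theorem, proved by Elkies via theta-series and modular-form techniques, and there is no known elementary counting argument that replaces it. Since you do correctly cite Elkies as the essential non-formal input and the remaining steps are the standard bookkeeping, this is a presentational slip rather than a gap, and the proof stands.
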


Note that $\Sigma (K)$ is a rational homology sphere, so the $d$--invariant is defined for $\Sigma (K)$.  In \cite{manolescu-owens}, Manolescu and Owens introduce a knot invariant, $\delta$, defined by
$\delta (K) := 2 d(\Sigma (K), \t_0)$, where $\t_0$ is the Spin$^\text{c}$--structure induced by the unique Spin structure on $\Sigma (K)$.  They showed that $\delta$ is a concordance invariant which is additive under forming connected sums of knots and with the property that $\delta (-K) = - \delta (K)$.

 Rephrasing Corollary \ref{OSS d-inv} for the problem at hand:
\begin{cor} \label{OS cor}
Let $K\subset S^3$ be a knot such that $\Sigma (K)$ is a homology 3--sphere and $F$ a surface such that $\partial F = K$.  Suppose $\Sigma (F)$ has a negative definite intersection form.  Then $\delta(K)\geq0$.
\end{cor}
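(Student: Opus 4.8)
The plan is to invoke Corollary~\ref{os cor} (equivalently Corollary~\ref{OSS d-inv}) with $X=\Sigma(F)$ and $Y=\Sigma(K)$; once the setup is in place the statement is essentially immediate. First I would record that the two-fold branched cover $\Sigma(F)\to B^4$ is well defined: since $F$ is connected with connected boundary the knot $K$, the complement satisfies $H_1(B^4\setminus F;\Z/2)\cong\Z/2$, so there is a unique connected double cover, and $\Sigma(F)$ is a smooth compact $4$-manifold --- the same one appearing in Theorem~\ref{sign(W(F))} and Lemma~\ref{massey lem}. Its boundary is the two-fold branched cover of $S^3=\partial B^4$ over $\partial F=K$, so $\partial\Sigma(F)=\Sigma(K)$. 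Since by hypothesis $\Sigma(K)$ is an integer homology $3$-sphere, it carries a unique Spin$^c$ structure, namely the $\t_0$ used to define $\delta$; in particular $d(\Sigma(K))=d(\Sigma(K),\t_0)$ and $\delta(K)=2\,d(\Sigma(K),\t_0)$.

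Next comes the heart of the argument. The hypothesis that $\Sigma(F)$ has negative definite intersection form says exactly that $\Sigma(F)$ is a smooth negative definite $4$-manifold with $\partial\Sigma(F)=\Sigma(K)$. The contrapositive of Corollary~\ref{os cor} asserts that an integer homology $3$-sphere which bounds such a $4$-manifold must have nonnegative $d$-invariant; applied to $\Sigma(K)$ this gives $d(\Sigma(K))\geq 0$, and hence $\delta(K)=2\,d(\Sigma(K),\t_0)\geq 0$, which is the claim.

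There is little obstruction beyond Corollary~\ref{os cor} itself (which in turn rests on Theorem~\ref{os thm} of Ozsv\'ath and Szab\'o). The one delicate point is orientations: one must check that $\partial\Sigma(F)=+\Sigma(K)$ with $\Sigma(F)$ negative definite, because reversing orientation would replace $d(\Sigma(K))$ by $-d(\Sigma(K))$ and flip the conclusion to $\delta(K)\leq 0$. This is pinned down by consistently using the Gordon--Litherland conventions of Theorem~\ref{sign(W(F))} together with the sign conventions for $\delta$ of Manolescu and Owens. One may also note that, as $F$ is nonorientable with a single boundary component, $b_1(F)\geq 1$, so $b_2(\Sigma(F))\geq 1$ by Lemma~\ref{massey lem} and the negative-definiteness hypothesis is genuinely nonvacuous --- though this observation is not actually needed.
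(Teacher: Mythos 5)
Your argument is exactly what the paper has in mind: the paper introduces this corollary with the phrase ``Rephrasing Corollary~\ref{OSS d-inv} for the problem at hand'' and gives no separate proof, so the intended content is precisely the direct application of the Ozsv\'ath--Szab\'o obstruction to $X=\Sigma(F)$, $Y=\Sigma(K)$ that you carry out. Your attention to the orientation convention (that $\partial\Sigma(F)$ is $\Sigma(K)$ with the orientation under which $\delta$ is defined, not $-\Sigma(K)$) is a genuine subtlety worth recording, and you handle it appropriately by tying it to the Gordon--Litherland and Manolescu--Owens conventions used consistently throughout the paper.
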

\noindent This yields the following bound on the nonorientable 4--genus for some classes of knots.
\begin{prop}\label{my bound}
Let $K\subset S^3$ be a knot.  If $K$ satisfies
\begin{itemize}
\item $\sigma(K) \leq 2\U_K(1)$,
\item  $\Sigma (K)$ is a homology 3--sphere, and
\item $\delta(K) < 0$,
\end{itemize}
then $$ \U_K(1)-\frac{\sigma (K)}{2} +1 \leq \gamma_4(K).$$
\end{prop}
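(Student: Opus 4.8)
The plan is to argue by contradiction, sharpening the Ozsv\'ath--Stipsicz--Szab\'o bound of Corollary \ref{OSS bound} by exactly one unit. First I would record what is already available: since $\sigma(K) \leq 2\U_K(1)$, the number $\U_K(1) - \sigma(K)/2$ is nonnegative, so Corollary \ref{OSS bound} reads $\U_K(1) - \sigma(K)/2 \leq \gamma_4(K)$. Hence if the proposition fails, the only possibility is $\gamma_4(K) = \U_K(1) - \sigma(K)/2$, and in particular this quantity is a positive integer. Fix a nonorientable surface $F\subset B^4$ with $\partial F = K$ realizing $h(F) = \gamma_4(K) = \U_K(1) - \sigma(K)/2 =: h$. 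The goal will be to show that $\Sigma(F)$ is negative definite, for then Corollary \ref{OS cor} forces $\delta(K)\geq 0$, contradicting the hypothesis $\delta(K) < 0$.

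The key observation is that this extremal surface is rigid: its normal Euler number is forced. Applying Corollary \ref{sigma wedge} gives $e(F)/2 \leq \sigma(K) + h = \sigma(K)/2 + \U_K(1)$, while applying Corollary \ref{OSS cor} gives $e(F)/2 \geq 2\U_K(1) - h = \sigma(K)/2 + \U_K(1)$. These two bounds coincide, so $e(F) = 2\U_K(1) + \sigma(K)$. (The other halves of the inequalities in Corollaries \ref{sigma wedge} and \ref{OSS cor} are then automatically satisfied: both reduce to $\sigma(K) \leq 2\U_K(1)$.)

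Next I would feed this value into the Gordon--Litherland formula, Theorem \ref{sign(W(F))}: $\mathrm{sign}(\Sigma(F)) = \sigma(K) - \tfrac{1}{2}e(F) = \tfrac{1}{2}\sigma(K) - \U_K(1) = -h$. By Massey's Lemma \ref{massey lem}, $b_2(\Sigma(F)) = b_1(F) = h$. Therefore $\mathrm{sign}(\Sigma(F)) = -b_2(\Sigma(F))$, which means the intersection form of $\Sigma(F)$ is negative definite. Since $\partial \Sigma(F) = \Sigma(K)$ is a homology $3$--sphere by hypothesis, Corollary \ref{OS cor} applies and yields $\delta(K)\geq 0$, contradicting $\delta(K) < 0$. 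Thus no surface realizes $h(F) = \U_K(1) - \sigma(K)/2$, and combining with Corollary \ref{OSS bound} gives $\U_K(1) - \sigma(K)/2 + 1 \leq \gamma_4(K)$.

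I do not anticipate a genuine obstacle here; the only piece of real content is the rigidity step, namely that an extremal surface for the $\U$--$\sigma$ bound must have normal Euler number exactly $2\U_K(1) + \sigma(K)$. Once that is in place, Theorem \ref{sign(W(F))} and Lemma \ref{massey lem} upgrade ``extremal'' to ``$\Sigma(F)$ negative definite'' mechanically, and Corollary \ref{OS cor} closes the argument; everything else is bookkeeping with the already-stated corollaries and the hypothesis $\sigma(K)\leq 2\U_K(1)$.
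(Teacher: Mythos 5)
Your proof is correct and follows essentially the same route as the paper: both hinge on showing that an extremal surface forces $\Sigma(F)$ to be negative definite, then invoking Corollary \ref{OS cor} to get a contradiction with $\delta(K) < 0$. The only cosmetic difference is the packaging: you argue directly that a hypothetical surface realizing $h(F) = \U_K(1)-\sigma(K)/2$ has its normal Euler number pinned to $e(F) = 2\U_K(1)+\sigma(K)$ by squeezing the two inequalities of Corollaries \ref{sigma wedge} and \ref{OSS cor}, whereas the paper observes that the minimum point of the wedge $R_1\cap R_2$ lies on the line $h = e/2 - \sigma(K)$ and rules out all points on that line at once (which also yields slightly more geography information). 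Your ``rigidity'' framing is a clean and equivalent way to isolate the single point that matters for the $\gamma_4$ bound.
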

\begin{proof}
Let $K\subset S^3$ be a knot which satisfies the assumptions of the proposition.  Combining Corollaries \ref{sigma wedge} and \ref{OSS cor} yields a wedge of pairs in the $(e,h)$ graph for $K$ corresponding to nonorientable surfaces in $B^4$ which may be realizable with boundary $K$.   Since $\sigma(K)\leq 2\U_K(1)$, this wedge includes a half-line of pairs which satisfy 
\begin{equation}\label{line} -\sigma(K)+\frac{e}{2} = h. \end{equation} 
 On the $(e,h)$ graph for $K$, these points are along the rightmost side of the wedge.  See Figure \ref{dThmSchematic} for a schematic picture.
\begin{figure}[ht]
  \centering
  \captionsetup{justification = centering}
  \includegraphics[width = 0.4\textwidth]{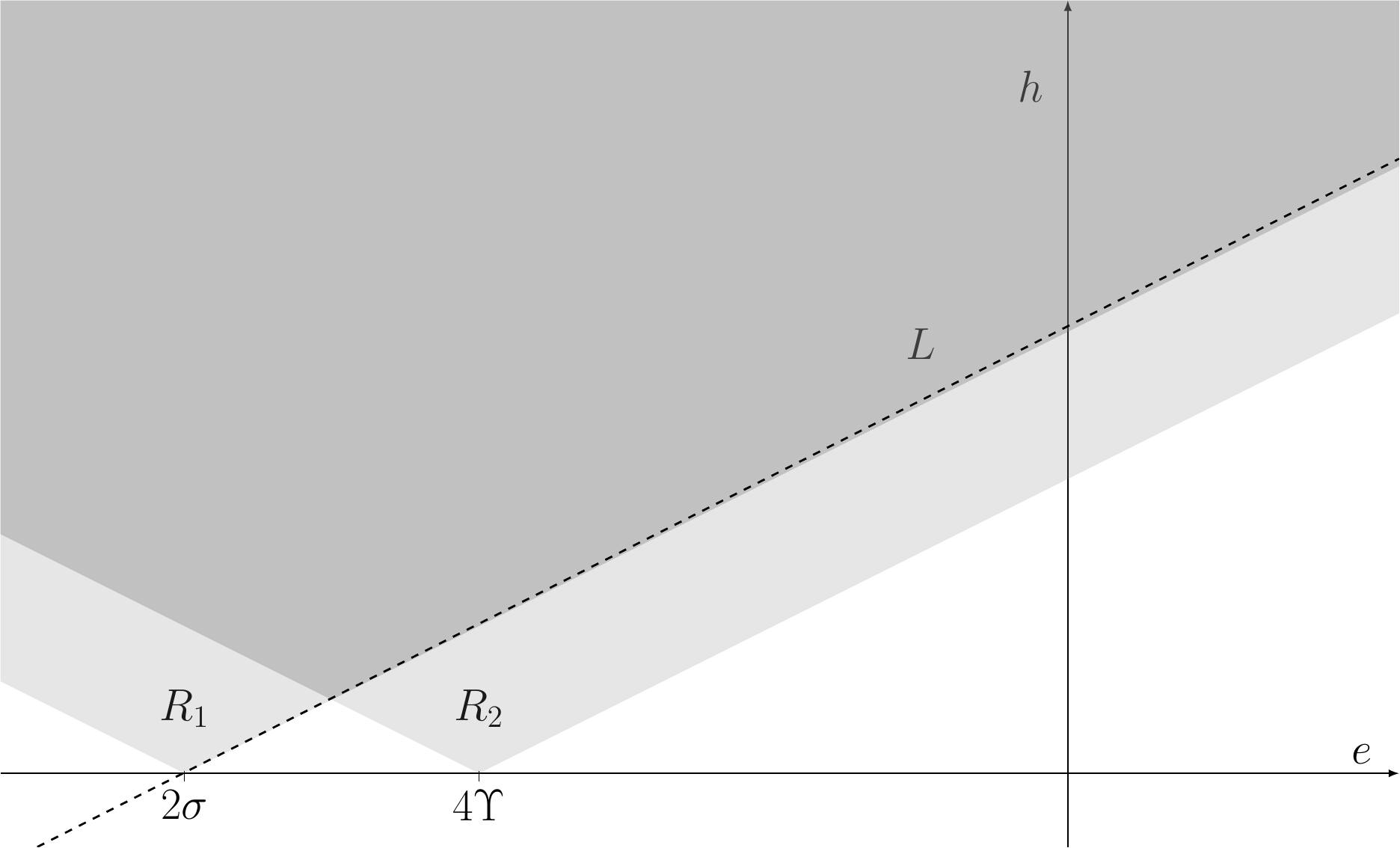}
  \caption{Schematic picture of the $(e,h)$--graph in the situation of Proposition \ref{my bound}.  The regions $R_1$ and $R_2$ are those arising from Corollaries \ref{sigma wedge} and \ref{OSS cor}, respectively. The line $L$ is that described in Equation \eqref{line}.}
  \label{dThmSchematic}
\end{figure}

Suppose that one of the points lying along line (\ref{line}) is realizable by some nonorientable surface $F\subset B^4$ with $\partial F = K$. Theorem \ref{sign(W(F))} implies that $h(F) = -\text{sign}(\Sigma(F))$.  Applying Lemma \ref{massey lem}, we have that
$$-\text{sign}(\Sigma(F)) = h(F) = b_1(F) = b_2(\Sigma(F)).$$  Thus $\Sigma (F)$ has a negative definite intersection form.  Since $\Sigma (K)$ is a homology 3--sphere, we can invoke Corollary \ref{OS cor}.  So it must be that $\delta(K) \geq 0$, a contradiction to our assumption that $\delta (K)$ is negative.  Thus no point along the line (\ref{line}) can be realized.  Note that the point $(2\U_K(1) + \sigma(K),\U_K(1)-\frac{\sigma(K)}{2})$ was the minimum point for the original wedge and it satisfies Equation \ref{line}.  Thus the lower bound in Corollary \ref{OSS bound} can be increased by 1, as desired.
\end{proof}

We now seek to find examples of such knots. Given a torus knot $T(p,q)$, Milnor \cite{milnor2} showed that its two-fold branched cover $\Sigma (T(p,q))$ is the Brieskorn manifold $\Sigma (2,p,q)$.  If $p$ and $q$ are relatively prime and odd, then $\Sigma (2,p,q)$ is an integer homology sphere and so has a unique Spin$^\text{c}$--structure.  We will omit the Spin$^\text{c}$--structure in the notation for the $d$--invariant. In \cite{os1}, Ozsv\'{a}th and Szab\'{o} computed $d(-\Sigma (2, 3,6n\pm  1))$, and, in \cite{tweedy}, Tweedy computed $d(-\Sigma (2, 5,n))$ for $(2,5,n)$ relatively prime and $d(-\Sigma (2, 7,n))$ for
$(2, 7,n)$ relatively prime. In \cite{nemethi}, Nemethi gave an algorithm for computing $d(\Sigma(p,q,r))$ for $p$, $q$, and $r$ relatively prime.  See also \cite{borodzik-nemethi, can-karakurt}.  

Thus we can compute $\delta(T(p,q))$ with $p$ and $q$ odd. Since the connected sum of two integer homology spheres is again an integer homology sphere, and since all of the relevant invariants are additive under forming connected sums, we will consider connected sums of torus knots and construct an infinite family of knots satisfying the conditions of Proposition \ref{my bound}.  In this way we can rule out infinitely many $(e,h)$--pairs which were previously unknown for the given knot.

\begin{thm}\label{connected sum d-inv} 
Let $c \geq 1$ and $K= cT (5,9)\#-(c +1)T (5,13)$ be the connected sum of $c$ copies of $T (5,9)$ and $(c +1)$ copies of $-T (5,13)$. Then $\Upsilon_K(1) - \frac{\sigma (K)}{2} = c -1$ and $\delta(K)=-4$.
\end{thm}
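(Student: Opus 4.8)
\noindent\emph{Strategy.} The plan is to reduce the statement to the values of $\Upsilon(1)$, $\sigma$, and $\delta$ on the two torus knots $T(5,9)$ and $T(5,13)$, and then reassemble by additivity. The key point is that $\Phi(J):=\Upsilon_J(1)-\tfrac12\sigma(J)$ is additive under connected sum and reverses sign under mirroring (immediate from the corresponding properties of $\Upsilon_{(\cdot)}(1)$ and $\sigma(\cdot)$), and that by Manolescu--Owens \cite{manolescu-owens} the invariant $\delta$ enjoys the same two properties. Hence for $K=cT(5,9)\#-(c+1)T(5,13)$,
\[
\Phi(K)=c\,\Phi(T(5,9))-(c+1)\,\Phi(T(5,13)),\qquad \delta(K)=c\,\delta(T(5,9))-(c+1)\,\delta(T(5,13)).
\]
So it suffices to prove $\Phi(T(5,9))=2$, $\Phi(T(5,13))=1$, and $\delta(T(5,9))=\delta(T(5,13))=4$: the two expressions then become $2c-(c+1)=c-1$ and $4c-4(c+1)=-4$, which is the assertion.

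\smallskip

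\noindent\emph{Signature and Upsilon.} First I would record $\sigma(T(5,9))=-24$ and $\sigma(T(5,13))=-32$, obtained either from the recursion of Gordon--Litherland--Murasugi \cite{gordon-litherland-murasugi} (for odd $p$, raising the second index by $2p$ costs $p^2-1$; e.g.\ $\sigma(T(5,13))=\sigma(T(5,3))-24=\sigma(T(3,5))-24=-32$) or directly from the Brieskorn--Hirzebruch lattice-point count for $\sigma(T(p,q))$. For Upsilon, apply the Feller--Krcatovich recursion (Theorem~\ref{upsilon recursive}), using $T(5,4)=T(4,5)$ and $T(5,3)=T(3,5)$:
\[
\Upsilon_{T(5,9)}(t)=\Upsilon_{T(4,5)}(t)+\Upsilon_{T(5,6)}(t),\qquad \Upsilon_{T(5,13)}(t)=\Upsilon_{T(3,5)}(t)+2\,\Upsilon_{T(5,6)}(t).
\]
Feeding in the base values $\Upsilon_{T(2,3)}(1)=-1$, $\Upsilon_{T(3,4)}(1)=-2$, $\Upsilon_{T(4,5)}(1)=-4$, $\Upsilon_{T(5,6)}(1)=-6$ (computable from the staircase complexes of these $L$--space knots, cf.\ \cite{feller-krcatovich}), together with $\Upsilon_{T(3,5)}(1)=\Upsilon_{T(2,3)}(1)+\Upsilon_{T(3,4)}(1)=-3$ (exactly as in the proof of Lemma~\ref{upsilon}), gives $\Upsilon_{T(5,9)}(1)=-10$ and $\Upsilon_{T(5,13)}(1)=-15$. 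Hence $\Phi(T(5,9))=-10-(-12)=2$ and $\Phi(T(5,13))=-15-(-16)=1$, so $\Upsilon_K(1)-\tfrac12\sigma(K)=c-1$.

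\smallskip

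\noindent\emph{The $d$--invariants.} By Milnor \cite{milnor2}, $\Sigma(T(p,q))=\Sigma(2,p,q)$, which is an integer homology sphere whenever $2,p,q$ are pairwise coprime; since $\{2,5,9\}$ and $\{2,5,13\}$ are each pairwise coprime, this applies, and $\delta(T(5,9))=2\,d(\Sigma(2,5,9))=-2\,d(-\Sigma(2,5,9))$, and likewise for $T(5,13)$. I would then invoke Tweedy's computation of $d(-\Sigma(2,5,n))$ for $n$ coprime to $10$ \cite{tweedy} (alternatively Nemethi's plumbing algorithm \cite{nemethi}), specialise to $n=9$ and $n=13$, and read off $d(-\Sigma(2,5,9))=d(-\Sigma(2,5,13))=-2$, i.e.\ $\delta(T(5,9))=\delta(T(5,13))=4$. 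Combining with the previous paragraph, $\delta(K)=4c-4(c+1)=-4$. (As a byproduct, $\Sigma(K)$, being a connected sum of the integer homology spheres $\pm\Sigma(2,5,9)$ and $\mp\Sigma(2,5,13)$, is an integer homology sphere, and $\sigma(K)\le 2\Upsilon_K(1)$ and $\delta(K)<0$, so $K$ meets the hypotheses of Proposition~\ref{my bound} and the bound on $\gamma_4$ improves from $c-1$ to $c$.)

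\smallskip

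\noindent\emph{Main obstacle.} The signature and Upsilon steps are mechanical: finitely many applications of established recursions to small torus knots. The genuine work — and the one real pitfall — is the $d$--invariant computation: applying Tweedy's (or Nemethi's) formula in the correct residue class of $n$, and carefully tracking the orientation conventions relating the branched cover $\Sigma(T(p,q))$, the Brieskorn sphere $\Sigma(2,p,q)$ as those references orient it, and the sign in $\delta(J)=2d(\Sigma(J),\t_0)$, so that the answer comes out as $\delta=+4$ (rather than $-4$, which would collapse the argument). The correct sign is pinned down by the requirement that $\delta(K)=-4<0$, which is both the claim and precisely what makes Proposition~\ref{my bound} applicable.
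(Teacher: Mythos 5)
Your proposal is correct and follows essentially the same route as the paper: compute $\sigma/2$, $\Upsilon(1)$, and $\delta$ for the two summands $T(5,9)$ and $T(5,13)$ (the paper records exactly the values you obtain: $-12$, $-16$, $-10$, $-15$, and $\delta=4$ for both), then combine via additivity and sign-reversal under mirroring. The only difference is expository --- you spell out the Feller--Krcatovich and Gordon--Litherland--Murasugi recursions and Tweedy's $d$--invariant tables that supply those base values, and bundle the first two invariants into $\Phi$, whereas the paper simply states the numbers; the computation and conclusion are identical.
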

\begin{proof}
First, we compute that
$$\frac{\sigma(T(5,9))}{2}=-12 \text{ and } \frac{\sigma (T(5,13))}{2}=-16,$$
$$\Upsilon_{T(5,9)}(1)=-10 \text{ and } \Upsilon_{T(5,13)}(1)=-15,$$
$$\delta(T(5,9))=4 \text{ and } \delta(T(5,13))=4.$$
Then,
$$\frac{\sigma (K)}{2} = -12c+16(c+1)=4c+16,$$
$$\Upsilon_K(1)=-10c+15(c+1)=5c+15,$$
$$\delta(K)=4c-4(c+1)=-4.$$
Thus
$$\Upsilon_K(1)-\frac{\sigma (K)}{2} = 5c+15-(4c+16)=c-1.$$
\end{proof}
\begin{figure}[ht]
  \centering
  \captionsetup{justification = centering}
  \includegraphics[width = 0.4\textwidth]{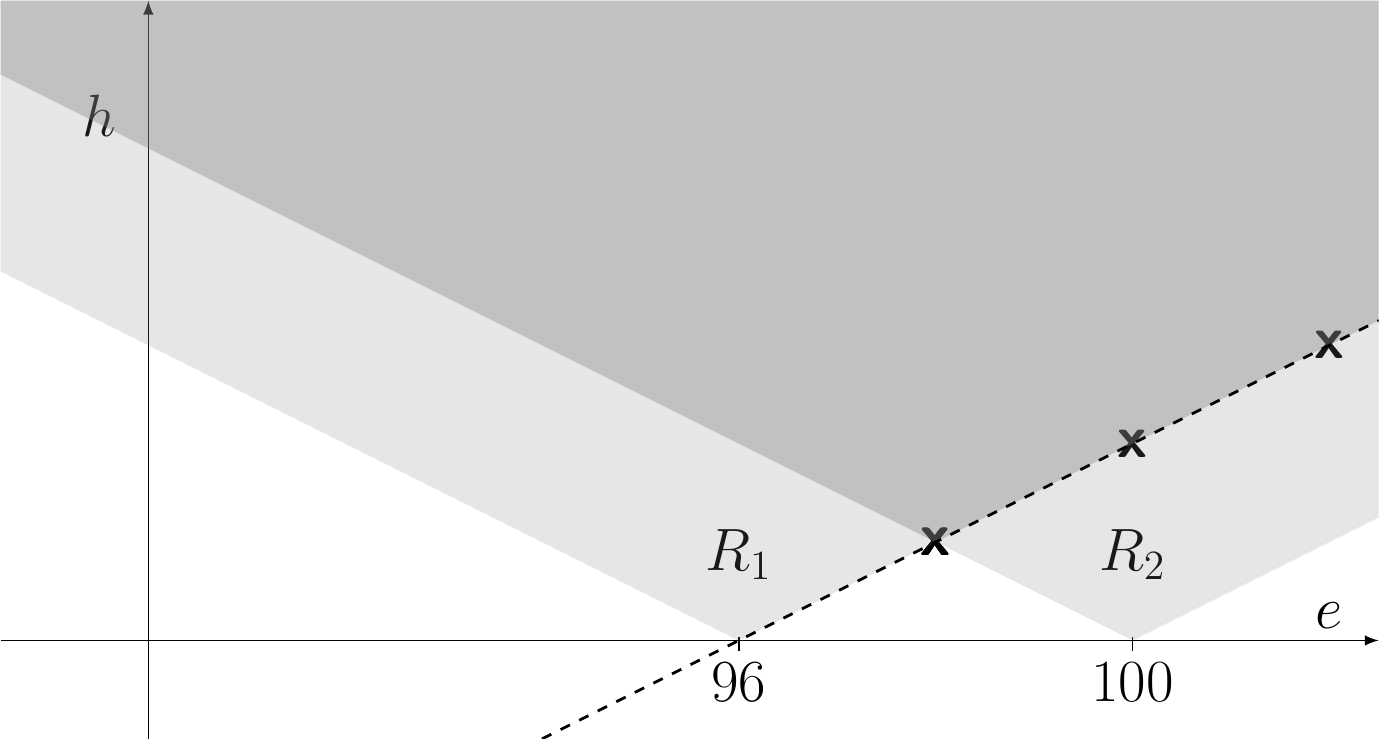}
  \caption{Schematic picture of the $(e,h)$--graph for the knot $2T(5,9)\#-3T(5,13)$.  The regions $R_1$ and $R_2$ are those arising from Corollaries \ref{sigma wedge} and \ref{OSS cor}, respectively. Points marked with an $\sf{x}$ have been ruled out by Proposition \ref{my bound}, increasing the lower bound on $\gamma_4$ from $1$ to $2$.}
  \label{T(5,9)-T(5,13)}
\end{figure}
\begin{cor}
 Let $c \geq1$ and $K= cT (5,9)\#-(c +1)T (5,13)$ be the connected sum of $c$ copies of
$T (5,9)$ and $(c +1)$ copies of $-T (5,13)$. Then $c \leq \gamma_4(K) \leq 3c +1$.
\label{connected sum h}
\end{cor}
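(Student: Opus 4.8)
The plan is to prove the two inequalities separately. The lower bound $c\le\gamma_4(K)$ will come directly from Proposition \ref{my bound} applied to $K$, fed by the computations recorded in Theorem \ref{connected sum d-inv}; the upper bound $\gamma_4(K)\le 3c+1$ will come from subadditivity of the nonorientable $4$--genus under connected sum, together with sharp upper bounds on $\gamma_4$ of the two torus-knot summands.

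For the lower bound, I would verify the three hypotheses of Proposition \ref{my bound} for $K=cT(5,9)\#-(c+1)T(5,13)$. First, Theorem \ref{connected sum d-inv} gives $\U_K(1)-\tfrac{\sigma(K)}{2}=c-1\ge 0$ whenever $c\ge 1$, which is exactly the condition $\sigma(K)\le 2\U_K(1)$. Second, by Milnor's identification of two-fold branched covers of torus knots with Brieskorn manifolds (recalled above), $\Sigma(K)$ is the connected sum of $c$ copies of $\Sigma(2,5,9)$ and $c+1$ copies of $-\Sigma(2,5,13)$; since the triples $(2,5,9)$ and $(2,5,13)$ are each pairwise coprime, both of these Brieskorn manifolds are integer homology $3$--spheres, and a connected sum of integer homology $3$--spheres is again one, so $\Sigma(K)$ is an integer homology sphere. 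Third, $\delta(K)=-4<0$ by Theorem \ref{connected sum d-inv}. Proposition \ref{my bound} then yields
\[
\gamma_4(K)\ \ge\ \U_K(1)-\frac{\sigma(K)}{2}+1\ =\ (c-1)+1\ =\ c .
\]

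For the upper bound, I would use that $\gamma_4$ is subadditive under connected sum: a boundary connected sum of nonorientable spanning surfaces $F_1,F_2$ for knots $K_1,K_2$ is a nonorientable spanning surface for $K_1\#K_2$ with first Betti number $\beta_1(F_1)+\beta_1(F_2)$. Since $\gamma_4$ is moreover unchanged under taking mirror images and reversing orientation, this gives
\[
\gamma_4(K)\ \le\ c\,\gamma_4(T(5,9))+(c+1)\,\gamma_4(T(5,13)) .
\]
It then suffices to show $\gamma_4(T(5,9))\le 2$ and $\gamma_4(T(5,13))\le 1$; combined with the matching lower bounds supplied by Corollary \ref{OSS bound} (namely $2$ and $1$, respectively), these are in fact equalities. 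Granting them, $\gamma_4(K)\le 2c+(c+1)=3c+1$, which completes the proof.

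The step I expect to be the genuine obstacle is exhibiting the two surfaces needed for the previous paragraph: a nonorientable surface in $B^4$ bounding $T(5,9)$ with first Betti number $2$, and a M\"obius band in $B^4$ bounding $T(5,13)$. I would construct these by an explicit sequence of unoriented band moves in the spirit of Figure \ref{3kbandmove} (for instance by reducing each of $T(5,13)$ and $T(5,9)$ to a simpler torus knot or to the unknot and keeping track of the first Betti numbers along the way); alternatively, these upper bounds can be read off from the computations of the nonorientable $4$--genus of torus knots due to Jabuka and Van Cott \cite{jabuka-van-cott}. By contrast, the bookkeeping with the additive invariants (the knot signature $\sigma$, the Upsilon invariant $\U$, and the invariant $\delta$) and the verification that $\Sigma(K)$ is an integer homology sphere are routine.
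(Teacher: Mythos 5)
Your proof is correct and takes essentially the same route as the paper: the lower bound comes from Proposition \ref{my bound} (with the hypotheses supplied by Theorem \ref{connected sum d-inv} and the Brieskorn-sphere observation), and the upper bound comes from subadditivity of $\gamma_4$ under connected sum together with the band-move bounds $\gamma_4(T(5,9))\le 2$ and $\gamma_4(T(5,13))\le 1$. The paper states these ingredients more tersely but they are the same ones; your additional verification that $\Sigma(K)$ is an integer homology sphere and that $\sigma(K)\le 2\U_K(1)$ simply makes explicit what the paper leaves to the reader.
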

\begin{proof}
The leftmost inequality is a consequence of Proposition \ref{my bound}.  The upper bound follows from the fact that $\gamma_4(T (5, 9)) \leq 2$ and $\gamma_4(T (5,13)) = 1$.  Performing a band move on each knot reveals upper bounds of 2 and 1 respectively.\end{proof}


\section{Further remarks}
We have seen that for some torus knots, for example $T(2,3)$ and $T(3,4)$, the geography problem is completely solved.  However, for many small knots there are still several unknown values.

In cases where the two-fold branched cover of $S^3$ branched over the knot is a manifold which is well understood, it is possible that more can be said about the geography problem, such as in the case of two-bridge knots. 

  While much is known about rational homology spheres bounding definite manifolds (see, for instance, \cite{choe-park, owens-strle1, owens-strle2, os1}), the cases of semi-definite and indefinite manifolds are more challenging.
\begin{exmp} Consider the figure-eight knot, $4_1$.  Because $4_1$ is an amphicheiral knot, $\sigma(4_1) = \U_{4_1}(1) = 0$.  Viro \cite{viro} showed that $4_1$ does not bound a M{\"o}bius band in the 4--ball.  In Figure \ref{2kand41}(b), we see that $4_1$ does bound a punctured Klein bottle $F$ and computation reveals that $e(F)=4$.  Taking the mirror image, we see that $4_1$ also bounds a punctured Klein bottle with $e = -4$.  This results in an $(e,h)$--graph with exactly one unknown point: $(0,2)$.
\begin{question}
Does the figure-eight knot bound a punctured Klein bottle $F$ with $e(F) = 0$?
\end{question}
\noindent Note that Theorem \ref{sign(W(F))} implies that, for such an $F$,  $\text{sign}(\Sigma(F)) = 0$ and so $\Sigma(F)$ is not a definite manifold.
\end{exmp}


\bibliography{mybiblio}
\bibliographystyle{abbrv} 

\end{document}